\documentclass[a4paper,review,1p]{elsarticle}

\usepackage[utf8]{inputenc}
\usepackage[english]{babel}
\usepackage{bbm} %
\usepackage[tbtags]{mathtools}
\usepackage{amssymb,amsmath,amsthm} %
\usepackage{color} %
\usepackage{graphicx} %
\usepackage[]{hyperref} %
\usepackage{cleveref}
\usepackage{booktabs} %
\usepackage{dsfont} %
\usepackage{tikz} %
\usepackage{a4wide}

\usepackage{siunitx}

\usepackage{pgfplots} 
\pgfplotsset{compat=newest}

\usepackage{subcaption}
\usepgfplotslibrary{fillbetween}
\usepgfplotslibrary{colorbrewer}

\newcommand{\bfp}{\mathbf{p}}

\newcommand{\bfP}{\mathbf{P}}

\newcommand{\bbR}{\mathbb{R}}

\renewcommand{\div}{\operatorname{div}}

\newcommand{\dif}{ \ \mathrm{d}}

\DeclareMathOperator{\diag}{diag}
\DeclareMathOperator{\E}{\mathbb{E}}
\DeclareMathOperator{\Var}{Var}
\DeclareMathOperator{\Cov}{Cov}

\newcommand{\LandauO}{\mathcal{O}}

\newcommand{\nKL}{M}

\newcommand{\vt}{t}

\renewcommand{\forall}{\text{for all }}

\newcommand{\dd}{\mathrm{d}}

\newcommand{\ddt}{\frac{\dd}{\dd t}}

\DeclareMathOperator{\tr}{tr}
\newcommand{\D}{\mathrm{d}}

\newcommand{\R}{\ensuremath{\mathbb{R}}}

\newcommand{\Hzerocurl}{\ensuremath{H_0\left(\textrm{curl};\domain\right)}}
\newcommand{\HzerocurlDt}{\ensuremath{H_0\left(\textrm{curl};\domain_t\right)}}
\newcommand{\HzerocurlDtomega}{\ensuremath{H_0\left(\textrm{curl};\domain_t(\omega)\right)}}
\newcommand{\HzerocurlDzero}{\ensuremath{H_0\left(\textrm{curl};\domain_0\right)}}

\newcommand{\domain}{\mathrm{D}}
\newcommand{\boundary}{\partial \domain}

\renewcommand{\vec}[1]{\ensuremath{\boldsymbol{#1}}}
\newcommand{\curl}[1]{\ensuremath{\nabla \times #1}}
\renewcommand{\div}[1]{\ensuremath{\nabla \cdot #1}}

\newcommand{\fembasis}{\ensuremath{N}}

\newcommand{\stiff}{\ensuremath{\mathbf{K}}}
\newcommand{\mass}{\ensuremath{\mathbf{M}}}

\newcommand{\coeffstiff}{\mathbf{C}}
\newcommand{\coeffmass}{\mathbf{A}}
\newcommand{\coeff}{\mathbf{B}}

\newcommand{\freq}{f}

\newcommand{\maptil}{\ensuremath{{\mathbf{G}}}}

\newcommand{\mapnurbs}{\ensuremath{{\mathbf{g}_i}}}
\newcommand{\mapf}{\ensuremath{{\mathbf{f}}}}

\newcommand{\samplespace}{\Omega}
\newcommand{\eventspace}{\mathcal{F}}
\newcommand{\probmeasure}{\mathbb{P}}
\newcommand{\event}{\omega}

\newcommand{\rand}{z}

\newtheorem{theorem}{Theorem}
\newtheorem{remark}[theorem]{Remark}
\newtheorem{lemma}[theorem]{Lemma}

\begin{document}

\begin{frontmatter}
\title{Shape uncertainty quantification of Maxwell eigenvalues and -modes with application to TESLA cavities\tnoteref{t1}}
\tnotetext[t1]{The work of SSc and AZ is partially supported by the Graduate School CE within the Centre for Computational Engineering at TU Darmstadt. The work of JD and DE was partially funded by the Deutsche Forschungsgemeinschaft (DFG, German Research Foundation) – project number 501419255. JD and DE also thankfully acknowledge the support by the DFG under Germany’s Excellence Strategy – project number 390685813. We thank Jacopo Corno for providing us with the code and data to analyze the misaligned TESLA cavities. We also thank Christian Schmitt for his frequent support with running our code on the compute server. The authors gratefully acknowledge the granted access to the Bonna cluster hosted by the University of Bonn.}

\author[1]{Jürgen Dölz}
\ead{doelz@ins.uni-bonn.de}
\author[1]{David Ebert\corref{cor1}}
\ead{ebert@ins.uni-bonn.de}
\author[2]{Sebastian Schöps}
\ead{schoeps@temf.tu-darmstadt.de}
\author[2]{Anna Ziegler}
\ead{anna.ziegler@tu-darmstadt.de}

\cortext[cor1]{Corresponding author.}

\affiliation[1]{organization={Institute for Numerical Simulation, University of Bonn},
    addressline={Friedrich-Hirzebruch-Allee~7},
    postcode={53115},
    city={Bonn},
    country={Germany}}
\affiliation[2]{organization={Computational Electromagnetics Group, Technische~Universität~Darmstadt},
    addressline={Schloßgartenstr.~8},
    postcode={64289},
    city={Darmstadt},
    country={Germany}}

\begin{abstract}
We consider Maxwell eigenvalue problems on uncertain shapes with perfectly conducting TESLA cavities being the driving example. 
Due to the shape uncertainty the resulting eigenvalues and eigenmodes are also uncertain and it is well known that the eigenvalues may exhibit crossings or bifurcations under perturbation.
We discuss how the shape uncertainties can be modelled using the domain mapping approach and how the deformation mapping can be expressed as coefficients in Maxwell's equations. 
Using derivatives of these coefficients and derivatives of the eigenpairs, we follow a perturbation approach to compute approximations of mean and covariance of the eigenpairs.
For small perturbations these approximations are faster and more accurate than sampling or surrogate model strategies.
For the implementation we use an approach based on isogeometric analysis, which allows for straightforward modelling of the domain deformations and computation of the required derivatives.
Numerical experiments for a three-dimensional 9-cell TESLA cavity are presented.
\end{abstract}

\begin{keyword}
Maxwell eigenvalue problem \sep shape uncertainty \sep uncertainty quantification \sep Fr\'echet derivatives \sep local sensitivity analysis \sep perturbation approach \sep isogeometric analysis 
\end{keyword}

\end{frontmatter}

\section{Introduction} \label{sec:intro}

\subsection{Motivation}

Radio-frequency resonators are devices in which electromagnetic fields oscillate at gigahertz frequencies. They are used, for example, to accelerate particles, where the aim is to increase the magnitude of the excitation.
A specific example which will be the driving force of our article is the superconducting TESLA cavity, following the design by~\cite{Aune_2000aa}.
The TESLA cavity, designed for linear accelerators, is a 9-cell superconducting cavity, manufactured from niobium sheets.
It is cooled with superfluid helium to \SI{2}{\kelvin} in operation and the accelerating eigenmode resonates at \SI{1.3}{\giga \hertz}.
The standing wave structure has a total length of approximately \SI{1}{\meter} and consists of nine elliptic cells, i.e. is designed from circles and ellipses, with an equator diameter of approximately \SI{21}{\centi\meter}, see \cref{fig:tesla}.

\begin{figure}[htb]
    \centering
        \def\myw{1/12.15*\textwidth}
    \begin{tikzpicture}
        \node[inner sep=0pt, anchor = south west] (tesla) at (0,0)     {\includegraphics[width=\textwidth]{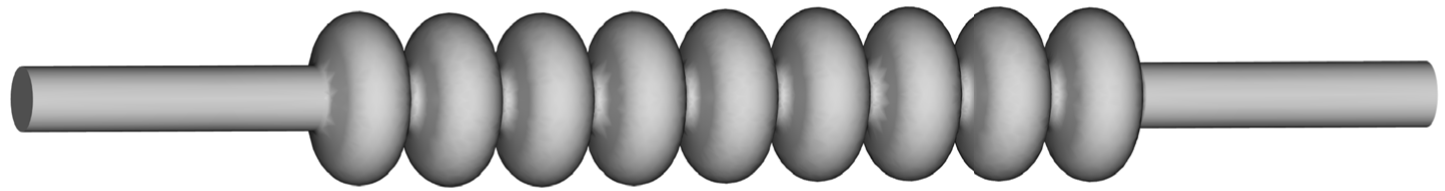}};
        
        \draw[black, -stealth] (7.0*\myw,-0.8*\myw) --  (10.5*\myw,0.5*\myw);
        \draw[black, -stealth] (5.5*\myw,-0.8*\myw) --  (1.5*\myw,0.5*\myw) node[pos = 0, anchor = west] {beampipes};
        
        \draw[black, -stealth] (3.5*\myw,2*\myw) -- (3.8*\myw, 1.5*\myw) node[pos = 0, anchor = south east] {equator};
        
        \draw[black, -stealth] (5.5*\myw,2*\myw) -- (5*\myw, 1*\myw) node[pos = 0, anchor = south west] {iris};
        
        \draw[black, dashed] (7.3*\myw,1.2*\myw) --  (7.3*\myw,2*\myw);
        \draw[black, dashed] (8.05*\myw,1.2*\myw) --  (8.05*\myw,2*\myw);
        
        \draw[black, stealth-stealth] (7.3*\myw,1.9*\myw) -- (8.05*\myw, 1.9*\myw) node[pos = 0.5, anchor = south] {cell};
        
    \end{tikzpicture}
    \caption{The 9-cell TESLA cavity with attached elongated beampipes.}
    \label{fig:tesla}
\end{figure}

During operation, a harmonic time-varying electric field is excited in the cavity whose field distribution and resonating frequency are governed by Maxwell’s equations.
Considering the source-free, time harmonic and lossless case, the magnetic field strength~$\vec{H}$ and the electric field strength~$\vec{E}$ are related by Faraday's law
\begin{equation*}
    \nabla \times \vec{E} = -j\omega\mu_0 \vec{H},
\end{equation*}
where~$\mu_0$ denotes the magnetic permeability of vacuum, and by Ampère's law
\begin{equation*}
    \nabla \times \vec{H} = j\omega\varepsilon_0 \vec{E},
\end{equation*}
with $\varepsilon_0$ indicating the electric permittivity in vacuum. In both equations $\omega$ denotes the unknown angular resonance frequency of the cavity.
The magnetic field strength $\vec{H}$ can be eliminated to obtain the \emph{Maxwell eigenvalue problem} in terms of the electric field strength
\begin{equation*}
    \curl(\curl\vec{E})=\lambda\vec{E}
\end{equation*}
with $\lambda=\omega^2\mu_0\varepsilon_0$ and complemented with the boundary conditions of a perfect electric conductor $\vec{n}\times\vec{E}=0$, with \vec{n} being the outward pointing normal vector. Thus, determining the eigenpair $(\lambda, \vec{E})$, i.e., solving the eigenvalue problem allows to determine the resonance frequencies and their modes.

Cavities are carefully designed to accelerate bunched particles in the longitudinal direction by transferring energy to the bunch, which travels at velocities close to the speed of light.
A precise synchronisation between the electric field and the particle bunch is necessary for the successful and efficient energy transmission and specific eigenmodes of the cavity need to be excited.
The electromagnetic field is characterised by the resonating frequency. 
For the accelerating mode, we also consider the field quality, indicating an even distribution of the field within the cavity.
Amongst other factors, the eigenfrequency and the field distribution are strongly coupled to the geometry of the domain and even small deviations and uncertainties can have a non-negligible effect on the field.
Thus, deformations influencing the distribution and quality of the accelerating mode significantly impact the final efficiency of the acceleration.
Since manufacturing inaccuracies or electromagnetic pressure on the domain wall in operation inevitably result in deviations from the design geometry and thus shift the resonating frequency and affect the performance, a careful investigation of sensitivity and a \emph{shape uncertainty quantification} of the eigenmodes and fields of the eigenvalue problem is necessary.

\subsection{Related Work}

\subsubsection{Uncertainty quantification for Eigenvalue problems}
The mathematical theory for uncertainty quantification of eigenvalue problems seems to be still a relatively little explored field. Early works mainly were developed in the fields of structural analysis and aerospace engineering, see, e.g., the review paper~\cite{AF2007} for investigations in structural dynamics. 
These works are mostly limited to eigenvalues of single multiplicity and rather small algebraic systems. 
To overcome the computational challenges of systems involving partial differential equations, \cite{AndreevSchwab} propose a sparse grid approach and \cite{GGK+2019a,GS2024,GS2024a,Ngu2022,CL2023} apply quasi-Monte Carlo methods. 
While these works are restricted to the case of a single, isolated eigenvalue which does not cross, the trajectories of several eigenvalues may generally show crossings or bifurcations when being perturbed \cite{Kat1995}. 
Rellich showed in \cite{Rel1969} that eigenpairs remain locally analytic even in this case if the perturbation depends on a single, real parameter and gave a characterization of the first (and higher) derivatives. 
This characterization relies on second derivatives to deal with the occurring rank-defects in the characterization and was later exploited in the algorithmic approach of Nelson and Dailey \cite{Dai1989,Nel1976}. Their algorithm was extended in~\cite{Jorkowski_2020aa} for higher derivatives of degenerate eigenpairs.
These derivatives were used in a tracking algorithm to identify eigenvalues where the crossings occur only in one parameter's dimension in a stochastic collocation scheme in \cite{GACS2019}. For eigenvalue problems depending on more than a single scalar parameter, Rellich showed in \cite{Rel1969} that the eigenpairs belonging to an eigenvalue of higher multiplicity are not Fréchet differentiable in general. 
However, \cite{GSH2023} and \cite{DE2024} showed that the eigenspaces remain locally analytic. In \cite{GSH2023}, this was employed in the uncertainty quantification of eigenspaces based on a stochastic collocation approach. 
In \cite{DE2024}, a linear representation of the first Fr\'echet derivative of eigenspaces without the need for second derivatives was derived and exploited for a perturbation-based uncertainty quantification approach.

However, to the best of our knowledge, the influence of shape uncertainties onto the eigenpairs has not been systematically investigated in the mathematical community. This is in contrast to shape uncertainties onto the solution of partial differential equations, where essentially two approaches have emerged. 
The first, the domain mapping approaches, see \cite{CNT16, HPS16, HSSS2018, XT06}, transfer the uncertainty in the domain onto a partial differential equation with random coefficients on a fixed reference domain and are able to deal with large deformations. 
For the computation of statistical quantities of interest such as the mean or the variance of the solutions, high-dimensional quadrature rules such as Monte Carlo are usually employed. The second class of approaches are perturbation approaches, see \cite{BN2014, CS2013, Dol2020, HSS08, JS16}, which use local sensitivity analyses of solutions to partial differential equations to compute statistical quantities of interest. 
Generally speaking the domain mapping approach is best suited for large perturbations, whereas the perturbation or sensitivity-based approach is best suited for smaller perturbations.

\subsubsection{Uncertainty Quantification for Cavities}
Within the electromagnetics community, the effect of uncertainties in cavities on the field quality or resonating frequency have been studied in several works, such as~\cite{Corno_2020, georg_uncertainty_2019, Jacopo, Zadeh, Xiao, Schmidt}.
Deformations arising in cavities include deviations from the design parameters, non axis-symmetric deformations, such as bumps or kinks or mechanical deformations due to Lorentz forces~\cite{Jacopo}.
Furthermore, the welding causes a shrinkage at the welding point and a random misalignment of the cells with respect to the ideal cavity axis~\cite{Corno_2020, Corno_thesis}. Sensitivity analyses are carried out on cavities in order to state the impact of deviations from the design introduced in the production process.
For the TESLA cavity, details of the manufacturing procedure are given in~\cite{Aune_2000aa, Corno_2020}, describing the effects of the deep drawing of the half-cells from niobium sheets, the machining, welding and trimming.
Furthermore, the processes to control and balance misalignment due to welding and the necessity for cleaning, chemical treatments, grinding and final tuning are illustrated.
Additionally,~\cite{Aune_2000aa} state the requirements on the niobium sheets.
The works~\cite{Jacopo, Corno_2020} investigate an uncertainty quantification for the cavity design parameters.
Also in~\cite{Schmidt}, the authors assume deviations of $13$ uncertain design parameters of the TESLA mid-cell and compare a Monte Carlo simulation and uni-variate and multi-variate generalized Polynomial Chaos expansions to estimate some stochastic properties. 
In~\cite{Brackebusch_2012,Bra2016}, a series expansion from the eigenmodes of an unperturbed geometry based on matrix perturbation theory is proposed, in order to avoid computationally expensive parameter studies for cavity perturbations.
Investigating the impact of parameter uncertainties on the dipole mode frequencies and the external quality factor, the authors of~\cite{Xiao_2007} use a mesh distortion method.
A stochastic response surface model is proposed in \cite{Deryckere_2012}, using the derivatives of the system matrices with respect to geometric parameters.
In \cite{Zadeh}, a perturbation is applied to the system matrices of the eigenvalue problem and eigenpair derivatives at the unperturbed geometry are calculated. 
These are employed for an efficient calculation of the eigenvalues and eigenvectors of the perturbed geometry.

Aside from production inaccuracies, \cite{Corno_2016} investigates the effects due to electromagnetic radiation pressure and considers the frequency shift of the accelerating mode in the pillbox cavity and in the 1-cell TESLA cavity applying a linear elasticity problem on the cavity walls.
The authors of~\cite{georg_uncertainty_2019} extend uncertainty investigations due to manufacturing imperfections to the more realistic case of misalignment of the cells due to welding.
A truncated Karhunen–Loève expansion is performed on measurement data of perturbed geometries and a stochastic collocation method based on sparse grids at predefined collocation points is employed.
For this approach, an eigenvalue tracking procedure is proposed to ensure consistency of the solutions.

\subsection{Contributions}
As shape uncertainties in TESLA cavities are usually small, a perturbation-based uncertainty quantification approach can be expected to be beneficial \cite{Brackebusch_2011aa, Brackebusch_2012, Brackebusch_2013aa,Bra2016}. 
The purpose of this article is to derive such an approach for the Maxwell eigenvalue problem. 
The contributions are as follows:
\begin{enumerate}
\item We provide a characterization of the shape sensitivities of the eigenpairs for the Maxwell eigenvalue problem in terms of partial differential equations. In contrast to previous approaches based on matrix perturbation theory, this provides larger flexibility for the used solvers.
\item We extend the perturbation approach to uncertainty quantification for eigenvalue problems from \cite{DE2024} to the case of random domains on the example of the Maxwell eigenvalue problem.
\item We provide closed-form characterizations of the deformation coefficients arising from discretized domain deformations using the isogeometric framework from \cite{ziegler_computation_2022}.
\item We demonstrate the feasibility and effectiveness of our approach on TESLA cavities with deformation fields obtained from real world data.
\end{enumerate}

\subsection{Outline}
The paper is structured as follows. In \cref{sec:deterministicProblem}, we introduce the problem formulation and our method of modelling the geometry deformations.
We review the formulation of the domain mappings as well as the computation of their derivatives.
On this basis, in \cref{sec:UQ}, we present the stochastic model and the efficient computation of the covariances of the derivatives. 
In \cref{sec:disc}, we state the discretized version of the Maxwell eigenvalue problem, of its derivative and of the covariance equation.
We discretize the probability space using a truncated Karhunen-Loève style expansion in \cref{sec:computation} and specify our chosen discretization method, the isogeometric analysis.
We demonstrate our work in \cref{sec:num_examples}, where we perform an uncertainty quantification for the TESLA cavity. 
To this end, we assume a misalignment of the cavity cells with respect to the ideal axis.
Finally, we conclude our work in \cref{sec:conclusion}.

\section{Shape perturbations of Maxwell's eigenvalue problem} \label{sec:deterministicProblem}

\subsection{Maxwell's Eigenvalue Problem}
We consider the Maxwell eigenvalue problem on the bounded Lipschitz domain~$\domain\subset\bbR^3$ with Dirichlet boundary data in the source-free and homogeneous case with lossless materials, i.e. the problem to find
$({\lambda},\vec{E}) \in \R \times \Hzerocurl, \vec{E} \neq 0$
such that
\begin{subequations}
\begin{align} 
    \curl (\curl \vec{E} ) &= {\lambda} \vec{E} && \text{in } \domain , \\
    \div \vec{E} &= 0 && \text{in } \domain , \\
    \vec{E} \times \vec{n} &= 0 && \text{on } \boundary , \\
    \big(\vec{E}, \vec{E}\big)_\domain &= 1 ,
\end{align}\label{eq:evp}
\end{subequations}%
with $\vec{n}$ being the outward pointing normal vector and $(\cdot,\cdot)_\domain$ denoting the canonical inner product in $\big[L^2(\domain)\big]^3$.
There are countably infinitely many positive eigenvalues, which we sort in ascending order 
\begin{align*}
    0 < \lambda^{(1)} \le \lambda^{(2)} \le \ldots ,
\end{align*}
and the eigenmodes $\vec{E}$ of this eigenvalue problem are unique up to orientation and linear combination within the eigenspace, cf. \cite{boffi_2010}. 
The eigenmodes $\vec{E}$ are the amplitudes of electric field strength, which oscillates with the resonant frequency $\freq$ related to the eigenvalues $\lambda$ via
\begin{align} 
    \freq = \frac{\sqrt{\lambda}c_0}{2\pi},
\end{align}
where $c_0$ is the speed of light in vacuum.
\Cref{fig:E-modes} shows the first nine nonzero eigenvalues and the corresponding normalized magnitude of the electrical field strength of a 9-cell TESLA cavity. 
The accelerating mode is the ninth eigenmode, it is displayed in \cref{fig:E-mode9} for a tuned cavity \cite{Corno_2020}.
We note that the first nine eigenvalues each are of single multiplicity.

\begin{figure}
    \begin{subfigure}[c]{.8\textwidth}
    \centering
    \includegraphics[trim = 200 310 200 300, clip, width = .65\textwidth]{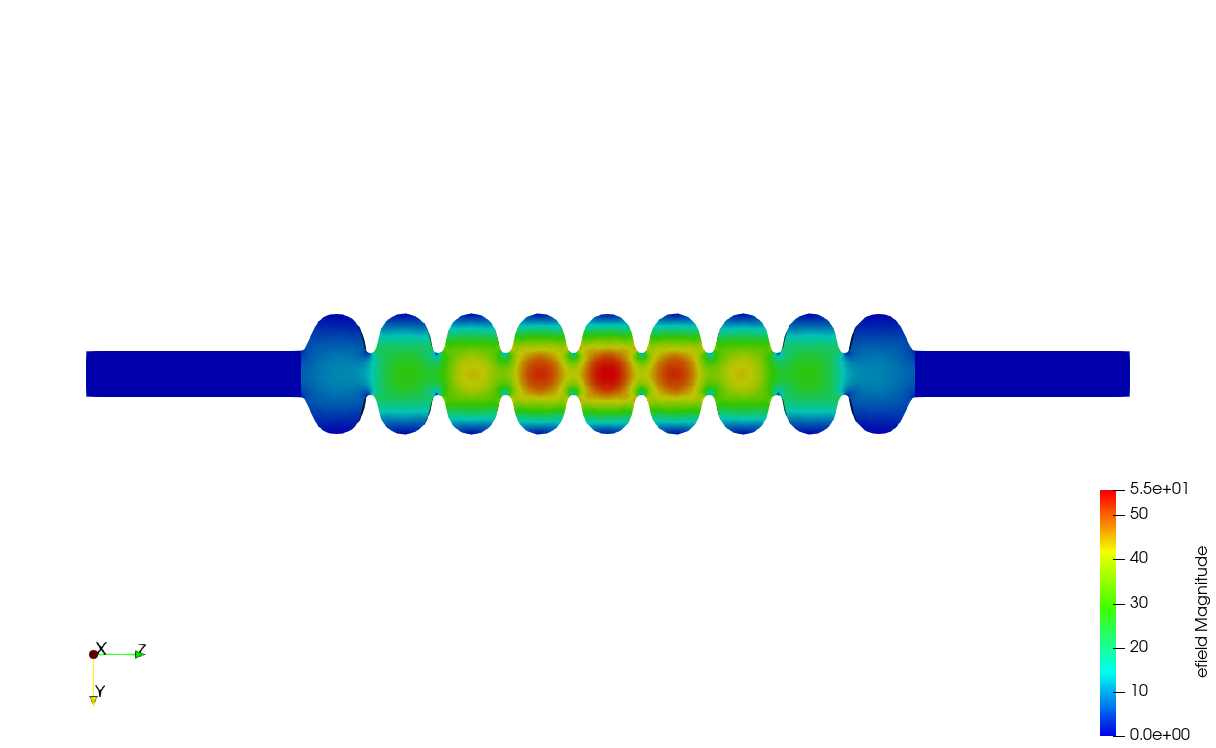}
    \caption{$f = \SI{1.2777}{\giga \hertz}$}
\end{subfigure} \vspace{0.1em}

\begin{subfigure}[c]{.8\textwidth}\centering
    \includegraphics[trim = 200 310 200 300, clip, width = .65\textwidth]{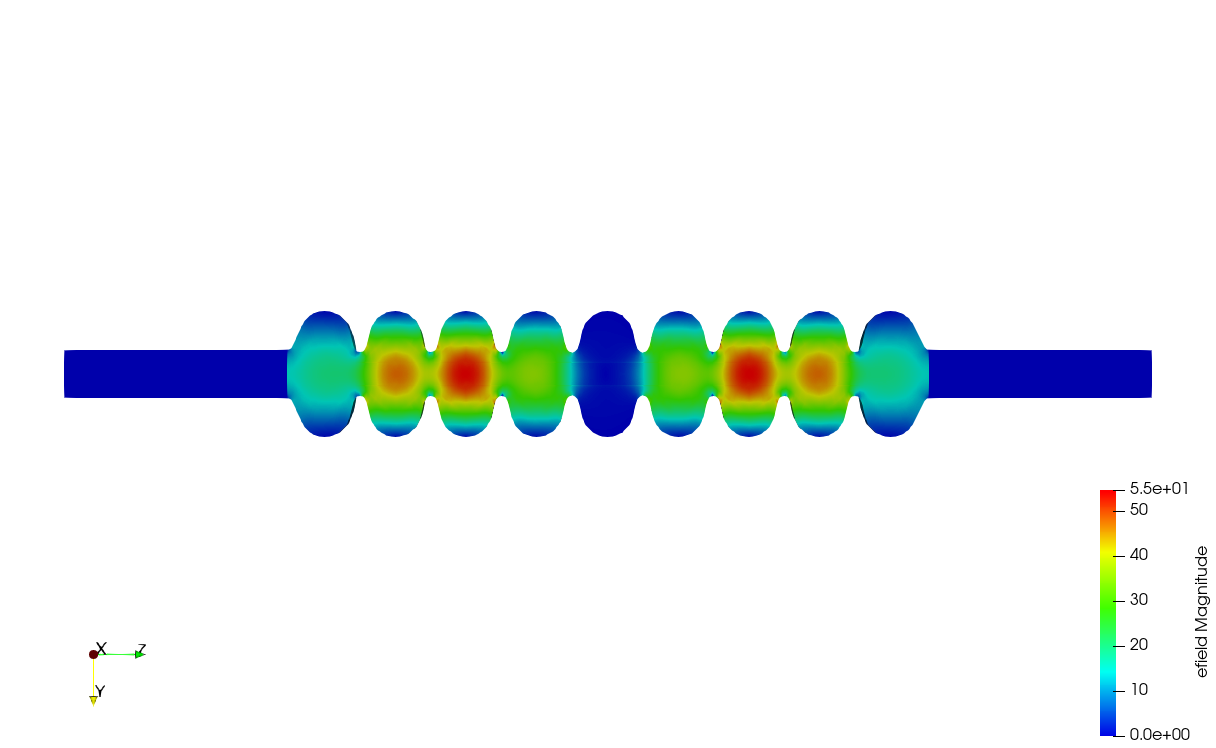}
    \caption{$f = \SI{1.2798}{\giga \hertz}$}
    \end{subfigure}\vspace{0.1em}

\begin{subfigure}[c]{.8\textwidth}\centering
    \includegraphics[trim = 200 310 200 300, clip, width = .65\textwidth]{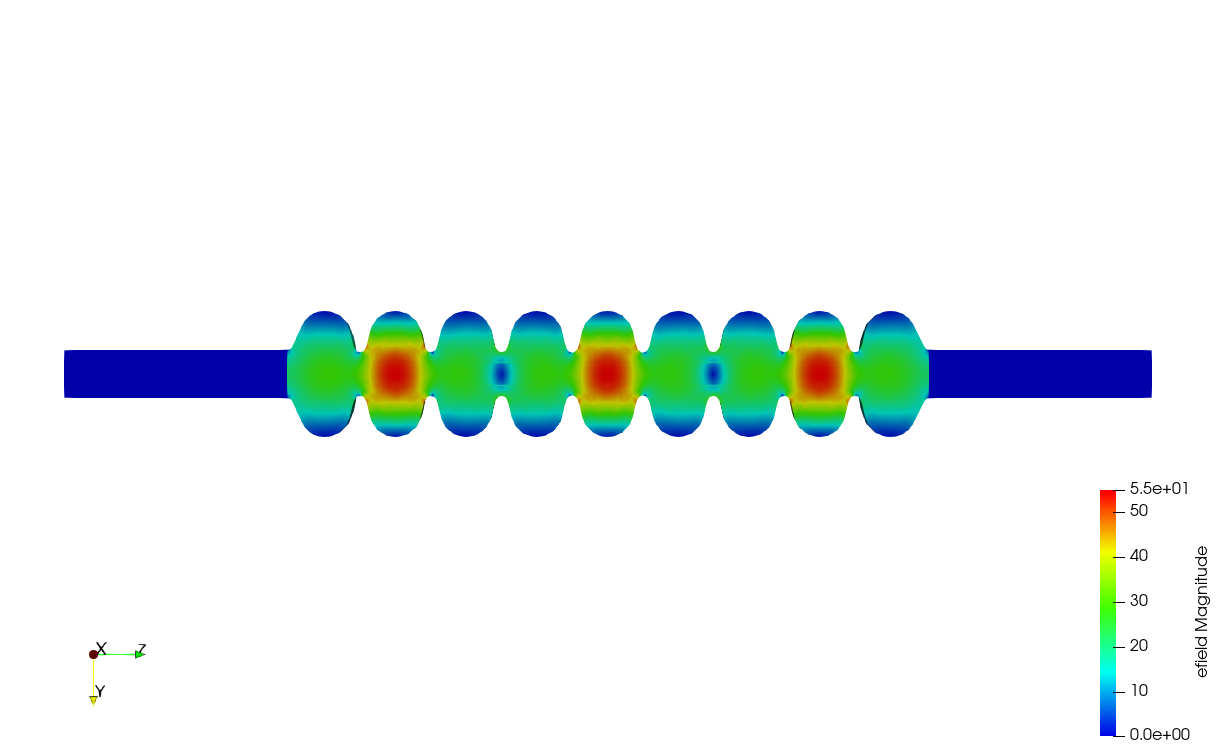}
    \caption{$f = \SI{1.2830}{\giga \hertz}$}
    \end{subfigure}\vspace{0.1em}

\begin{subfigure}[c]{.8\textwidth}\centering
    \includegraphics[trim = 200 310 200 300, clip, width = .65\textwidth]{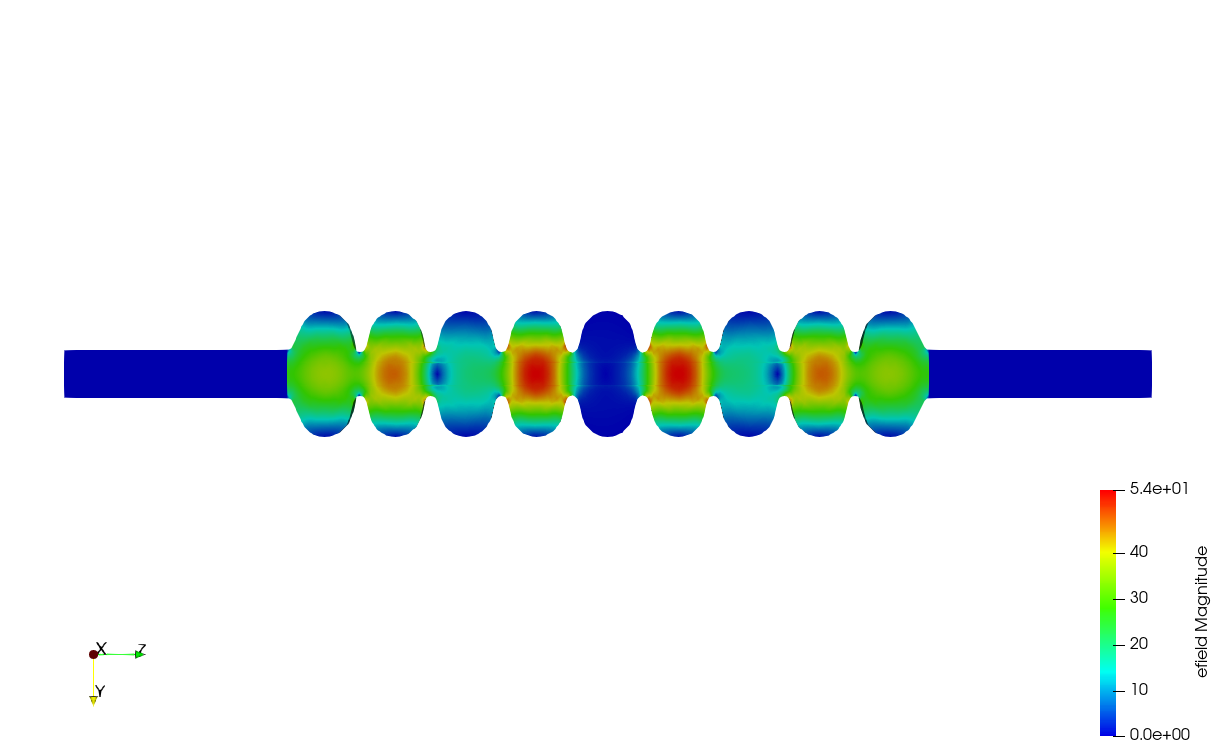}
    \caption{$f = \SI{1.2870}{\giga \hertz}$}
    \end{subfigure}\vspace{0.1em}

\begin{subfigure}[c]{.8\textwidth}\centering
    \includegraphics[trim = 200 310 200 300, clip, width = .65\textwidth]{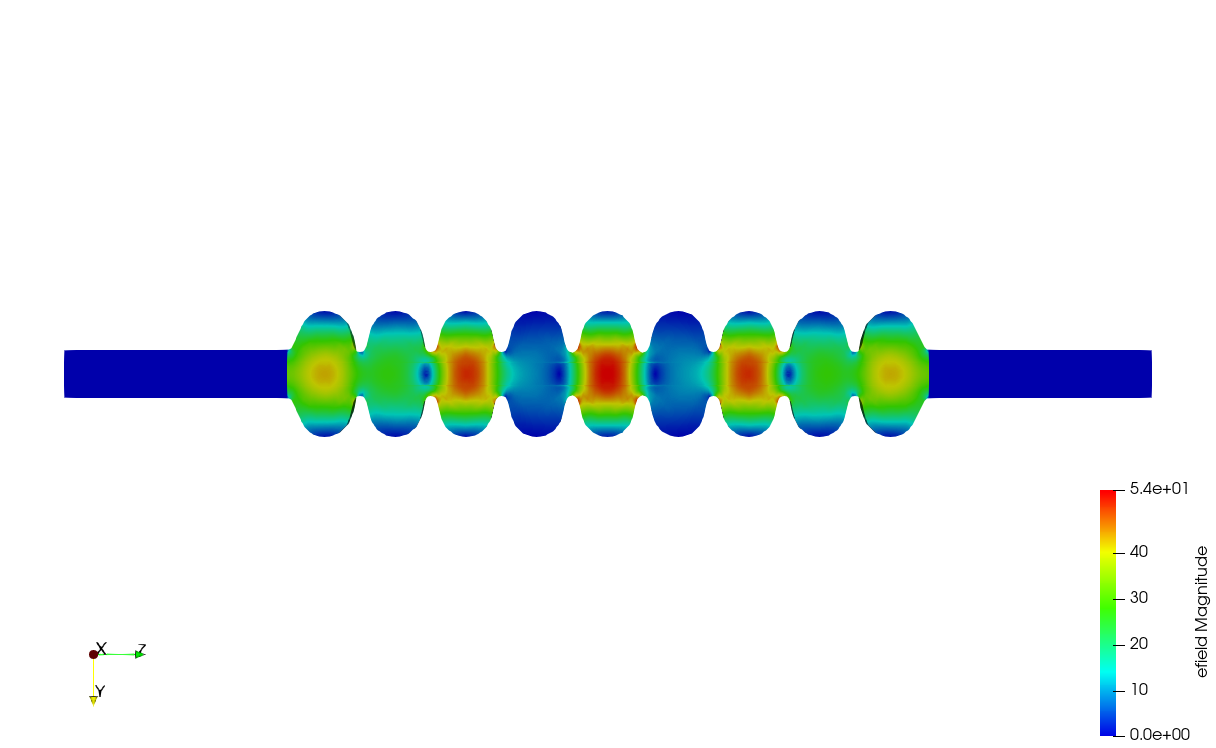}
    \caption{$f = \SI{1.2912}{\giga \hertz}$}
    \end{subfigure}\vspace{0.1em}

\begin{subfigure}[c]{.8\textwidth}\centering
    \includegraphics[trim = 200 310 200 300, clip, width = .65\textwidth]{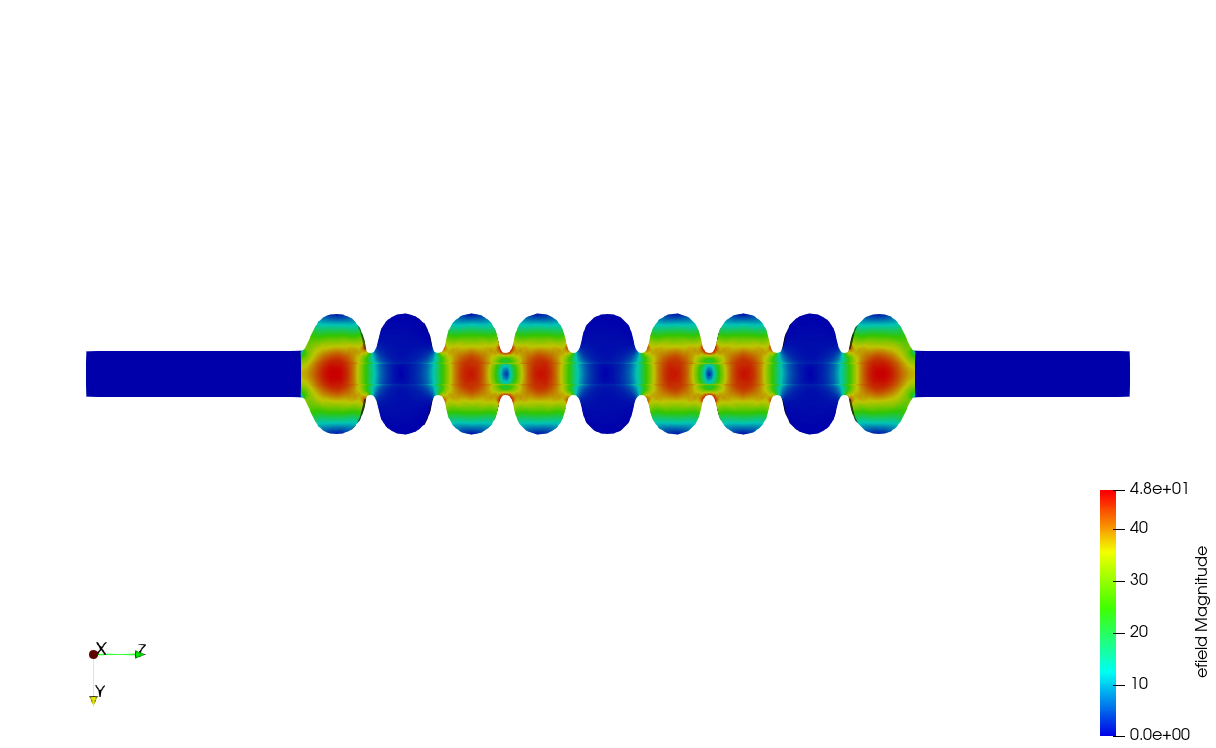}
    \caption{$f = \SI{1.2952}{\giga \hertz}$}
    \end{subfigure}\vspace{0.1em}

\begin{subfigure}[c]{.8\textwidth}\centering
    \includegraphics[trim = 200 310 200 300, clip, width = .65\textwidth]{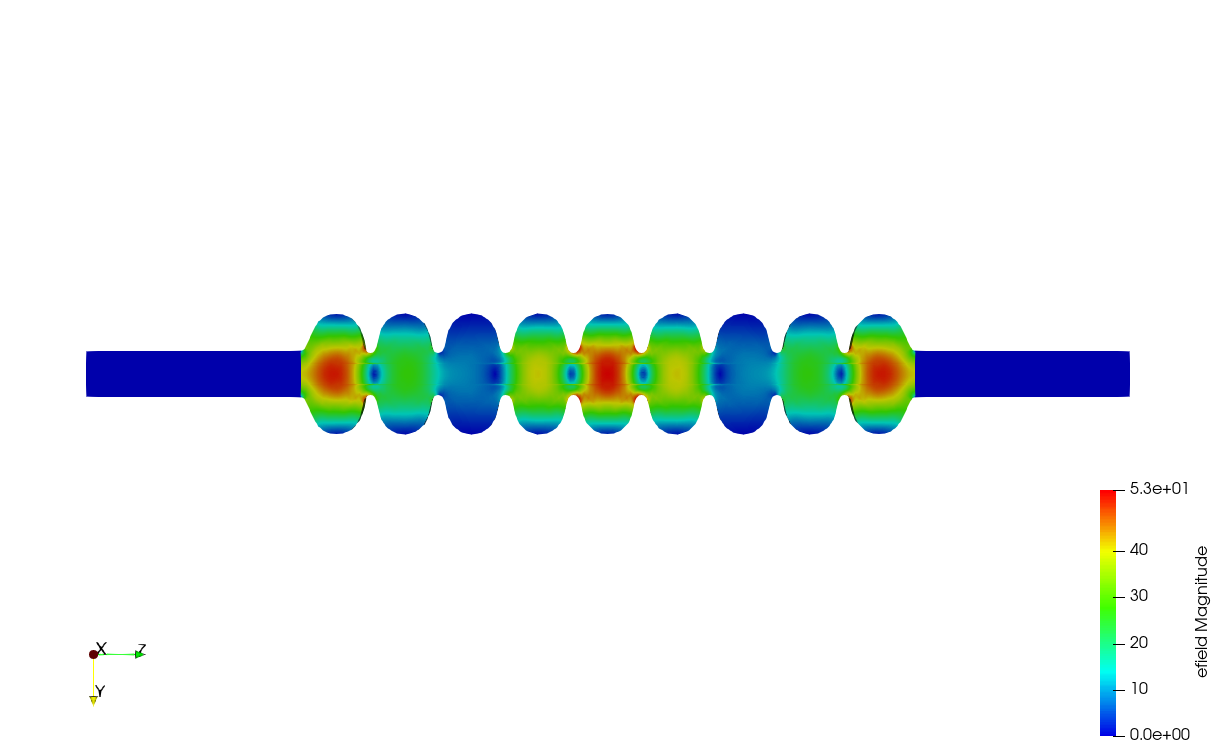}
    \caption{$f = \SI{1.2985}{\giga \hertz}$}
    \end{subfigure}\vspace{0.1em}

\begin{subfigure}[c]{.8\textwidth}\centering
    \includegraphics[trim = 200 310 200 300, clip, width = .65\textwidth]{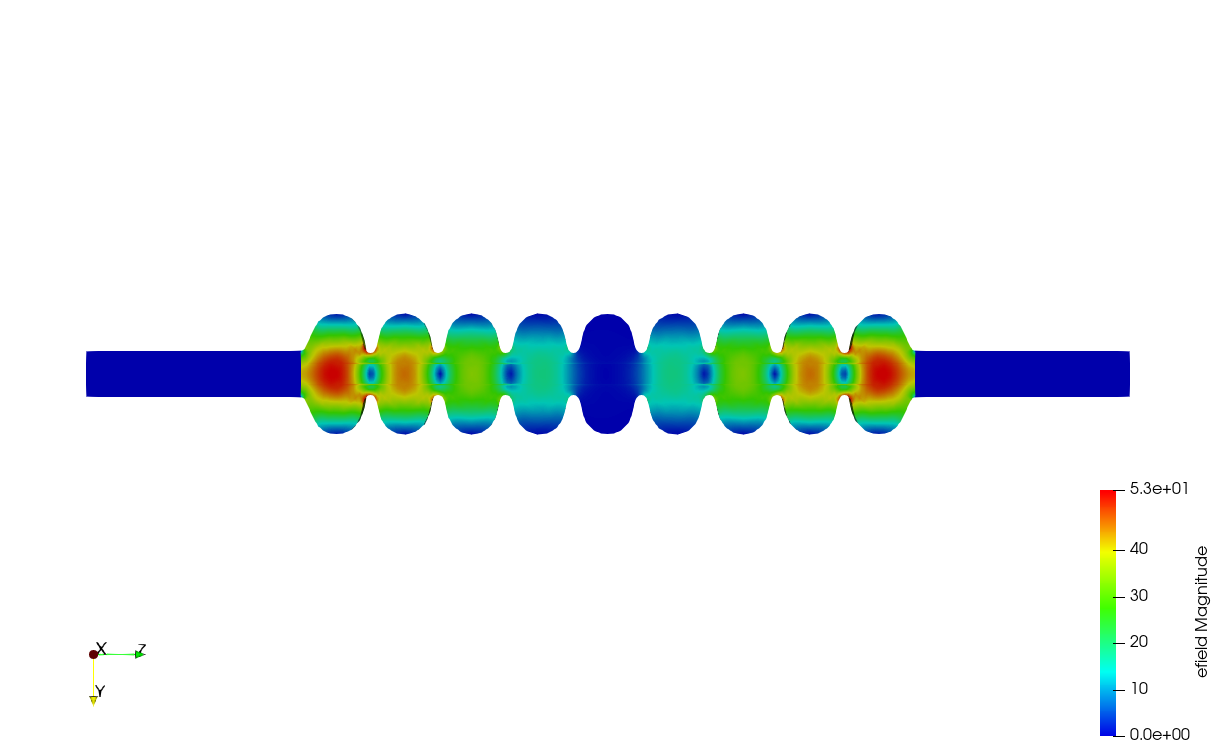}
    \caption{$f = \SI{1.3006}{\giga \hertz}$}
    \end{subfigure}\vspace{0.1em}

\begin{subfigure}[c]{.8\textwidth}\centering
    \includegraphics[trim = 200 310 200 300, clip, width = .65\textwidth]{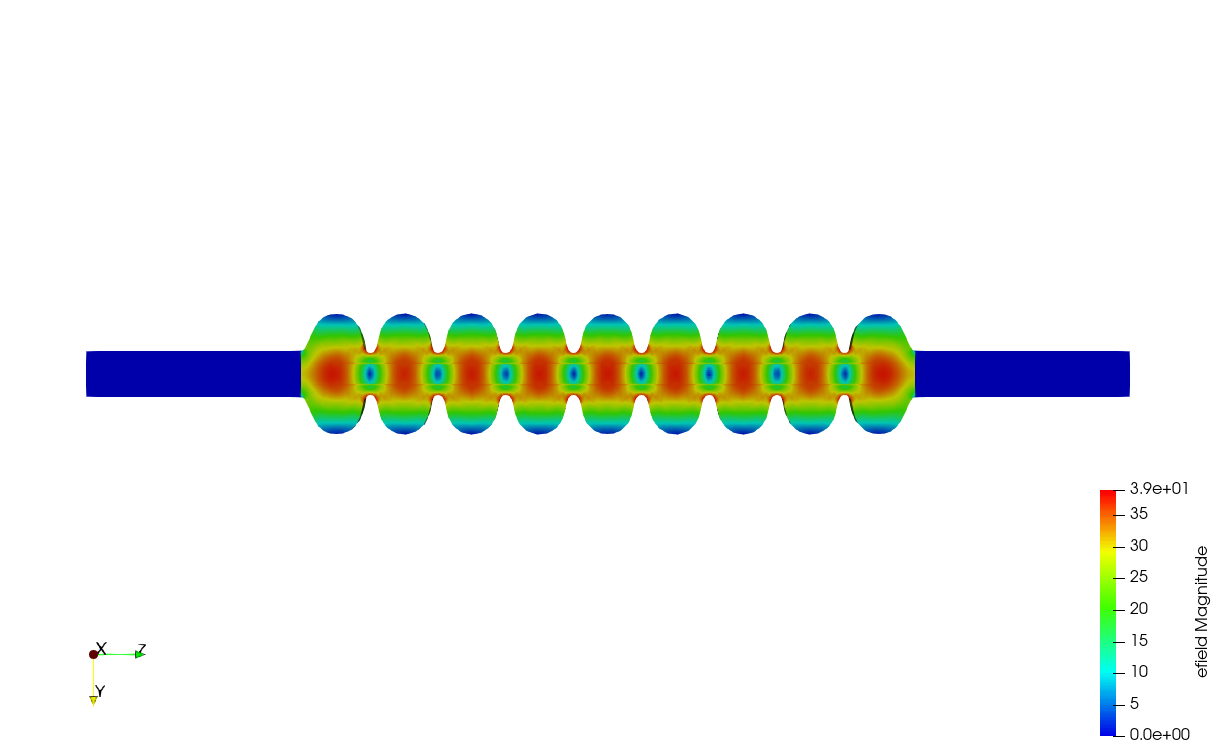}
    \caption{$f = \SI{1.3014}{\giga \hertz}$}
    \label{fig:E-mode9}
\end{subfigure}

\begin{tikzpicture}[overlay, xshift = 0.9\textwidth, yshift = 0.15\textwidth]
 \def\x{0.15}
 \def\h{-1.52}
 \def\d{1.486}
     \node[anchor=south] at (0,-1.65cm) {\includegraphics [width=0.4cm, height = 1.5cm] {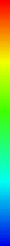}};
     \foreach \y in {0,0.2,0.4,0.6,0.8,1} \draw (\x,\h+\d*\y)--(\x+0.1,\h+\d*\y) node [anchor=west] {\footnotesize $\y$};
     \node[] at (0.1,0.35) {\footnotesize $|\vec{E}|$ $(\SI{}{\volt/\meter})$};
\end{tikzpicture}

    \caption{Nine smallest eigenvalues of a 9-cell TESLA cavity and respective eigenmodes (cross-section along the longitudal direction, color map according to the normalized magnitude of the electric field strength).}
    \label{fig:E-modes}
\end{figure}

\subsection{Shape Perturbation Model}
We model the possible shape perturbations by means of the domain mapping approach with respect to a physical reference domain $\domain_0 \in\R^3$. We assume that the deformations are parametrized by parameters $\vec{\rand} = [\rand_1,\ldots,\rand_\nKL] \in [-1,1]^\nKL=:\mathbb{U}$, where we allow $\nKL = \infty$, and a scaling parameter ${t}\in \R$, which scales the amplitude of the deformation.
A perturbed domain $\domain_{\vt}$ is then given by
\begin{equation}\label{eq:newdomain}
\domain_{\vt}=\maptil_{\vt}(\domain_0;\vec{\rand}),
\end{equation}
with
\begin{align} \label{eq:deformation}
    \maptil_{\vt}: \domain_0 \to \R^3,
    \qquad
    \vec{x}\mapsto\maptil_{\vt}(\vec{x};\vec{\rand}),
\end{align}
and 
\begin{equation}\label{eq:maptilass}
\maptil_{0}(\vec{x};\vec{\rand}) = \vec{x} = \maptil_{\vt}(\vec{x};\vec{0}).
\end{equation}
To avoid technicalities we assume that the \emph{deformation field} $\maptil_{\vt}$ is continuously differentiable in $\vec{x}$ with the derivative in $\vec{x}$, i.e. $\partial \maptil_\vt$, being smooth in $\vt$. In practice, the domain deformations are often modelled by means of Karhunen-Lo\`eve type expansions as discussed in \cref{sec:KL}. These models satisfy the above assumption \eqref{eq:maptilass}, but our approach can also deal with more general domain deformation models.

An example of such a deformation for the TESLA cavity from \cref{fig:tesla} can be seen in \cref{fig:deformation}, where the deformation is visualized amplified by a factor $250$ and the middle axis of the cavity is depicted in red. 
We note that we define the reference geometry as the sample mean geometry of a set of measured cavities.
This is the reason why we observe a slightly curved central axis in \cref{fig:refDomain} which obviously deviates from the nominal design. 
The example deformation in \cref{fig:deformation} is obtained by generating a deformation direction using the Karhunen-Lo\`eve type expansion with the random variables $\vec{\rand}$. 
For visualisation purposes, this is amplified by a factor $250$.
After determining this displacement direction, it was scaled by $t = \{0.5,1\}$ and applied to the reference geometry to illustrate the domains in \cref{fig:Domain05} and \cref{fig:Domain1}, respectively.
For a more detailed description of this setting, we refer the reader to \cref{sec:num_example_setup}.

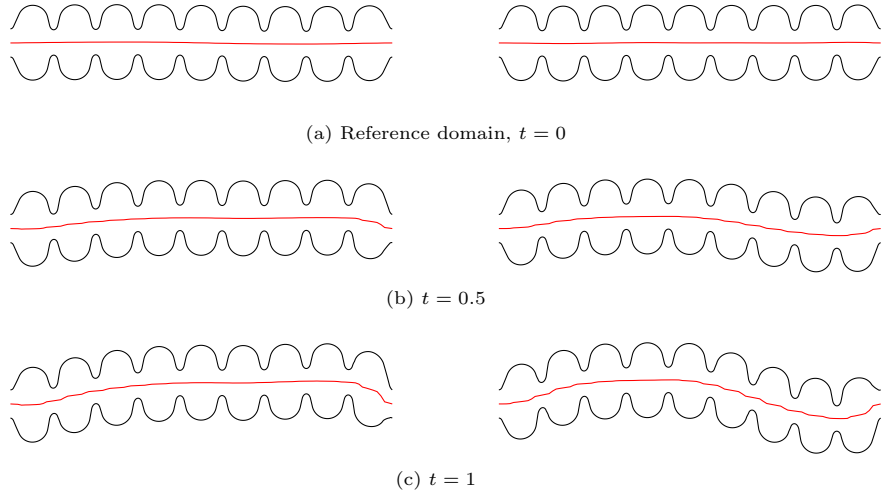
\begin{figure}
    \centering
    {\pgfplotsset{
    axis line style={draw=none},
    tick style={draw=none},
    yticklabels={,,},
    xticklabels={,,},
    width = 0.6\textwidth,
    axis equal image,
    }

    \centering
    \def\minleft{-0.5224}
    \def\maxright{0.5240}
    \def\shift{0.25}
\begin{subfigure}[c]{.85\textwidth}
\begin{tikzpicture}
\begin{axis}[
ymin = -0.11,
ymax = 0.11,
	]
\addplot[color=black] table[x=z,y=y,col sep=comma]{data/t_0_iptc_2_yz.csv};
\addplot[color=black] table[x=z,y=y,col sep=comma]{data/t_0_iptc_5_yz.csv};
\addplot[color = red] table[x=z,y=y,col sep=comma]{data/t_0_axis_yz.csv};
\end{axis}
\end{tikzpicture}~
\begin{tikzpicture}
\begin{axis}[
ymin = -0.11,
ymax = 0.11,
	]
\addplot[color=black] table[x=z,y=x,col sep=comma]{data/t_0_iptc_2_xz.csv};
\addplot[color=black] table[x=z,y=x,col sep=comma]{data/t_0_iptc_5_xz.csv};
\addplot[color = red] table[x=z,y=x,col sep=comma]{data/t_0_axis_xz.csv};
\end{axis}
\end{tikzpicture}
\caption{Reference domain, $t=0$}
\label{fig:refDomain}
\end{subfigure}
\vspace{1em}

\begin{subfigure}[c]{.85\textwidth}
\begin{tikzpicture}
\begin{axis}[
ymin = -0.125,
ymax = 0.14,
	]
\addplot[color=black] table[x=z,y=y,col sep=comma]{data/t_0.5_iptc_2_yz.csv};
\addplot[color=black] table[x=z,y=y,col sep=comma]{data/t_0.5_iptc_5_yz.csv};
\addplot[color = red] table[x=z,y=y,col sep=comma]{data/t_0.5_axis_yz.csv};
\end{axis}
\end{tikzpicture}~
\begin{tikzpicture}
\begin{axis}[
ymin = -0.125,
ymax = 0.14,
	]
\addplot[color=black] table[x=z,y=x,col sep=comma]{data/t_0.5_iptc_2_xz.csv};
\addplot[color=black] table[x=z,y=x,col sep=comma]{data/t_0.5_iptc_5_xz.csv};
\addplot[color = red] table[x=z,y=x,col sep=comma]{data/t_0.5_axis_xz.csv};
\end{axis}
\end{tikzpicture}\vspace{-1em}
\caption{$t=0.5$}
\label{fig:Domain05}
\end{subfigure}
\vspace{1em}

\begin{subfigure}[c]{.85\textwidth}
\begin{tikzpicture}
\begin{axis}[
ymin = -0.14,
ymax = 0.17,
	]
\addplot[color=black] table[x=z,y=y,col sep=comma]{data/t_1_iptc_2_yz.csv};
\addplot[color=black] table[x=z,y=y,col sep=comma]{data/t_1_iptc_5_yz.csv};
\addplot[color = red] table[x=z,y=y,col sep=comma]{data/t_1_axis_yz.csv};

\end{axis}
\end{tikzpicture}~
\begin{tikzpicture}
\begin{axis}[
ymin = -0.14,
ymax = 0.17,
	]
\addplot[color=black] table[x=z,y=x,col sep=comma]{data/t_1_iptc_2_xz.csv};
\addplot[color=black] table[x=z,y=x,col sep=comma]{data/t_1_iptc_5_xz.csv};
\addplot[color = red] table[x=z,y=x,col sep=comma]{data/t_1_axis_xz.csv};

\end{axis}
\end{tikzpicture}\vspace{-1em}
\caption{$t=1$}
\label{fig:Domain1}
\end{subfigure}}
    \caption{Physical reference domain of the TESLA cavity (top) and domain according to a sampled deformation direction $\vec{\rand}$, ($t = \{0,.5,1\}$). On the left, the deformation in the $yz$-plane. On the right, the deformation in the $xz$-plane. The deformations are displayed increased by factor $250$. The red line indicates the deformation of the central axis.}
    \label{fig:deformation}
\end{figure}
In the following section, we show how the transformation can be translated into an eigenvalue problem involving coefficients.

\subsection{Maxwell's Eigenvalue Problem on Perturbed Domains}
We start by formulating the eigenvalue equation \eqref{eq:evp} on the perturbed domain~$\domain_\vt$ to obtain
\begin{subequations}\label{eq:MWErandomdomain}
\begin{align} 
    \curl (\curl \vec{E}_{\vt} ) &= {\lambda}_\vt \vec{E}_{\vt} && \text{in } \domain_\vt , \\
    \div \vec{E}_{\vt} &= 0 && \text{in } \domain_\vt , \\
    \vec{E}_{\vt} \times \vec{n}_{\vt} &= 0 && \text{on } \boundary_{\vt} , \\
    \big(\vec{E}_{\vt}, \vec{E}_{\vt}\big)_{\domain_\vt} &= 1 ,
\end{align}
\end{subequations}
with $\vec{n}_{\vt}$ being the outward pointing normal vector to $\domain_\vt$ from \cref{eq:newdomain}.
Going to the weak formulation, using the deformation mapping~\eqref{eq:deformation}, substitution rules, cf. \cite[Corollary 3.58]{monk_2003}, to pull the problem back to~$\domain_0$, and the isomorphy between $\HzerocurlDt$ and $\HzerocurlDzero$ \cite[Lemma 2.2.]{jerez-hanckes_2017}, we recover a strong formulation on the physical reference domain
\begin{subequations}\label{eq:MWErefdomain}
\begin{align}
    \curl ( \coeffstiff_{\vt} \curl \vec{E}_{0,\vt} ) 
    &= {\lambda}_{0,\vt} \coeffmass_{\vt} \vec{E}_{0,\vt} 
    && \text{in } \domain_0 , \\
    \div (\coeffmass_{\vt} \vec{E}_{0,\vt})  &= 0 
    && \text{in } \domain_0 , \\
    \vec{E}_{0,\vt}  \times \vec{n}_{0} &= 0 \label{eq:evp_coeff:normal}
    && \text{on } \boundary_0 , \\
    \big(\coeffmass_{\vt} \vec{E}_{0,\vt}, \vec{E}_{0,\vt}\big)_{\domain_0} &= 1,
\end{align}
with coefficients
\begin{align} 
    \coeffstiff_{\vt} &= \frac{1}{\det(\partial \maptil_{\vt})}\partial \maptil_{\vt}^\top \partial \maptil_{\vt} , \label{eq:defcoeffC}\\
    \coeffmass_{\vt}  &= \det(\partial \maptil_{\vt}) \partial \maptil_{\vt}^{-1} \partial \maptil_{\vt}^{-\top}\label{eq:defcoeffA},
\end{align}
\end{subequations}
and where $\partial$ denotes the derivative in the spatial coordinates $\vec{x}\in \domain_0$. We remark that an eigenvalue to \eqref{eq:MWErandomdomain} is also an eigenvalue to \eqref{eq:MWErefdomain} and vice versa, i.e., it holds $\lambda_\vt=\lambda_{0,\vt}$. Moreover, each eigenfunction of  \eqref{eq:MWErandomdomain} corresponds to an eigenfunction of \eqref{eq:MWErefdomain} and the correspondence is given through $\vec{E}_{0,\vt}(\vec{x}_0) = \vec{E}_{\vt}(\maptil_{\vt}(\vec{x}_0;\vec{\rand}))$ for all $\vec{x}_0\in\domain_0$ and $\vec{\rand}\in\mathbb{U}$.

This transformation allows us to analyze the deformations of the domain as variations of coefficients of the eigenvalue problem, allows to compare eigenpairs of different domain realizations, and enables us to compute derivatives. 
However, for a full characterization of the eigenpair derivatives, we need to have the derivatives of the domain transformation coefficients, i.e.\ $\coeffstiff_\vt$ and $\coeffmass_\vt$, to our disposal. These derivatives are available, as we assume $\partial \maptil_{\vt}$ to be smooth in  $\vt$, see \cref{eq:deformation,eq:maptilass}.

\subsection{Derivatives of Domain Transformation Coefficients}
For fixed $\vec{\rand}\in\mathbb{U}$, the first and higher directional derivatives of $\coeffstiff_{\vt}$ and $\coeffmass_{\vt}$ in $\vt$ have been computed in \cite{ziegler_computation_2022}. Referring to this article, the first directional derivatives are given as
\begin{subequations} \label{eq:ddt_CA}
\begin{align} \label{eq:ddt_C}
    \begin{aligned}
        \ddt \coeffstiff_{\vt} 
        ={}& - \tr \left(\left(\ddt \partial \maptil_{\vt}\right) \partial \maptil_{\vt}^{-1} \right) \coeffstiff_{t} 
        + \frac{1}{\det(\partial \maptil_{\vt})} \left(\ddt \partial \maptil_{\vt}\right)^\top \partial \maptil_{\vt}  \\
	    &\quad+ \frac{1}{\det(\partial \maptil_{\vt})}  \left( \left(\ddt \partial \maptil_{\vt}\right)^\top \partial \maptil_{\vt} \right)^\top  ,
    \end{aligned} 
\end{align}  
and
\begin{align}
    \begin{aligned} \label{eq:ddt_A}
        \ddt \coeffmass_{t} 
        ={}&  \tr \left( \left(\ddt \partial \maptil_{\vt} \right) \partial \maptil_{\vt}^{-1} \right) \coeffmass_{t} 
        - (\partial \maptil_{\vt})^{-1} \left(\ddt \partial \maptil_{\vt} \right) \coeffmass_{t} \\
        &\quad- \left( (\partial \maptil_{\vt})^{-1} \left(\ddt\partial \maptil_{\vt}\right) \coeffmass_{t} \right)^\top .
    \end{aligned}
\end{align}
\end{subequations}
To keep notation simple, for a given differentiable function $\mathbf{B}_{\vt}$, we note the derivative evaluated at the reference $t=0$ as
\[
\D_{\vt}\mathbf{B}_{\vt} = \ddt \mathbf{B}_{\vt}\bigg|_{\vt=0}.
\]
Thus, for any deformation parameter $\vec{\rand}\in \mathbb{U}$ and $t\in\R$, we can linearize $\coeffstiff_{\vt}$ and $\coeffmass_{\vt}$ as
\begin{align*}
    \coeffstiff_{\vt}
    &=\coeffstiff_{\vec{0}} + \vt (\D_{\vt}\coeffstiff_{\vt}) + \LandauO(\vt^2),\\
    \coeffmass_{\vt}
    &=\coeffmass_{\vec{0}} + \vt (\D_{\vt}\coeffmass_{\vt}) + \LandauO(\vt^2).
\end{align*}
As we will see in a moment, these linearizations are precisely the entities required to compute derivatives of eigenpairs.

\subsection{Derivatives of Eigenpairs}
The derivatives of eigenpairs of single multiplicity where originally characterized in \cite{Rel1940}. While 
the characterization for eigenpairs of higher multiplicity in \cite{Rel1940} was a nonlinear mapping,
\cite{DE2024} resolved this anomaly by deriving a linear characterization which is consistent to the case of
single multiplicity. Moreover, it was discussed that for perturbation theories
of eigenpairs to eigenvalues with multiplicity $m>1$ one must resort to a perturbation theory of the full
eigenspace, i.e., we need to consider the eigenvalue problem
\begin{subequations}\label{eq:MWErefdomainmultiple}
\begin{align}
    \curl ( \coeffstiff_{\vt} \curl \vec{E}_{0,\vt} ) 
    &= \coeffmass_{\vt} \vec{E}_{0,\vt} \vec{\lambda}_{0,\vt}
    && \text{in } \domain_0 , \\
    \div (\coeffmass_{\vt} \vec{E}_{0,\vt})  &= 0 
    && \text{in } \domain_0 , \\
    \vec{E}_{0,\vt}  \times \vec{n}_{0} &= 0
    && \text{on } \boundary_0 , \\
    \big(\coeffmass_{\vt} \vec{E}_{0,\vt}, \vec{E}_{0,\vt}\big)_{\domain_0} &= \vec{I},
\end{align}
\end{subequations}
where by a slight abuse of notation we write $\vec{E}_{0,t}=\big[[\vec{E}_{0,t}]_1,\ldots,[\vec{E}_{0,t}]_m\big]\in\big[\HzerocurlDzero\big]^m$ and $\vec{\lambda}_{0,t}\in\R^{m\times m}$. We remark that, for $m=1$, \eqref{eq:MWErefdomainmultiple} clearly coincides with the original problem \eqref{eq:MWErefdomain}.
Moreover, it is easy to check that, for fixed $t$ and $\vec{\rand}$, any solution to \eqref{eq:MWErefdomainmultiple} corresponds to exactly $m$ solutions to \eqref{eq:MWErefdomain} and vice versa, see, e.g., \cite[Lemma 2.3]{DE2024} for a proof.

In the following, we will assume that $\lambda_0=\lambda_{0,0}$ is an eigenvalue of multiplicity $m$ with corresponding orthogonal eigenfunctions $\vec{E}_{0,t}=\big[[\vec{E}_{0,t}]_1,\ldots,[\vec{E}_{0,t}]_m\big]\in\big[\HzerocurlDzero\big]^m$. Thus, \eqref{eq:MWErefdomainmultiple} holds with $\vec{\lambda}_{0,0}=\lambda_{0,0}\vec{I}$. 
Following \cite{DE2024}, the derivative of $(\vec{\lambda}_{0,\vt},\vec{E}_{0,t})$ in $t$ at $t=0$ is given as the solution of $m$ saddle point problems
\begin{subequations}\label{eq:derivative}
\begin{align}
    \curl (\coeffstiff_0 \curl \D_\vt [\vec{E}_{0,\vt}]_i)
    &- \coeffmass_0 (\D_\vt [\vec{E}_{0,\vt}]_i){\lambda}_{0,0}
    - \sum_{j=1}^m\coeffmass_0 [\vec{E}_{0,0}]_j ([\D_\vt \vec{\lambda}_{\vt}]_{ji})\label{eq:der_eigpair:main1}\\
    &= - (\D_\vt \coeffstiff_{\vt}) [\vec{E}_{0,0}]_i + (\D_\vt \coeffmass_{\vt}) [\vec{E}_{0,0}]_i {\lambda}_{0,0}
    && \text{in } \domain_0, \label{eq:der_eigpair:main2} \\
    - \div \big(\coeffmass_0 (\D_\vt[\vec{E}_{0,\vt}]_i)\big) &= \div \big((\D_\vt\coeffmass_{\vt} )[\vec{E}_{0,0}]_i\big)
    && \text{in } \domain_0, \label{eq:der_eigpair:div} \\
    (\D_\vt [\vec{E}_{0,\vt}]_i) \times \vec{n}_0 
    &= 0 
    && \text{on } \boundary_0, \label{eq:der_eigpair:boundarynormal}\\
    - \Big( \coeffmass_0 (\D_\vt [\vec{E}_{0,\vt}]_i), [\vec{E}_{0,0}]_j\Big)_{\domain_0}
    &= \frac{1}{2}\Big( (\D_\vt \coeffmass_{\vt} ) [\vec{E}_{0,0}]_i, [\vec{E}_{0,0}]_i\Big)_{\domain_0} \label{eq:der_eigpair:norm} \\
    - \Big( \coeffmass_0 (\D_\vt [\vec{E}_{0,\vt}]_i), [\vec{E}_{0,0}]_j \Big)_{\domain_0}
    &= 0 \label{eq:der_eigpair:orth}
    &&\hspace{-2cm}i\neq j;\ j=1,\ldots,m,
\end{align} \label{eq:der_eigpair}
\end{subequations}%
for $i=1,\ldots,m$ and where we abbreviate $\D_\vt\vec{E}_{0,\vt}=\big[\D_\vt [\vec{E}_{\vt}]_1,\ldots,\D_\vt [\vec{E}_{\vt}]_m\big]$.
The system of equations is of saddle point form, thus unique solvability can be shown using a Ladyzhenskaya-Babuška-Brezzi (LBB)-condition, see \cite{DE2024} for a proof.
We will refer to \eqref{eq:der_eigpair:main1} and \eqref{eq:der_eigpair:main2} as the \emph{main condition}, \eqref{eq:der_eigpair:norm} as the \emph{normalization constraint}, and \eqref{eq:der_eigpair:orth} as the \emph{orthogonality constraint}. 
In its discretization, \eqref{eq:der_eigpair:norm} and \eqref{eq:der_eigpair:orth} are combined into a matrix-valued \emph{orthonormality constraint}.
\eqref{eq:der_eigpair:div} is the \emph{divergence constraint} and \eqref{eq:der_eigpair:boundarynormal} is the \emph{boundary normal constraint}. To shorten notation, we may define
\begin{equation}\label{eq:fancydifferential}
\vec{\mathfrak{A}}
=
\begin{bmatrix}
\curl (\coeffstiff_0 \curl \cdot)-{\lambda}_{0,0}\coeffmass_0 &
-\coeffmass_0 \vec{E}_{0,0}\\
- \div \big(\coeffmass_0\cdot\big)& 0\\
\cdot \times \vec{n}_0 & 0\\
- \big( \coeffmass_0 \cdot, \vec{E}_{0,0}\big)_{\domain_0}& 0
\end{bmatrix}
\end{equation}
to abbreviate \eqref{eq:derivative} as
\begin{equation}\label{eq:shortderivative}
\vec{\mathfrak{A}}
\begin{bmatrix}
\D_\vt\vec{E}_{0,\vt}\\
\D_\vt\vec{\lambda}_{0,\vt}
\end{bmatrix}
=
\begin{bmatrix}
 - (\D_\vt \coeffstiff_{\vt}) \vec{E}_{0,0} + (\D_\vt \coeffmass_{\vt}) \vec{E}_{0,0} {\lambda}_{0,0}\\
 \div (\D_\vt\coeffmass_{\vt} \vec{E}_{0,0})\\
 0\\
 \diag \limits_{i=1,\ldots,m}\frac{1}{2}\Big( (\D_\vt \coeffmass_{\vt}) [\vec{E}_0]_i, [\vec{E}_0]_i \Big)_{\domain_0}
\end{bmatrix}.
\end{equation}

Since $(\vec{\lambda}_{0,\vt},\vec{E}_{0,\vt})$ depends analytically on $\coeffstiff_\vt$ and $\coeffmass_\vt$, see \cite{DE2024}, the assumptions on the deformation field yield that $(\vec{\lambda}_{0,\vt},\vec{E}_{0,\vt})$ satisfies the expansions
\begin{subequations}
\begin{align*}
    \vec{E}_{0,\vt} &= \vec{E}_{0,0} + t (\D_\vt \vec{E}_{0,\vt}) + \LandauO(t^2),\\
    \vec{\lambda}_{0,\vt} &= \vec{\lambda}_{0,0} + t (\D_\vt \vec{\lambda}_{0,\vt}) + \LandauO(t^2).
\end{align*}
\end{subequations}
Most notably, $(\vec{\lambda}_{0,\vt},\vec{E}_{0,\vt})$ satisfies \eqref{eq:MWErefdomainmultiple} in a neighbourhood of $t=0$, with $\vec{\lambda}_{0,\vt}$ not necessarily being diagonal for $t\neq 0$.

In the next section, we shall consider a stochastic model of the geometric deformations. 
We can then use the derivatives of the eigenpair to approximate the mean and covariance of the eigenpair.

\section{Shape Uncertainty Quantification of Eigenvalues and -modes} \label{sec:UQ}

\subsection{Maxwell's Eigenvalue Problem on Randomly Deformed Shapes}

We now go on to consider the case where the deformations are realizations of a stochastic model.
To this end, let $(\samplespace,\eventspace,\probmeasure)$ be a probability space and let $\vec{\rand}(\omega)\in\mathbb{U}$, $\omega\in\Omega$, be a random variable. In complete analogy to \eqref{eq:newdomain}, $\domain_{\vt}(\omega)=\maptil_{\vt}(\domain_0,\vec{\rand}(\omega))$ defines a randomly perturbed domain. Thus, we obtain a new eigenvalue problem, i.e., find $(\lambda_t(\omega),\vec{E}_t(\omega)) \in (\R \times \HzerocurlDtomega)$, such that
\begin{subequations}
\begin{align*} 
    \curl (\curl \vec{E}_\vt(\omega) ) &= {\lambda}_\vt(\omega) \vec{E}_\vt(\omega) && \text{in } \domain_\vt(\omega) , \\
    \div \vec{E}_\vt(\omega) &= 0 && \text{in } \domain_\vt(\omega) , \\
    \vec{E}_\vt(\omega) \times \vec{n}_\vt(\omega) &= 0 && \text{on } \boundary_\vt(\omega) , \\
    \big( \vec{E}_\vt(\omega), \vec{E}_\vt(\omega)\big)_{\domain_\vt(\omega)} &= 1.
\end{align*}
\end{subequations}
As before, the new eigenproblem can be pulled back to the physical reference domain, where the task is to find $(\lambda_{0,\vt}(\omega),\vec{E}_{0,\vt}(\omega)) \in (\R \times \HzerocurlDzero)$, such that
\begin{subequations}\label{eq:randeigref}
\begin{align} 
    \curl \big(\coeffstiff_{\vt}(\omega) \curl \vec{E}_{0,\vt}(\omega) \big) &= {\lambda}_{0,\vt}(\omega) \coeffmass_{\vt}(\omega) \vec{E}_{0,\vt}(\omega) && \text{in } \domain_0 , \\
    \div \coeffmass_{\vt}(\omega) \vec{E}_{0,\vt}(\omega) &= 0 && \text{in } \domain_0 , \\
    \vec{E}_{0,\vt}(\omega) \times \vec{n}_0 &= 0 && \text{on } \boundary_0 , \\
    \big( \coeffmass_{\vt}(\omega) \vec{E}_{0,\vt}(\omega), \vec{E}_{0,\vt}(\omega)\big)_{\domain_0} &= 1.
\end{align}
\end{subequations}
Following \eqref{eq:shortderivative}, the derivatives are characterized by
\begin{equation}\label{eq:randderivative}
\vec{\mathfrak{A}}
\begin{bmatrix}
\D_\vt\vec{E}_{0,\vt}(\omega)\\
\D_\vt\vec{\lambda}_{0,\vt}(\omega)
\end{bmatrix}
=
\begin{bmatrix}
 - \big(\D_\vt \coeffstiff_{\vt}(\omega)\big) \vec{E}_{0,0} + \big(\D_\vt \coeffmass_{\vt}(\omega)\big) \vec{E}_{0,0} {\lambda}_{0,0}\\
 \div \big(\D_\vt\coeffmass_{\vt}(\omega) \vec{E}_{0,0}\big)\\
 0\\
 \diag \limits_{i=1,\ldots,m}\frac{1}{2}\Big(\big(\D_\vt \coeffmass_{\vt}(\omega)\big) [\vec{E}_{0,0}]_i, [\vec{E}_{0,0}]_i \Big)_{\domain_0}
\end{bmatrix},
\end{equation}
with the very same $\vec{\mathfrak{A}}$ as in \eqref{eq:fancydifferential}. We note that only the right-hand side and the derivatives depend on the random parameter $\omega$, but not the differential operator $\vec{\mathfrak{A}}$.
Thus, we arrive at approximations
\begin{align*}
    \vec{\lambda}_{0,\vt}(\omega) 
    &= \vec{\lambda}_{0,0} 
    + t (\D_\vt \vec{\lambda}_{0,\vt})(\omega) 
    + \LandauO(t^2;\omega) 
    && \event \in \samplespace , \\
    \vec{E}_{0,\vt}(\omega) 
    &= \vec{E}_{0,0}
    + t(\D_\vt \vec{E}_{0,\vt})(\omega) 
    + \LandauO(t^2;\omega) 
    && \event \in \samplespace,
\end{align*}
where $\LandauO(\cdot;\omega)$ denotes the usual Landau notation with the involved constant possibly depends on $\omega\in \samplespace$.

\subsection{Expansions for Expected Value and Covariance}

In the following, we will use the derived expansions to approximate the mean and covariance of eigenpairs.
Our goal is to find mean and covariance of $\vec{u} \in \{\vec{\lambda}_{0,\vt},\vec{E}_{0,\vt}\}$, i.e.,
\begin{align}
    &\E[\vec{u}](\vec{x}) = \int \limits_{\samplespace} \vec{u}(\vec{x}) \dif \probmeasure , 
    && \Cov[\vec{u}](\vec{x},\vec{y}) 
    = \int \limits_{\samplespace} 
    \big( \vec{u}(\vec{x}) - \E[\vec{u}](\vec{x}) \big) \otimes \big( \vec{u}(\vec{y}) - \E[\vec{u}](\vec{y}) \big) \dif \probmeasure,
\label{eq:E_Cov}
\end{align}
given a stochastic model of geometric deformations. 
To this end, the following preliminary lemma on the derivatives of the domain transformation coefficients is helpful.
\begin{lemma} \label{lem:GtoCA}
    Assume that $\D_\vt \maptil_{\vt}(\vec{x};\vec{\rand})$ is linear in $\vec{\rand}$ and $\vec{\rand}$ is centered, i.e. $\E[\vec{\rand}] = \vec{0}$, then the mean of the derivatives of the deformation coefficients evaluated at the reference $t=0$ is also centered, i.e.,
    \begin{align*}
        \E[\D_\vt \coeffstiff_{\vt}] = \E[\D_\vt \coeffmass_{\vt}] = 0 .
    \end{align*}
\end{lemma}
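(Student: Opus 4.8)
The plan is to exploit the fact that at the reference parameter $\vt=0$ the Jacobian $\partial\maptil_0$ equals the identity matrix, which turns the derivative formulas \eqref{eq:ddt_CA} into expressions that are \emph{linear} in the single matrix $\D_\vt\partial\maptil_{\vt}$; the claim then follows from linearity of the expectation together with $\E[\vec{\rand}]=\vec{0}$.

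First I would specialize \eqref{eq:ddt_C} and \eqref{eq:ddt_A} to $\vt=0$. By \eqref{eq:maptilass} we have $\maptil_0(\vec{x};\vec{\rand})=\vec{x}$, hence $\partial\maptil_0=\Id$, $\det(\partial\maptil_0)=1$, and $\coeffstiff_0=\coeffmass_0=\Id$. Plugging this in shows
\[
\D_\vt\coeffstiff_{\vt}
= -\tr\big(\D_\vt\partial\maptil_{\vt}\big)\,\Id
+ \D_\vt\partial\maptil_{\vt}
+ \big(\D_\vt\partial\maptil_{\vt}\big)^\top ,
\qquad
\D_\vt\coeffmass_{\vt} = -\,\D_\vt\coeffstiff_{\vt},
\]
so each of $\D_\vt\coeffstiff_{\vt}$ and $\D_\vt\coeffmass_{\vt}$ is the image of $\D_\vt\partial\maptil_{\vt}$ under one fixed linear map on $3\times 3$ matrices, independent of $\vec{\rand}$. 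Equivalently, this is the chain rule applied to the smooth matrix maps $\vec{M}\mapsto\det(\vec{M})^{-1}\vec{M}^{\top}\vec{M}$ and $\vec{M}\mapsto\det(\vec{M})\vec{M}^{-1}\vec{M}^{-\top}$ evaluated at $\vec{M}=\Id$, so the explicit formulas \eqref{eq:ddt_CA} are not even strictly needed; the relation $\D_\vt\coeffmass_{\vt}=-\D_\vt\coeffstiff_{\vt}$ just reflects $\coeffstiff_{\vt}\coeffmass_{\vt}\equiv\Id$.

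Second I would observe that $\D_\vt\partial\maptil_{\vt}$ inherits linearity in $\vec{\rand}$ from $\D_\vt\maptil_{\vt}$. Since $\partial\maptil_{\vt}$ is smooth in $\vt$, the spatial derivative $\partial$ commutes with the $\vt$-derivative at $\vt=0$, so $\D_\vt\partial\maptil_{\vt}=\partial\big(\D_\vt\maptil_{\vt}\big)$; and if $\D_\vt\maptil_{\vt}(\vec{x};\vec{\rand})=\sum_k\rand_k\,\vec{v}_k(\vec{x})$ is linear in $\vec{\rand}$, then so is $\partial\big(\D_\vt\maptil_{\vt}\big)=\sum_k\rand_k\,\partial\vec{v}_k(\vec{x})$, entrywise.

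Combining the two steps, every entry of $\D_\vt\coeffstiff_{\vt}$ and of $\D_\vt\coeffmass_{\vt}$ is a deterministic linear functional of $\vec{\rand}$. Taking expectations, pulling the linear maps out by linearity of $\E$, and using $\E[\vec{\rand}]=\vec{0}$ yields $\E[\D_\vt\coeffstiff_{\vt}]=\E[\D_\vt\coeffmass_{\vt}]=0$. I do not expect a genuine obstacle here; the only point needing a word of justification is the interchange of the spatial and the parametric derivative, which is exactly what the smoothness of $\partial\maptil_{\vt}$ in $\vt$ is assumed for.
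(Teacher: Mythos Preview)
Your proof is correct and follows essentially the same route as the paper: specialize \eqref{eq:ddt_CA} to $\vt=0$ using $\partial\maptil_0=\Id$, observe that the resulting expressions are linear in $\D_\vt\partial\maptil_{\vt}=\partial(\D_\vt\maptil_{\vt})$, and then use linearity in $\vec{\rand}$ together with $\E[\vec{\rand}]=\vec{0}$. Your added remark that $\D_\vt\coeffmass_{\vt}=-\D_\vt\coeffstiff_{\vt}$ follows from differentiating the identity $\coeffstiff_{\vt}\coeffmass_{\vt}\equiv\Id$ is a nice observation not made explicit in the paper.
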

\begin{proof}
    We first note that the assumptions \eqref{eq:maptilass} on the domain transformations, i.e., $\maptil_0(\vec{x};\vec{z})\!=\! \vec{x}$ yield $\partial\maptil_0(\vec{x};\vec{z}) = \vec{I}$ and $\coeffstiff_0, \coeffmass_0 = \vec{I}$.
    So when considering \eqref{eq:ddt_CA} at the reference $\vt=0$, we get
    \begin{subequations}\label{eq:ddt_CA0}
    \begin{align}
        \D_\vt \coeffstiff_{\vt} 
        &= - \tr \left(\left(\D_\vt \partial \maptil_{\vt}\right) \vec{I} \right) \vec{I} 
        + \frac{1}{\det(\vec{I})} \left(\D_\vt \partial \maptil_{\vt}\right)^\top \vec{I} 
        + \frac{1}{\det(\vec{I})} \left( \left(\D_\vt \partial \maptil_{\vt}\right)^\top \vec{I} \right)^\top\\
        &=
        -\tr\left(\D_\vt\partial\maptil_{\vt}\right)\vec{I}
        +\left(\D_\vt \partial \maptil_{\vt}\right)^\top
        +\D_\vt \partial \maptil_{\vt},
    \end{align}
    and
    \begin{align}
        \D_\vt \coeffmass_{t} 
        &= \tr \left( \left(\D_\vt \partial \maptil_{\vt} \right) \vec{I} \right) \vec{I} 
        - \vec{I} \left(\D_\vt \partial \maptil_{\vt} \right) \vec{I} 
        - \left( \vec{I} \left(\D_\vt\partial \maptil_{\vt}\right) \vec{I} \right)^\top\\
        &=
        \tr\left(\D_\vt\partial\maptil_{\vt}\right)\vec{I} 
        - \D_\vt \partial \maptil_{\vt}
        - \left(\D_\vt \partial \maptil_{\vt}\right)^\top\\
        &=
        - \D_\vt \coeffstiff_{\vt},
    \end{align}
    \end{subequations}
    implying
    \begin{align*}
        \E[\D_\vt \coeffstiff_{\vt}] 
        = - \tr \left(\E[\D_\vt \partial \maptil_{\vt}] \right) 
        + \left(\E[\D_\vt \partial \maptil_{\vt}]\right)^\top  
        + \E[\D_\vt \partial \maptil_{\vt}]
        =
        - \E[\D_\vt \coeffmass_{t}].
    \end{align*}
    Moreover, $\maptil_{\vt}(\vec{x};\vec{0}) =\vec{x}$ from the assumptions \eqref{eq:maptilass} and total differentiability imply
    \begin{align*}
        \D_\vt \partial \maptil_{\vt} 
        = \partial \D_\vt \maptil_{\vt}
        = \partial( \D_\vt \maptil_{\vt} |_{\rand=0})
        + \partial (D_{\vec{\rand}} \D_\vt \maptil_{\vt}) \cdot \vec{\rand}
        = \partial (D_{\vec{\rand}} \D_\vt \maptil_{\vt}) \cdot \vec{\rand}.
    \end{align*}
    Taking the mean yields $\E[\D_{\vt}\partial \maptil_{\vt}] = \partial (\D_{\vec{\rand}} \D_\vt \maptil_{\vt}) \cdot \E[\vec{\rand}] = 0$ and thus the assertion.
\end{proof}

\begin{lemma} \label{lem:excov}
    Given the assumptions of \cref{lem:GtoCA} and that the covariance $\Cov[\partial \maptil_\vt(\vec{x};\cdot)]$ exists, the following moments are finite 
    \begin{align*}
        \E[\D_{\vt} \coeffstiff_\vt \otimes \D_{\vt} \coeffstiff_\vt], \  
        \E[\D_{\vt} \coeffstiff_\vt \otimes \D_{\vt} \coeffmass_\vt], \ 
        \E[\D_{\vt} \coeffmass_\vt \otimes \D_{\vt} \coeffmass_\vt] < \infty .
    \end{align*}
\end{lemma}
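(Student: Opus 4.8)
The plan is to trace both coefficient derivatives back to the single random matrix $\D_\vt\partial\maptil_{\vt}$ and to reduce the three claimed moments to the hypothesised existence of $\Cov[\partial\maptil_{\vt}(\vec x;\cdot)]$. First I would reuse the identities already established in the proof of \cref{lem:GtoCA}: at $\vt=0$ one has, pointwise in $\event\in\samplespace$,
\[
\D_\vt\coeffmass_{\vt}=-\,\D_\vt\coeffstiff_{\vt}
\qquad\text{and}\qquad
\D_\vt\coeffstiff_{\vt}=-\tr\!\big(\D_\vt\partial\maptil_{\vt}\big)\vec I+\big(\D_\vt\partial\maptil_{\vt}\big)^\top+\D_\vt\partial\maptil_{\vt}.
\]
The first identity makes all three tensors in the statement equal to $\pm\,\E[\D_\vt\coeffstiff_{\vt}\otimes\D_\vt\coeffstiff_{\vt}]$, so it suffices to show that this one tensor has finite entries.

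For that I would use the second identity: every entry of $\D_\vt\coeffstiff_{\vt}$ is one and the same linear combination -- with coefficients depending only on the space dimension, not on $\event$ -- of the entries of $\D_\vt\partial\maptil_{\vt}$. Hence every entry of $\D_\vt\coeffstiff_{\vt}\otimes\D_\vt\coeffstiff_{\vt}$ is a fixed linear combination of products $(\D_\vt\partial\maptil_{\vt})_{ij}(\D_\vt\partial\maptil_{\vt})_{kl}$, and a triangle inequality together with $2|ab|\le a^2+b^2$ gives
\[
\big\|\E[\D_\vt\coeffstiff_{\vt}\otimes\D_\vt\coeffstiff_{\vt}]\big\|\le c\,\E\!\big[\|\D_\vt\partial\maptil_{\vt}\|_F^2\big]
\]
for a dimensional constant $c$. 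The lemma thus reduces to $\E[\|\D_\vt\partial\maptil_{\vt}\|_F^2]<\infty$, i.e. to $\D_\vt\partial\maptil_{\vt}\in[L^2(\samplespace)]^{3\times3}$.

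It remains to deduce this bound from the assumptions. By \cref{lem:GtoCA} (the last display in its proof), linearity of $\D_\vt\maptil_{\vt}$ in $\vec{\rand}$ and $\E[\vec{\rand}]=\vec0$ give $\E[\D_\vt\partial\maptil_{\vt}]=\vec0$, so $\E[\|\D_\vt\partial\maptil_{\vt}\|_F^2]=\tr\Cov[\D_\vt\partial\maptil_{\vt}]$. I would then identify this quantity with a reparametrisation of the assumed-finite $\Cov[\partial\maptil_{\vt}(\vec x;\cdot)]$: since $\partial\maptil_0\equiv\vec I$ is deterministic and $\partial\maptil_{\vt}=\vec I+\vt\,\D_\vt\partial\maptil_{\vt}+\LandauO(\vt^2)$ by the smoothness in $\vt$, the difference quotients $\vt^{-1}(\partial\maptil_{\vt}-\vec I)$ have covariance $\vt^{-2}\Cov[\partial\maptil_{\vt}]<\infty$ and converge pointwise to $\D_\vt\partial\maptil_{\vt}$, so Fatou's lemma yields $\Cov[\D_\vt\partial\maptil_{\vt}]<\infty$ and closes the argument. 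In the Karhunen--Lo\`eve-type models of \cref{sec:KL}, where $\D_\vt\maptil_{\vt}$ is an $\ell^2$-convergent series in $\vec{\rand}$, the two covariances are the same object up to the factor $\vt^2$ and this step is immediate.

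The step that needs the most care is precisely this last identification when $\nKL=\infty$: the ``fixed linear combinations'' appearing above are then convergent series in $\vec{\rand}$, so one is really claiming that a series of second moments $\sum_{k,\ell}c_k c_\ell\,\E[\rand_k\rand_\ell]$ converges, which is exactly the trace-class property of the covariance operator packaged into the hypothesis. When $\nKL<\infty$ this is automatic, since $\vec{\rand}\in[-1,1]^{\nKL}$ is bounded and all the quantities involved are polynomials in finitely many bounded random variables; there the proof is pure bookkeeping from the explicit formulae \eqref{eq:ddt_CA0}.
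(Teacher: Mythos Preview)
Your proposal is correct and follows essentially the same route as the paper: the paper's proof is a single sentence observing that $\D_{\vt}\coeffstiff_{\vt}$ and $\D_{\vt}\coeffmass_{\vt}$ are linear in $\D_{\vt}\partial\maptil_{\vt}$ via \eqref{eq:ddt_CA0}, whence the tensor products are bilinear and the existence of $\Cov[\partial\maptil_{\vt}]$ gives finiteness. You unpack the same linearity argument in considerably more detail---in particular, your Fatou step carefully bridges the gap between the hypothesis on $\Cov[\partial\maptil_{\vt}]$ and the needed bound on $\E[\|\D_{\vt}\partial\maptil_{\vt}\|_F^2]$, a point the paper leaves implicit.
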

\begin{proof}
    We find that $\D_{\vt} \coeffstiff_\vt, \D_{\vt} \coeffmass_\vt$ are both linear in $\partial \maptil_\vt$, cf. \eqref{eq:ddt_CA0}, thus the above moments are finite.
\end{proof}

\begin{lemma} \label{lem:Gtoeig}
Let $\lambda_{0,0}$ be an eigenvalue of multiplicity $m$ of \eqref{eq:randeigref} at $t=0$ with orthonormal eigenbasis $\vec{E}_{0,0}$.
Assume that $\D_\vt \maptil_{\vt}(\vec{x};\vec{\rand})$ is linear in $\vec{\rand}$ and $\vec{\rand}$ is centered, i.e. $\E[\vec{\rand}] = \vec{0}$ and assume that $\Cov[\partial \maptil_\vt(\vec{x};\vec{z})]$ exists.
Let $({\vt},\vec{\rand}) \mapsto (\vec{E}_{0,\vt},\vec{\lambda}_{0,\vt})$ be the unique local, analytic trajectory for which it holds $(\vec{E}_{0,\vt},\vec{\lambda}_{0,\vt}) = (\vec{E}_{0,0},\lambda_{0,0} \vec{I})$ at $\vt=0$. 
Then it holds
\begin{subequations} \label{eq:series_stoch}
\begin{align}
    \E[\vec{\lambda}_{0,\vt}] &= \vec{\lambda}_{0,0} + \LandauO(t^2) , \label{eq:series_stoch:lexp}\\
    \E[\vec{E}_{0,\vt}] &= \vec{E}_{0,0} + \LandauO(t^2) , \label{eq:series_stoch:uexp}\\
    \Cov[\vec{\lambda}_{0,\vt}] &= t^2 \Cov\Big[ \D_\vt \vec{\lambda}_{0,\vt} \Big] + \LandauO(t^3) , \label{eq:series_stoch:lcov}\\
    \Cov[\vec{E}_{0,\vt}] &= t^2 \Cov\Big[ \D_\vt \vec{E}_{0,\vt} \Big] + \LandauO(t^3) \label{eq:series_stoch:ucov}.
\end{align}
\end{subequations}
\end{lemma}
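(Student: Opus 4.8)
The plan is to substitute the first-order Taylor expansions of $\vec{\lambda}_{0,\vt}$ and $\vec{E}_{0,\vt}$ established just above into the definitions \eqref{eq:E_Cov} of mean and covariance, and to exploit that their $t$-linear terms are centered. I would write $\vec{u}\in\{\vec{\lambda}_{0,\vt},\vec{E}_{0,\vt}\}$ and let $\vec{u}_0\in\{\vec{\lambda}_{0,0},\vec{E}_{0,0}\}$ denote the deterministic value at $t=0$, so that $\vec{u}(\omega)=\vec{u}_0+t\,(\D_\vt\vec{u})(\omega)+\vec{R}(t;\omega)$ with Taylor remainder $\vec{R}$. The first step is to show that $\D_\vt\vec{u}$ is centered. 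Here I would use the structure of \eqref{eq:randderivative}: the operator $\vec{\mathfrak{A}}$ is deterministic and, by the LBB argument recalled after \eqref{eq:der_eigpair}, boundedly invertible, while its right-hand side depends \emph{linearly} on the pair $\big(\D_\vt\coeffstiff_{\vt}(\omega),\D_\vt\coeffmass_{\vt}(\omega)\big)$, which is square-integrable by \cref{lem:excov}. Thus $\big(\D_\vt\vec{E}_{0,\vt},\D_\vt\vec{\lambda}_{0,\vt}\big)$ is $\vec{\mathfrak{A}}^{-1}$ applied to a Bochner-integrable random field, expectation commutes with $\vec{\mathfrak{A}}^{-1}$, and together with $\E[\D_\vt\coeffstiff_{\vt}]=\E[\D_\vt\coeffmass_{\vt}]=0$ from \cref{lem:GtoCA} this gives $\E[\D_\vt\vec{\lambda}_{0,\vt}]=0$ and $\E[\D_\vt\vec{E}_{0,\vt}]=0$.

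With this in hand, the mean expansions \eqref{eq:series_stoch:lexp}--\eqref{eq:series_stoch:uexp} follow at once: taking expectations of the Taylor expansion leaves $\E[\vec{u}]=\vec{u}_0+\E[\vec{R}(t;\cdot)]$, and a uniform remainder bound $\|\vec{R}(t;\omega)\|\le C(\omega)\,t^2$ with $C\in L^1(\samplespace)$ yields $\|\E[\vec{u}]-\vec{u}_0\|\le t^2\,\E[C]=\LandauO(t^2)$. For the covariances I would combine this with the previous step to write $\vec{u}(\omega)-\E[\vec{u}]=t\,(\D_\vt\vec{u})(\omega)+\widetilde{\vec{R}}(t;\omega)$, $\widetilde{\vec{R}}(t;\omega):=\vec{R}(t;\omega)-\E[\vec{R}(t;\cdot)]$, so that $\|\widetilde{\vec{R}}(t;\omega)\|\le 2C(\omega)\,t^2$, and expand the tensor product in \eqref{eq:E_Cov} into $t^2\,\E[(\D_\vt\vec{u})\otimes(\D_\vt\vec{u})]$, the two mixed terms $t\,\E[(\D_\vt\vec{u})\otimes\widetilde{\vec{R}}]$ and $t\,\E[\widetilde{\vec{R}}\otimes(\D_\vt\vec{u})]$, and $\E[\widetilde{\vec{R}}\otimes\widetilde{\vec{R}}]$. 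Since $\E[\D_\vt\vec{u}]=0$, the leading term equals $t^2\,\Cov[\D_\vt\vec{u}]$, which is finite because $\D_\vt\vec{u}$ is a bounded linear image through $\vec{\mathfrak{A}}^{-1}$ of $\big(\D_\vt\coeffstiff_\vt,\D_\vt\coeffmass_\vt\big)$, whose second moments are finite by \cref{lem:excov} (in the $\big[\HzerocurlDzero\big]^m$-norm when $\vec{u}=\vec{E}_{0,\vt}$, in the $\bbR^{m\times m}$-norm when $\vec{u}=\vec{\lambda}_{0,\vt}$). Cauchy--Schwarz then bounds the two mixed terms by $2t^3\,\|\D_\vt\vec{u}\|_{L^2(\samplespace)}\,\|C\|_{L^2(\samplespace)}=\LandauO(t^3)$ (now using $C\in L^2(\samplespace)$) and the last term by $4t^4\,\|C\|_{L^2(\samplespace)}^2$, giving \eqref{eq:series_stoch:lcov}--\eqref{eq:series_stoch:ucov}.

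The main obstacle is the uniform-in-$\omega$ remainder estimate $\|\vec{R}(t;\omega)\|\le C(\omega)\,t^2$ with $C\in L^2(\samplespace)$, which is what makes the two bookkeeping steps above rigorous. Here I would invoke that $(\vt,\vec{\rand})\mapsto(\vec{E}_{0,\vt},\vec{\lambda}_{0,\vt})$ is, by hypothesis, a local analytic trajectory and that $(\vec{\lambda}_{0,\vt},\vec{E}_{0,\vt})$ depends analytically on $(\coeffstiff_\vt,\coeffmass_\vt)$ near $(\vec{I},\vec{I})$; Taylor's theorem with integral remainder then expresses $\vec{R}(t;\omega)$ through the second $t$-derivative of this composition, hence through a fixed polynomial in $\partial\maptil_{\vt}$, $\D_\vt\partial\maptil_{\vt}$, $\D_\vt^2\partial\maptil_{\vt}$ and $(\partial\maptil_{\vt})^{-1}$, cf. \eqref{eq:ddt_CA} and \cite{ziegler_computation_2022}, together with the first two derivatives of the analytic solution map. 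Turning this into the required $L^2(\samplespace)$ bound needs these deformation quantities to have finite second moments and $\partial\maptil_{\vt}(\cdot;\vec{\rand}(\omega))$ to stay uniformly invertible over $\omega$, with the argument of the solution map uniformly inside its domain of analyticity, for $t$ in a fixed neighbourhood of $0$. For Karhunen--Lo\`eve-type deformation models on the bounded parameter box $\mathbb{U}$ this holds automatically, and under the standing smoothness assumptions on $\maptil_{\vt}$ it may be added as a mild hypothesis; once it is available the estimates above complete the proof.
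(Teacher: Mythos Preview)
Your proposal is correct and follows essentially the same approach as the paper: both start from \cref{lem:GtoCA} to obtain $\E[\D_\vt\coeffstiff_\vt]=\E[\D_\vt\coeffmass_\vt]=0$, which via the deterministic invertible operator $\vec{\mathfrak{A}}$ in \eqref{eq:randderivative} yields $\E[\D_\vt\vec{u}]=0$, and then feed the Taylor expansion into the definitions of mean and covariance. The paper simply defers the remaining bookkeeping (your Cauchy--Schwarz treatment of the cross terms and the remainder estimate) to \cite[Theorem~3.4]{DE2024}, whereas you spell it out; your identification of the uniform-in-$\omega$ remainder bound as the technical crux, and its automatic validity on the bounded parameter box $\mathbb{U}$, is exactly the point hidden behind that citation.
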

\begin{proof}
Lemma~\ref{lem:GtoCA} yields that the first derivatives of the coefficients are centered, i.e., it holds
\begin{align*}
    \mathbb{E}[\D_\vt \coeffstiff_t] = 0 , \qquad
    \mathbb{E}[\D_\vt \coeffmass_t] = 0.
\end{align*}
The rest of the proof follows \cite[Theorem 3.4]{DE2024} with only minor modifications.
\end{proof}

\begin{remark}\label{lem:order4}
If we additionally assume $\mathbb{E}[\vec{\rand} \otimes \vec{\rand} \otimes \vec{\rand}] = \vec{0}$, one can show along the lines of \cite[Lemma 2.3]{HP2018} that the residue term for the covariances improves to $\mathcal{O}(t^4)$.%
\end{remark}

The approximations of the covariances in \eqref{eq:series_stoch:lcov}, \eqref{eq:series_stoch:ucov} involve the covariances of the first derivatives of the eigenpair. 
To this end, those derivatives can be characterized by tensorizing \eqref{eq:randderivative} and taking the mean to obtain
\begin{equation}
    \begin{aligned}
    \big(\vec{\mathfrak{A}}\otimes\vec{\mathfrak{A}}\big)
    \Cov
    \begin{bmatrix}
    \delta_\vt \vec{E}_{0,\vt} \\ \D_\vt \vec{\lambda}_{0,\vt}
    \end{bmatrix}
    =
    \Cov
    \begin{bmatrix}
    - (\D_\vt \coeffstiff_{\vt}) \vec{E}_{0,0} + (\D_\vt \coeffmass_{\vt}) \vec{E}_{0,0} \lambda_0 \\
    \div (\D_\vt \coeffmass_{\vt}) \vec{E}_{0,0} \\
    0 \\
    \diag \limits_{i=1,\ldots,m}\frac{1}{2}\Big( (\D_\vt \coeffmass_{\vt}) [\vec{E}_{0,0}]_i, [\vec{E}_{0,0}]_i \Big)_{\domain_0}
    \end{bmatrix},
    \end{aligned} \label{eq:cov_eq}
\end{equation}
where the right-hand side exists due to \cref{lem:excov}.
We note that, in contrast to \cite{DE2024}, $\D_\vt \coeffstiff_{\vt}$ and $\D_\vt \coeffmass_{\vt}$ are correlated, as they both depend on the random deformation field $\maptil_{\vt}$, cf.~\eqref{eq:ddt_CA}. Also, we note that the numerical solution of \eqref{eq:cov_eq} is well understood nowadays and we will comment on it later.

\section{Discretization} \label{sec:disc}

\subsection{Conforming Discretization of Maxwell's Eigenvalue Problem}
Starting from the problem formulation~\eqref{eq:MWErefdomain}, let us derive the discrete Maxwell eigenvalue problem. To this end, we pursue a Ritz-Galerkin discretization, but any other discretization method with similar properties could also be used. The weak formulation of the Maxwell eigenvalue problem reads: find $\lambda \in \mathbb{R}$ and $\vec{E}_{0,\vt} \in \HzerocurlDzero$, such that
\begin{equation}\label{eq:randeigref:disc}
    \Big(\coeffstiff_t(\omega)\curl  \vec{E}_{0,\vt}(\omega), \curl \vec{\fembasis}\Big)_{\domain_0} = {\lambda}_{0,t}(\omega)\Big(\coeffmass_t(\omega)\vec{E}_{0,\vt}(\omega),\vec{\fembasis}\Big)_{\domain_0} \quad \forall\vec{\fembasis} \in \HzerocurlDzero.
\end{equation}
Choosing an $N$-dimensional subspace $\vec{W}\subset\HzerocurlDzero$, $\vec{W}=\operatorname{span}_{i=1,\ldots,N}\{\vec{\fembasis}_i\}$, for discretization of the electromagnetic field, a Galerkin discretization of \eqref{eq:randeigref:disc} yields a discrete eigenvalue problem
\begin{equation}\label{eq:disceigprob}
\stiff\big(\coeffstiff_\vt(\omega)\big)\vec{e}_{0,\vt}(\omega)={\lambda}_{0,\vt}^h(\omega)\mass\big(\coeffmass_\vt(\omega)\big)\vec{e}_{0,\vt}(\omega),
\end{equation}
where $\vec{e}_{0,\vt}(\omega)\in\mathbb{R}^N$,
\begin{equation}
    \vec{E}_{0,\vt}^h(\omega) = \sum_j \big[\vec{e}_{0,\vt}(\omega)\big]_{j} \vec{\fembasis}_j\in\vec{W},
    \label{eq:weak_evp}
\end{equation}
and
\begin{align*}
    \stiff\big(\coeffstiff_\vt(\omega)\big)
    =& \Big[\Big(\coeffstiff_{\vt}(\omega) \curl \vec{\fembasis}_j, \curl \vec{\fembasis}_i\Big)_{\domain_0}\Big]_{i,j=1}^N,\\
    \mass\big(\coeffmass_\vt(\omega)\big)
    =& \Big[\Big(\coeffmass_{\vt}(\omega)  \vec{\fembasis}_j,  \vec{\fembasis}_i\Big)_{\domain_0}\Big]_{i,j=1}^N.
\end{align*}
\begin{remark}
The choice of appropriate subspaces is well understood, see, e.g., \cite{monk_2003}. For our numerical examples, we will choose the isogeometric analysis discretization using B-spline spaces, which we will introduce in \cref{sec:computation}.
\end{remark}

\subsection{Discretization of the Derivatives of Maxwell's Eigenvalue Problem}

Given parameterized stiffness matrices $\stiff(\coeffstiff_{\vt})$ and mass matrices $\mass(\coeffmass_{\vt})$, the discrete form of the analytic derivatives \eqref{eq:der_eigpair} reads
\begin{align}
    \begin{bmatrix}
    \stiff(\coeffstiff_0) - {\lambda}_{0,0}^h \mass(\coeffmass_0) & -\mass(\coeffmass_0) \vec{e}_{0,0} \\
    -\vec{e}_{0,0}^\top \mass^\top (\coeffmass_0) & 
    \end{bmatrix}
    \begin{bmatrix}
    \D_\vt \vec{e}_{0,\vt} \\ \D_\vt \vec{\lambda}^h_{0,\vt}
    \end{bmatrix} 
    = 
    \begin{bmatrix}
    -\stiff(\D_\vt\coeffstiff_{\vt}) \vec{e}_{0,\vt} + {\lambda}_{0,0}^h \mass(\D_\vt\coeffmass_{\vt})\vec{e}_{0,\vt} \\
    \diag \limits_{i=1,\ldots,m} \frac{[\vec{e}_{0,\vt}]_i^\top \mass(\D_\vt\coeffmass_{\vt}) [\vec{e}_{0,\vt}]_i}{2}
    \end{bmatrix}. \label{eq:der_eigpair:disc}
\end{align}
As discussed in \cite{DE2024}, up to a consistency error which needs to be analysed, it is equivalent to first consider the derivatives of the continuous problem \eqref{eq:MWErefdomain} and discretize the saddle point problem or to consider the derivatives of \eqref{eq:disceigprob}. Unique solvability of this system of equations can be shown in complete analogy to the continuous case. We also note that the system matrix in \eqref{eq:der_eigpair:disc} is symmetric.

\subsection{Discretization of the Covariance Equation}
A discrete version of the covariance equation \eqref{eq:cov_eq} can be obtained by employing a Galerkin discretization of tensor product finite element spaces or, leading to the very same results, by tensorizing \eqref{eq:der_eigpair:disc} and taking the mean, to obtain
\begin{align}
\begin{aligned}
    \begin{bmatrix}
    \stiff(\coeffstiff_0) - {\lambda}^h_{0,0} \mass(\coeffmass_0) & -\mass(\coeffmass_0) \vec{e}_{0,0} \\
    -\vec{e}_{0,0}^\top \mass^\top(\coeffmass_0) & 
    \end{bmatrix}
    \Cov
    \begin{bmatrix}
    \D_\vt \vec{e}_{0,\vt} \\ \D_\vt \vec{\lambda}^h_{0,\vt}
    \end{bmatrix}
    &
    \begin{bmatrix}
    \stiff(\coeffstiff_0) - {\lambda}^h_{0,0} \mass(\coeffmass_0) & -\mass(\coeffmass_0) \vec{e}_{0,0} \\
    -\vec{e}_{0,0}^\top \mass^\top(\coeffmass_0) & 
    \end{bmatrix}\\
    &=
    \Cov
    \begin{bmatrix}
    -\stiff(\D_\vt \coeffstiff_{\vt}) \vec{e}_{0,0} + {\lambda}^h_{0,0} \mass(\D_\vt\coeffmass_{\vt})\vec{e}_{0,0} \\
    \diag \limits_{i=1,\ldots,m} \frac{[\vec{e}_{0,0}]_i^\top \mass(\D_\vt \coeffmass_{\vt}) [\vec{e}_{0,0}]_i}{2}
    \end{bmatrix} .
\end{aligned} \label{eq:cov_eq:disc}
\end{align}
Unfortunately, the solution of such a system of equations can be computationally infeasible since covariance matrices are generally dense, and their storage requirements grow prohibitively. To this end, the solution of such covariance equations has been the topic of many articles, see \cite{BG04b,DHS17,HPS12,HSS2008,vPS06} to mention a few. 
To keep the implementation simple, we outline in the next section a strategy which is based on a (possibly approximate) Karhunen-Lo\`eve type representation of the deformation field.

\section{Computational Considerations}\label{sec:computation}
\subsection{Closed-form Representations for Karhunen-Loève type Deformation Fields}\label{sec:KL}
In order to compute the derivatives of the deformation coefficients efficiently, we follow the domain mapping approach to model the transformation $\maptil_{\vt}$ in the form of 
\begin{align}\label{eq:specificdeformationfield}
    \maptil_{\vt}(\vec{x};\vec{\rand}) =  \vec{x} + t \vec{V}(\vec{x};\vec{\rand}) ,
\end{align}    
where $\vec{x} \in \domain_0$, with a sufficiently smooth deformation field $\vec{V}$ depending on $\vec{\rand}$.
For the purpose of this article, we follow a Karhunen-Lo\`eve type approach, i.e., we assume that the deformation field can be decomposed as
\begin{align} \label{eq:KLE:G}
    \vec{V}(\vec{x};\vec{z}) = \sum \limits_{i=1}^{\nKL} \rand_i \vec{V}_i(\vec{x})
\end{align}
and $\rand_i$ uniformly distributed i.i.d.\ random variables on $[-1,1]$. 
For $M=\infty$ we assume that the $\vec{V}_i$ are such that the sum converges absolutely in $C^{1}$, yielding a continuously differentiable vector field. This implies existence of its covariance function and that the deformation field $\maptil_\vt$ is continuously differentiable in $\vec{x}$ with $\partial \maptil_\vt$ smooth in $\vt$ as required in \cref{eq:deformation,eq:maptilass}. Let us remark that weaker but more technical assumptions can be made as long as all of the required derivatives exist.

The following lemma gives closed-form expressions for the parametric derivative of the deformation coefficients for this case, which happen to be again of Karhunen-Lo\`eve type.
\begin{lemma}
Let $\maptil_{\vt}$ be given through \eqref{eq:specificdeformationfield} and \eqref{eq:KLE:G}. Then the parametric derivatives of the deformation coefficients \eqref{eq:defcoeffC} and \eqref{eq:defcoeffA} are given through
\begin{equation*}
\D_\vt \coeffstiff_{\vt} 
=
-
\sum \limits_{i=1}^{\nKL} \rand_i \Big[ \D_\vt \coeff_{\vt} \Big]_i,
\qquad
\D_\vt \coeffmass_{\vt} 
=
\sum \limits_{i=1}^{\nKL} \rand_i \Big[ \D_\vt \coeff_{\vt} \Big]_i,
\end{equation*}
where
\begin{equation}\label{eq:B}
\Big[ \D_\vt \coeff_{\vt} \Big]_i
=
\tr(\partial \vec{V}_i ) 
    - \partial \vec{V}_i 
    - \partial \vec{V}_i ^\top.
\end{equation}
\end{lemma}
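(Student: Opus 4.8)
The plan is to reduce the assertion to the expressions for $\D_\vt\coeffstiff_\vt$ and $\D_\vt\coeffmass_\vt$ at the reference configuration that were already obtained in \eqref{eq:ddt_CA0} during the proof of \cref{lem:GtoCA}, and then to insert the concrete deformation field \eqref{eq:specificdeformationfield}--\eqref{eq:KLE:G}. The only input specific to this setting is the value of $\D_\vt\partial\maptil_\vt$: from \eqref{eq:specificdeformationfield} the spatial Jacobian is $\partial\maptil_\vt = \vec{I} + \vt\,\partial\vec{V}(\cdot\,;\vec{\rand})$, so that $\ddt\partial\maptil_\vt = \partial\vec{V}(\cdot\,;\vec{\rand})$ is already independent of $\vt$ and in particular $\D_\vt\partial\maptil_\vt = \partial\vec{V}(\cdot\,;\vec{\rand})$. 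Using \eqref{eq:KLE:G} and interchanging the spatial derivative with the (possibly infinite) sum, this becomes $\D_\vt\partial\maptil_\vt = \sum_{i=1}^{\nKL}\rand_i\,\partial\vec{V}_i$; note that this also shows $\D_\vt\maptil_\vt = \vec{V}(\cdot\,;\vec{\rand})$ is linear in $\vec{\rand}$, so the hypotheses of \cref{lem:GtoCA} are compatible with this model.

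Next I would substitute into \eqref{eq:ddt_CA0}. Since $\partial\maptil_0 = \vec{I}$ forces $\coeffstiff_0 = \coeffmass_0 = \vec{I}$, \eqref{eq:ddt_CA0} reads $\D_\vt\coeffstiff_\vt = -\tr(\D_\vt\partial\maptil_\vt)\vec{I} + (\D_\vt\partial\maptil_\vt)^\top + \D_\vt\partial\maptil_\vt$ and $\D_\vt\coeffmass_\vt = -\D_\vt\coeffstiff_\vt$. Plugging in $\D_\vt\partial\maptil_\vt = \sum_i\rand_i\,\partial\vec{V}_i$ and pulling the scalars $\rand_i$ and the sum through $\tr(\cdot)$ and $(\cdot)^\top$ by linearity gives
\[
\D_\vt\coeffstiff_\vt = \sum_{i=1}^{\nKL}\rand_i\bigl(-\tr(\partial\vec{V}_i)\vec{I} + \partial\vec{V}_i^\top + \partial\vec{V}_i\bigr) = -\sum_{i=1}^{\nKL}\rand_i\,\bigl[\D_\vt\coeff_\vt\bigr]_i ,
\]
with $\bigl[\D_\vt\coeff_\vt\bigr]_i = \tr(\partial\vec{V}_i)\vec{I} - \partial\vec{V}_i - \partial\vec{V}_i^\top$ as in \eqref{eq:B}, and consequently $\D_\vt\coeffmass_\vt = -\D_\vt\coeffstiff_\vt = \sum_{i=1}^{\nKL}\rand_i\,\bigl[\D_\vt\coeff_\vt\bigr]_i$, which is the claim.

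I expect no real obstacle for finite $\nKL$: once \eqref{eq:ddt_CA0} is available, everything is elementary linear algebra. The single point requiring care is the case $\nKL = \infty$, where one must justify differentiating \eqref{eq:KLE:G} term by term in $\vec{x}$ and rearranging the resulting series; this is exactly what the assumed absolute convergence of \eqref{eq:KLE:G} in $C^1$ provides, and it also guarantees that the resulting series $\sum_i\rand_i\bigl[\D_\vt\coeff_\vt\bigr]_i$ converges uniformly, so that the closed-form Karhunen-Lo\`eve type expression on the right-hand side is well defined.
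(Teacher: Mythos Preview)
Your proposal is correct and follows essentially the same approach as the paper: compute $\D_\vt\partial\maptil_\vt = \sum_i \rand_i\,\partial\vec{V}_i$ from the specific form of the deformation field and insert it into \eqref{eq:ddt_CA0}. Your write-up is in fact more detailed than the paper's one-line proof, and your remark on the $\nKL=\infty$ case is a useful addition that the paper leaves implicit.
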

\begin{proof}
The specific form of $\maptil_{\vt}$ due to \eqref{eq:specificdeformationfield} and \eqref{eq:KLE:G} yields
\begin{equation*}
\D_\vt \partial \maptil_{\vt}
=
\sum \limits_{i=1}^{\nKL} \rand_i\partial \vec{V}_i,
\end{equation*}
which, inserted into \cref{eq:ddt_CA0} yields the assertion.
\end{proof}
Considering system of equations \eqref{eq:der_eigpair} we also observe that $\D_\vt \coeffstiff, \D_\vt \coeffmass$ are only plugged in on the right-hand side of the equation. Since \eqref{eq:der_eigpair} is linear, by plugging in $[\D_\vt \coeffstiff]_i, [\D_\vt \coeffmass]_i$ instead, we obtain derivatives $\big([\D_\vt \vec{\lambda}_{0,\vt}]_i,[\D_\vt \vec{E}_{0,\vt}]_i\big)$ and find that the first derivative of the eigenpair is of Karhunen-Lo\`eve type as well, i.e.,
\begin{align*}
    \D_\vt \vec{\lambda}_{0,\vt} 
    = \sum \limits_{i=1}^{\nKL} \rand_i \Big[\D_\vt \vec{\lambda}_{0,\vt}\Big]_i ,\qquad
    \D_\vt \vec{E}_{0,\vt} 
    = \sum \limits_{i=1}^{\nKL} \rand_i \Big[\D_\vt \vec{E}_{0,\vt}\Big]_i .
\end{align*}
Since the $z_i$ are stochastically independent, considering the expected value of the tensor product of the derivatives with themselves, we get a deterministic series representation
\begin{align}
    \Cov
    \begin{bmatrix}
    \D_\vt \vec{\lambda}_{0,\vt} \\ \D_{\vt} \vec{E}_{0,\vt}
    \end{bmatrix}
    &= \frac{1}{3}\sum \limits_{i=1}^{\nKL} 
    \begin{bmatrix}
    \D_\vt \vec{\lambda}_{0,\vt} \\ \D_\vt \vec{E}_{0,\vt}
    \end{bmatrix}_i
    \otimes
    \begin{bmatrix}
    \D_\vt \vec{\lambda}_{0,\vt} \\ \D_\vt \vec{E}_{0,\vt}
    \end{bmatrix}_i,\label{eq:KLE:tensor_lu}
\end{align}
i.e., we have a solution representation for \eqref{eq:cov_eq}. This way of construction transfers to the discrete case such that we have an explicit way to solve the covariance equation \eqref{eq:cov_eq:disc}. Since we observe in practice $M\ll\infty$ without significant loss in accuracy this solution approach is also efficient, but requires the computation of the $[\D_{\vt}\coeff_{\vt}]_i$-terms in the discrete setting.

\subsection{Geometry Description}
A common challenge when dealing with random domains is that the geometry perturbations need to be realized, e.g. for sampling.
This is not the case in our derivative-based approach. 
Here, the challenge is to determine the deformation coefficients, for which we need to compute the $[\D_{\vt}\coeff_{\vt}]_i$-terms given in \eqref{eq:B}. 
Only to validate our approach, we compare with a surrogate model which requires actual geometry deformations.
The framework of isogeometric analysis (IGA) \cite{Hughes_2005} has been successfully used for geometry deformations in the past for this purpose, see, e.g., \cite{DHJM2022,georg_uncertainty_2019,ziegler_mode_2022}, so we will also follow it here. 
To this end, for the geometry perturbations we use the same parametric framework as also used for the representation of geometries in computer-aided design.

For the geometry description itself, we start with the one-dimensional B-spline spaces of piecewise polynomial degree $p\geq 0$. For $k>p$, a \emph{locally quasi uniform $p$-open knot vector} is a tuple
$
\Xi = \big[{\xi_0 = \cdots =\xi_{p}}\leq
\cdots \leq{\xi_{k}=\cdots =\xi_{k+p}}\big]\in[0,1]^{k+p+1}
$
with $\xi_0 = 0$ and $\xi_{k+p}=1$
such that there exists a constant $\theta\geq 1$ with
 $\theta^{-1}\leq h_j\cdot h_{j+1}^{-1} \leq \theta$
 for all $p\leq j < k$, 
  where  $h_j=  \xi_{j+1}-\xi_{j}$.
The B-spline basis $ \lbrace B_{j,p} \rbrace_{0\leq j< k}$ is then recursively defined according to
\begin{align*}
B_{j,p}(x) & =\begin{cases}
\mathbbm{1}_{[\xi_j,\xi_{j+1})}&\text{ if }p=0,\\
\frac{x-\xi_j}{\xi_{j+p}-\xi_j}b_j^{p-1}(x) +\frac{\xi_{j+p+1}-x}{\xi_{j+p+1}-\xi_{j+1}}b_{j+1}^{p-1}(x) & \text{ else,}
\end{cases}
\end{align*}
where $\mathbbm{1}_A$ refers to the indicator function of the set $A$. 
For further concepts and algorithmic realization of B-splines, we refer to \cite{de-Boor_2001aa} and the references therein.

We assume the usual isogeometric setting for the domain representation of the physical reference domain, i.e., denoting $\square=[0,1]^3$, we assume that the reference domain can be decomposed into several smooth \emph{patches}
\[
\domain_0=\bigcup_{i=1}^M\domain_0^{(i)}.
\]
Therein, the intersection $\domain_0^{(i)}\cap \domain_0^{(j)}$ consists at most of a common vertex, a common edge, or a common face for $i\neq j$. In particular, each patch is given as $\domain_0^{(i)} = \mapnurbs(\square)$, where $\mapnurbs$ is an invertible \emph{NURBS mapping} $\mapnurbs\colon\square\to \domain_0^{(i)}$ given through
\[
\mapnurbs(x,y,z)
=
\sum_{i_1=0}^{k_1}
\sum_{i_2=0}^{k_2}
\sum_{i_3=0}^{k_3}
\frac{B_{i_1,p_1}(x)B_{i_2,p_2}(y)B_{i_3,p_3}(z)w_{i_1,i_2,i_3}}{\sum_{j_1=0}^{k_1}\sum_{j_2=0}^{k_2}\sum_{j_3=0}^{k_3}B_{j_1,p_1}(x)B_{j_2,p_2}(y)B_{j_3,p_3}(z)w_{j_1,j_2,j_3}}\bfP_{i_1,i_2,i_3},
\]
for \emph{control points} $\bfP_{i_1,i_2,i_3}\in\bbR^3$ and \emph{weights} $w_{i_1,i_2,i_3}\in\bbR$.

\subsection{Geometry Deformation and Spline Spaces}
In most cases, the geometry deformation as given through \eqref{eq:specificdeformationfield} and \eqref{eq:KLE:G} will not be available in closed-form representations. Instead, the $\vec{V}_i$ will need to be approximated numerically, for which we need to define suitable finite-dimensional function spaces on $\partial\domain_0$. To this end, we define the spline spaces spanned by the one-dimensional B-splines $\mathbb{S}^p(\Xi)=\operatorname{span}(\lbrace B_{j,p}\rbrace_{j <k})$ constructed from a given knot vector $\Xi$. Spline spaces in higher spatial dimensions are defined through tensor-product constructions, i.e., for a tuple of knot vectors $\boldsymbol{\Xi} =(\Xi_1,\Xi_2,\Xi_3)$ and polynomial degrees $\bfp=(p_1,p_2,p_3)$, we define
\[
\mathbb{S}_0^{\bfp}(\boldsymbol{\Xi})= \mathbb{S}^{p_1}(\Xi_1)\otimes\mathbb{S}^{p_2}(\Xi_2)\otimes\mathbb{S}^{p_3}(\Xi_3).
\]
One of the requirements to obtain a reasonably deformed domain is that the deformation field is at least continuous. Thus, given a tuple of knot vectors $\boldsymbol{\Xi}$ and polynomial degrees $\bfp$, a natural choice for the discretization of the perturbation field $\vec{V}$ is thus given by the vector valued spline space
\[
\pmb{\mathbb{S}}_0^{\bfp}(\domain_0)
=
\big[\mathbb{S}_0^{\bfp}(\domain_0)\big]^3
\]
where
\[
\mathbb{S}_0^{\bfp}(\domain_0)
=
\left\lbrace \mapf\in
C(\domain_0)\colon \mapf\circ \mapnurbs\in \mathbb{S}_0^{\bfp}(\boldsymbol{\Xi})
\text{ for }1\leq i\leq M\right\rbrace.
\]
Of course, the knot vectors and polynomial degrees could vary in each
component and on each patch, but for better readability we opt for using the
same knots and degrees. For remarks on the approximation property of these spaces we refer to \cite{BDK+2020}.
An efficient approach to compute a discrete approximation to a
Karhunen-Lo\`eve expansion with vanishing mean and prescribed
covariance was described in \cite{DHJM2022}.

Given an approximation of \eqref{eq:KLE:G} in $\pmb{\mathbb{S}}_0^{\bfp}(\domain_0)$  we can represent the deformed random domains $\domain_{\vt}(\omega)=\maptil_{\vt}(\vec{x},\vec{\rand}(\omega))$ as $\domain_{\vt}=\cup_{i=1}^M\domain_{\vt}^{(i)}(\omega)$, where $\domain_{\vt}^{(i)}(\omega) = \mapnurbs(\square,\omega)$ and 
\[
\mapnurbs(\square,\omega)
=
\maptil_{\vt}(\mapnurbs(\square),\vec{\rand}(\omega))
=
\mapnurbs(\square)
+
\vt\sum \limits_{i=1}^{\nKL} \rand_i \vec{V}_i(\mapnurbs(\square))
\]
With $\mapnurbs(\cdot)$ being a NURBS mapping, $\vec{V}_i(\mapnurbs(\cdot))$ being a B-spline space, and the sum of these being a NURBS mapping, we note that $\mapnurbs(\cdot,\omega)$ is again a NURBS mapping. As such, the random domains $\domain_{\vt}(\omega)$ can be dealt with in the same isogeometric framework as $\domain_0$.
We remark that, despite small deformations being the primary interest of this work, this modelling approach also permits for large shape deformations, see, e.g., \cite{DHJM2022,DHM2023,georg_uncertainty_2019,ziegler_mode_2022}.

\subsection{Isogeometric discretization of Maxwell's eigenvalue problem}
For the isogeometric $\HzerocurlDtomega$-conforming discretization of the Maxwell eigenvalue problem we follow \cite{Buffa_2011aa} to define
\begin{align*}
\pmb{\mathbb{S}}_1^{\bfp}(\boldsymbol{\Xi})
&=
\mathbb{S}^{p_1-1}(\Xi_1')\otimes\mathbb{S}^{p_2}(\Xi_2)\otimes\mathbb{S}^{p_3}(\Xi_3)\\
&\qquad\times
\mathbb{S}^{p_1}(\Xi_1)\otimes\mathbb{S}^{p_2-1}(\Xi_2')\otimes\mathbb{S}^{p_3}(\Xi_3)\\
&\qquad\times
\mathbb{S}^{p_1}(\Xi_1)\otimes\mathbb{S}^{p_2}(\Xi_2)\otimes\mathbb{S}^{p_3-1}(\Xi_3'),
\end{align*}
where $\Xi_i'$, $i=1,2,3$, denotes the truncated knot vector $\Xi_i$, i.e., $\Xi_i$ without the first and the last element. This allows to define $\HzerocurlDtomega$-conforming spaces by
\[
\pmb{\mathbb{S}}_1^{\bfp}(\domain_0)
=
\left\lbrace \mapf\in\HzerocurlDtomega\colon \mapf\circ \mapnurbs\in \mathbb{S}_1^{\bfp}(\boldsymbol{\Xi})
\text{ for }1\leq i\leq M\right\rbrace.
\]
For approximation properties of this space we refer to \cite{BDK+2020}.

\section{Numerical Examples} \label{sec:num_examples}
\subsection{Example Problem: Shape Deformation of a TESLA Cavity} \label{sec:num_example_setup}

We demonstrate our results investigating the shape deformation of the 9-cell TESLA cavity, where the cell centers are misaligned with respect to the ideal axis cavity. We focus on the issue of misaligned cells with respect to the ideal cavity axis, which are inherently introduced during the production processes of the dumbbells and at the welding at the equators, see \cref{fig:tesla_manufacturing}.
\def\diameterlengthratio{0.1991}
\def\arrowheight{0.15}
\def\weldingarrow{-\arrowheight}
\def\eqradius{0.1033}
\def\shift{0.4}
\newcommand{\dbcol}{black!20}
\newcommand{\dbcoll}{black!20}
\newcommand{\egcol}{teal!30}

\def\centerone{-0.4620}
\def\centertwo{-0.3467}
\def\centerthree{-0.2313}
\def\centerfour{-0.1159}
\def\centerfive{-0.0005}
\def\centersix{0.1149}
\def\centerseven{0.2303}
\def\centereight{0.3457}
\def\centernine{0.4610}
\def\minleft{-0.5180}
\def\maxright{0.5180}

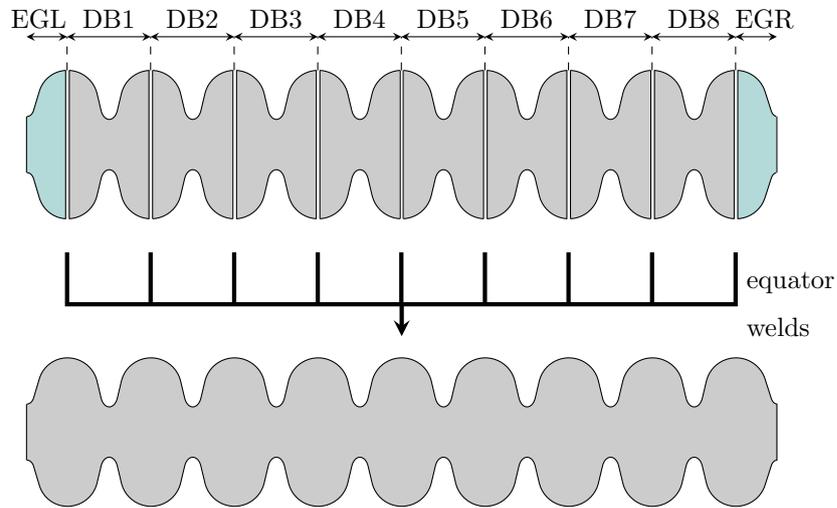
\begin{figure}[htbp]
    \centering
\hspace{-5em}\begin{tikzpicture}
\begin{axis}[
        axis line style={draw=none},
      tick style={draw=none},
      yticklabels={,,},
      xticklabels={,,},
       width = 0.9\textwidth,
       axis equal,
]

\draw[black, stealth-stealth] (axis cs:\minleft,\arrowheight) --  (axis cs:\centerone,\arrowheight) node[pos = 0.3, anchor = south] {EGL};
\draw[black, stealth-stealth] (axis cs:\centerone,\arrowheight) --  (axis cs:\centertwo,\arrowheight) node[midway, anchor = south] {DB1};
\draw[black, dashed] (axis cs: \centerone,\eqradius) --  (axis cs:\centerone,\arrowheight);
\draw[black, dashed] (axis cs: \centertwo,\eqradius) --  (axis cs:\centertwo,\arrowheight);
\draw[black, stealth-stealth] (axis cs:\centertwo,\arrowheight) --  (axis cs:\centerthree,\arrowheight) node[midway, anchor = south] {DB2};

\draw[black, dashed] (axis cs: \centerthree,\eqradius) --  (axis cs:\centerthree,\arrowheight);
\draw[black, stealth-stealth] (axis cs:\centerthree,\arrowheight) --  (axis cs:\centerfour,\arrowheight) node[midway, anchor = south] {DB3};

\draw[black, dashed] (axis cs: \centerfour,\eqradius) --  (axis cs:\centerfour,\arrowheight);
\draw[black, stealth-stealth] (axis cs:\centerfour,\arrowheight) --  (axis cs:\centerfive,\arrowheight) node[midway, anchor = south] {DB4};

\draw[black, dashed] (axis cs: \centerfive,\eqradius) --  (axis cs:\centerfive,\arrowheight);
\draw[black, stealth-stealth] (axis cs:\centerfive,\arrowheight) --  (axis cs:\centersix,\arrowheight) node[midway, anchor = south] {DB5};

\draw[black, dashed] (axis cs: \centersix,\eqradius) --  (axis cs:\centersix,\arrowheight);
\draw[black, stealth-stealth] (axis cs:\centersix,\arrowheight) --  (axis cs:\centerseven,\arrowheight) node[midway, anchor = south] {DB6};

\draw[black, dashed] (axis cs: \centerseven,\eqradius) --  (axis cs:\centerseven,\arrowheight);
\draw[black, stealth-stealth] (axis cs:\centerseven,\arrowheight) --  (axis cs:\centereight,\arrowheight) node[midway, anchor = south] {DB7};

\draw[black, dashed] (axis cs: \centereight,\eqradius) --  (axis cs:\centereight,\arrowheight);
\draw[black, stealth-stealth] (axis cs:\centereight,\arrowheight) --  (axis cs:\centernine,\arrowheight) node[midway, anchor = south] {DB8};

\draw[black, dashed] (axis cs: \centernine,\eqradius) --  (axis cs:\centernine,\arrowheight);
\draw[black, stealth-stealth] (axis cs:\centernine,\arrowheight) --  (axis cs:\maxright,\arrowheight) node[pos = 0.7, anchor = south] {EGR};

\addplot[name path = eglA, color=black] table[x=z,y=y,col sep=comma, restrict x to domain = \minleft: \centerone]{data/tesla_geo_1.csv};
\addplot[name path = eglB, color=black] table[x=z,y=y,col sep=comma, restrict x to domain = \minleft: \centerone]{data/tesla_geo_2.csv};
\draw[black] (axis cs: \minleft,0.039) --  (axis cs:\minleft,-0.039);
\draw[black] (axis cs: -0.4640,\eqradius) --  (axis cs:-0.4640,-\eqradius);

\addplot[name path = db1A, color=black] table[x=z,y=y,col sep=comma, restrict x to domain = \centerone: \centertwo]{data/tesla_geo_1.csv};
\addplot[name path = db1B, color=black] table[x=z,y=y,col sep=comma, restrict x to domain = \centerone: \centertwo]{data/tesla_geo_2.csv};
\draw[black] (axis cs: -0.459,\eqradius) --  (axis cs:-0.459,-\eqradius);
\draw[black] (axis cs: -0.349,\eqradius) --  (axis cs:-0.349,-\eqradius);

\addplot[name path = db2A, color=black] table[x=z,y=y,col sep=comma, restrict x to domain = \centertwo: \centerthree]{data/tesla_geo_1.csv};
\addplot[name path = db2B, color=black] table[x=z,y=y,col sep=comma, restrict x to domain = \centertwo: \centerthree]{data/tesla_geo_2.csv};
\draw[black] (axis cs: -0.344,\eqradius) --  (axis cs:-0.344,-\eqradius);
\draw[black] (axis cs: -0.233,\eqradius) --  (axis cs:-0.233,-\eqradius);

\addplot[name path = db3A, color=black] table[x=z,y=y,col sep=comma, restrict x to domain = \centerthree: \centerfour]{data/tesla_geo_1.csv};
\addplot[name path = db3B, color=black] table[x=z,y=y,col sep=comma, restrict x to domain = \centerthree: \centerfour]{data/tesla_geo_2.csv};
\draw[black] (axis cs: -0.228,\eqradius) --  (axis cs:-0.228,-\eqradius);
\draw[black] (axis cs: -0.118,\eqradius) --  (axis cs:-0.118,-\eqradius);

\addplot[name path = db4A, color=black] table[x=z,y=y,col sep=comma, restrict x to domain = \centerfour: \centerfive]{data/tesla_geo_1.csv};
\addplot[name path = db4B, color=black] table[x=z,y=y,col sep=comma, restrict x to domain = \centerfour: \centerfive]{data/tesla_geo_2.csv};
\draw[black] (axis cs: -0.113,\eqradius) --  (axis cs:-0.113,-\eqradius);
\draw[black] (axis cs: -0.002,\eqradius) --  (axis cs:-0.002,-\eqradius);

\addplot[name path = db5A, color=black] table[x=z,y=y,col sep=comma, restrict x to domain = \centerfive: \centersix]{data/tesla_geo_1.csv};
\addplot[name path = db5B, color=black] table[x=z,y=y,col sep=comma, restrict x to domain = \centerfive: \centersix]{data/tesla_geo_2.csv};
\draw[black] (axis cs: 0.113,\eqradius) --  (axis cs:0.113,-\eqradius);
\draw[black] (axis cs: 0.002,\eqradius) --  (axis cs:0.002,-\eqradius);

\addplot[name path = db6A, color=black] table[x=z,y=y,col sep=comma, restrict x to domain = \centersix: \centerseven]{data/tesla_geo_1.csv};
\addplot[name path = db6B, color=black] table[x=z,y=y,col sep=comma, restrict x to domain = \centersix: \centerseven]{data/tesla_geo_2.csv};
\draw[black] (axis cs: 0.228,\eqradius) --  (axis cs:0.228,-\eqradius);
\draw[black] (axis cs: 0.118,\eqradius) --  (axis cs:0.118,-\eqradius);

\addplot[name path = db7A, color=black] table[x=z,y=y,col sep=comma, restrict x to domain = \centerseven: \centereight]{data/tesla_geo_1.csv};
\addplot[name path = db7B, color=black] table[x=z,y=y,col sep=comma, restrict x to domain = \centerseven: \centereight]{data/tesla_geo_2.csv};
\draw[black] (axis cs: 0.344,\eqradius) --  (axis cs:0.344,-\eqradius);
\draw[black] (axis cs: 0.233,\eqradius) --  (axis cs:0.233,-\eqradius);

\addplot[name path = db8A, color=black] table[x=z,y=y,col sep=comma, restrict x to domain = \centereight :   \centernine]{data/tesla_geo_1.csv};
\addplot[name path = db8B, color=black] table[x=z,y=y,col sep=comma, restrict x to domain = \centereight :   \centernine]{data/tesla_geo_2.csv};
\draw[black] (axis cs: 0.459,\eqradius) --  (axis cs:0.459,-\eqradius);
\draw[black] (axis cs: 0.349,\eqradius) --  (axis cs:0.349,-\eqradius);

\addplot[name path = egrA, color=black] table[x=z,y=y,col sep=comma, restrict x to domain = \centernine: \maxright]{data/tesla_geo_1.csv};
\addplot[name path = egrB, color=black] table[x=z,y=y,col sep=comma, restrict x to domain = \centernine: \maxright]{data/tesla_geo_2.csv};
\draw[black] (axis cs: 0.4640,\eqradius) --  (axis cs:0.4640,-\eqradius);
\draw[black] (axis cs: \maxright,0.039) --  (axis cs:\maxright,-0.039);

\addplot [\egcol] fill between [of = eglA and eglB, soft clip={domain=\minleft:\centerone}];
\addplot [\dbcoll] fill between [of = db1A and db1B, soft clip={domain=\centerone:\centertwo}];
\addplot [\dbcol] fill between [of = db2A and db2B, soft clip={domain=\centertwo:\centerthree}];
\addplot [\dbcoll] fill between [of = db3A and db3B, soft clip={domain=\centerthree: \centerfour}];
\addplot [\dbcol] fill between [of = db4A and db4B, soft clip={domain= \centerfour:\centerfive}];
\addplot [\dbcoll] fill between [of = db5A and db5B, soft clip={domain=\centerfive:\centersix}];
\addplot [\dbcol] fill between [of = db6A and db6B, soft clip={domain=\centersix:\centerseven}];
\addplot [\dbcoll] fill between [of = db7A and db7B, soft clip={domain=\centerseven:\centereight }];
\addplot [\dbcol] fill between [of = db8A and db8B, soft clip={domain=\centereight :\centernine}];
\addplot [\egcol] fill between [of = egrA and egrB, soft clip={domain=\centernine:\maxright}];

\addplot[name path = A, color=black] table[x=z,y expr=\thisrow{y} - \shift,col sep=comma]{data/tesla_geo_1.csv};
\addplot[name path = B, color=black] table[x=z,y expr=\thisrow{y} - \shift ,col sep=comma]{data/tesla_geo_2.csv};
\draw[black] (axis cs: \maxright,0.039-\shift) --  (axis cs:\maxright,-0.039-\shift);
\draw[black] (axis cs: \minleft,0.039-\shift) --  (axis cs:\minleft,-0.039-\shift);
\addplot [\dbcoll] fill between [of = A and B];

\draw[ultra thick] (axis cs:\centerone,\weldingarrow) |- (axis cs:0,-\shift/1.8);
\draw[ultra thick] (axis cs:\centernine,\weldingarrow) |- (axis cs:0,-\shift/1.8) node[pos = 0.5, anchor = west, align = left] {equator\\ welds};
\draw[ultra thick] (axis cs:\centertwo,\weldingarrow) -- (axis cs:\centertwo,-\shift/1.8);
\draw[ultra thick] (axis cs:\centerthree,\weldingarrow) -- (axis cs:\centerthree,-\shift/1.8);
\draw[ultra thick] (axis cs:\centerfour,\weldingarrow) -- (axis cs:\centerfour,-\shift/1.8);
\draw[ultra thick] (axis cs:\centerfive,\weldingarrow) -- (axis cs:\centerfive,-\shift/1.8);
\draw[ultra thick] (axis cs:\centersix,\weldingarrow) -- (axis cs:\centersix,-\shift/1.8);
\draw[ultra thick] (axis cs:\centerseven,\weldingarrow) -- (axis cs:\centerseven,-\shift/1.8);
\draw[ultra thick] (axis cs:\centereight,\weldingarrow) -- (axis cs:\centereight,-\shift/1.8);
\draw[ultra thick, -stealth] (axis cs:0,-\shift/1.8) -- (axis cs:0,-\shift/1.5);

\end{axis}
\end{tikzpicture}\hspace{-5em}
\vspace{-3em}

    \caption{The TESLA cavity and the manufacturing from dumbbells (DB) and endgroups (EGL = endgroup left, EGR = endgroup right).}
    \label{fig:tesla_manufacturing}
\end{figure}

For the shape deformations, we use a Karhunen-Lo\`eve decomposition with respect to the mean geometry (including the deviations from the nominal design) such that the random realizations have zero mean. The decomposition uses $\nKL = 7$ terms which was derived in \cite{georg_uncertainty_2019} from real-world data from the Deutsches Elektronen-Synchrotron (DESY) database~\cite{Yasar2013}. To this end, the authors represent the displacement of the nine cell centers in the transversal planes, i.e. in the $x$- and $y$-direction, while keeping the position along the longitudinal $z$-axis fixed. The deformation between the cell centers is smoothly interpolated by splines. 
The modes identified by the authors of~\cite{georg_uncertainty_2019} in the Karhunen-Lo\`eve decomposition, which describe the displacement of the central axis, are referred to as deformation modes. %
These should not be confused with the eigenmodes that we obtain by solving the eigenvalue problem in the cavity~\eqref{eq:weak_evp}.
The values of the displacement for each cell center in $x$- and $y$-direction for the mean geometry and each deformation mode are shown in~\cref{tab:displacements}.
A Matlab code for generating random realizations is provided in~\cite{Repo_Demo}.

\begin{table}[htb]
    \centering
    
    \begin{tabular}{l|r|rrrrrrr}
       &   & \multicolumn{7}{c}{Deformation modes}\\ 
       & mean & 1 & 2 & 3 & 4 & 5 & 6 & 7 \\\hline
       $x_1$ & -0.0203 & 0.0200 & -0.0022 & 0.0231 & 0.0377 & -0.0061 & -0.0370 & 0.0131 \\
       $x_2$ & -0.0297 & 0.0748 & -0.0172 & 0.0238 & 0.0524 & 0.0164 & -0.0358 & 0.0098 \\
       $x_3$ & 0.0012  & 0.0957 & -0.0198 & 0.0307 & 0.0593 & 0.0007 & -0.0099 & -0.0104 \\
    $x_4$ & 0.0193 & 0.1168 & -0.0264 & 0.0251 & 0.0360 & -0.0111 & 0.0234 & -0.0143 \\
    $x_5$ & 0.0046 & 0.1251 & -0.0309 & 0.0133 & 0.0030 & 0.0000 & 0.0343 & 0.0014 \\
    $x_6$ & 0.0074 & 0.1261 & -0.0359 & -0.0052 & -0.0310 & 0.0039 & 0.0225 & 0.0136 \\
    $x_7$ & 0.0003 & 0.1144 & -0.0343 & -0.0191 & -0.0517 & -0.0002 & -0.0072 & 0.0106 \\
    $x_8$ & 0.0013 & 0.0831 & -0.0260 & -0.0226 & -0.0504 & -0.0069 & -0.0329 & -0.0030 \\
    $x_9$ & 0.0313 & 0.0504 & -0.0153 & -0.0194 & -0.0343 & -0.0175 & -0.0386 & -0.0220 \\ \hline
    $y_1$ & 0.0246 & 0.0196 & 0.0301 & -0.0453 & 0.0120 & 0.0226 & 0.0048 & -0.0338 \\
    $y_2$ & 0.0430 & 0.0384 & 0.0594 & -0.0752 & 0.0181 & 0.0381 & -0.0016 & -0.0091 \\
    $y_3$ & 0.0495 & 0.0380 & 0.0912 & -0.0665 & 0.0193 & 0.0086 & 0.0002 & 0.0125 \\
    $y_4$ & 0.0406 & 0.0351 & 0.1118 & -0.0353 & 0.0140 & -0.0200 & 0.0006 & 0.0181 \\
    $y_5$ & 0.0053 & 0.0325 & 0.1199 & -0.0011 & 0.0044 & -0.0310 & 0.0016 & 0.0086 \\
    $y_5$ & -0.0349 & 0.0289 & 0.1208 & 0.0310 & -0.0112 & -0.0228 & 0.0013 & -0.0105 \\
    $y_7$ & -0.0551 & 0.0195 & 0.1090 & 0.0577 & -0.0221 & -0.0015 & -0.0010 & -0.0156 \\
    $y_8$ & -0.0582 & 0.0155 & 0.0773 & 0.0614 & -0.0272 & 0.0318 & -0.0043 & -0.0019 \\
    $y_9$ & -0.0292 & 0.0140 & 0.0426 & 0.0379 & -0.0226 & 0.0553 & -0.0034 & 0.0133 \\
    \end{tabular}
    \caption{Displacements of each cell center in the $x$- and $y$-direction for the mean geometry and each deformation mode as identified in~\cite{georg_uncertainty_2019}. 
    The data are given in \si{mm}.}
    \label{tab:displacements}
\end{table}

The deformation of the mean geometry, as well as the deformation modes, are illustrated in \cref{fig:xy_deformations}, where for each deformation mode, the deformed cavity axis is displayed in two planes, the $yz$- and $xz$-plane, respectively and the original cell centers on the ideal cavity axis are marked with red crosses.
These axis shapes are obtained by spline interpolation of the individual cell center displacements as given in \cref{tab:displacements}.
The spline interpolation is also part of the Matlab code in~\cite{Repo_Demo}.
{\def\mywidth{\linewidth}
\def\myheight{0.5\linewidth}

\pgfplotsset{
    ymin = -9e-5,
    ymax = 1.5e-4,
    ytick = {-5e-5, 0, 5e-5, 1e-4, 1.5e-4},
    width=\mywidth, height=\myheight,
    xlabel={$z$ axis (\si{\meter})}, 
    grid=both,
	minor grid style={gray!25},
	major grid style={gray!25},
	ylabel={$x/y$ axis, resp.,  (\si{\meter})},
	ylabel style = {yshift=-0.6em},
	xlabel style = {yshift=.5em},
    }

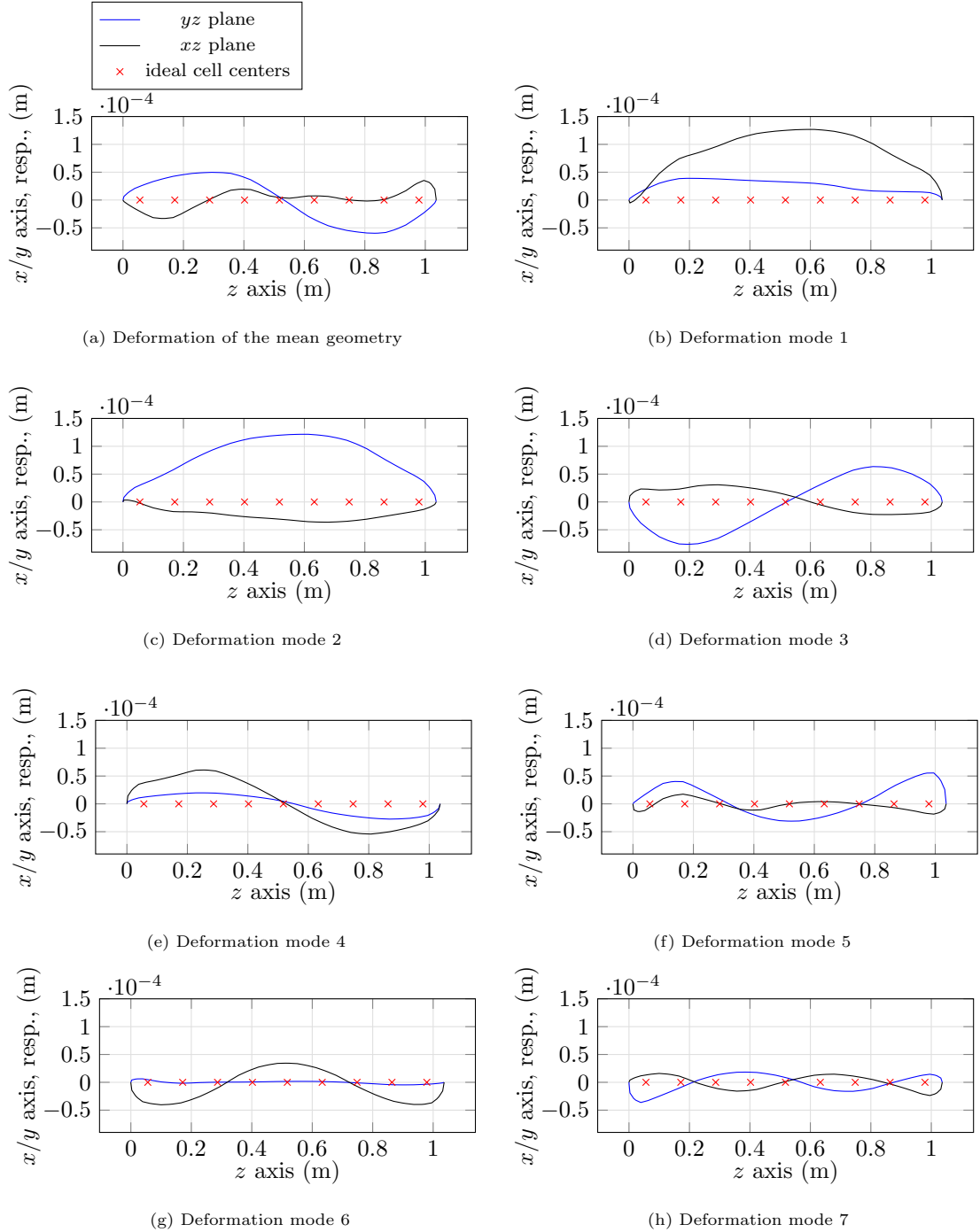
\begin{figure}
    \centering
\begin{subfigure}[b]{0.48\textwidth}
\begin{tikzpicture}
\begin{axis}[
	legend style={font=\footnotesize, at={(0,1.2)},anchor=south west},
	]
\addplot[color=blue] table[x=z,y=y,col sep=comma]{data/Def_mu_yz.csv};
\addplot[color=black] table[x=z,y=x,col sep=comma]{data/Def_mu_xz.csv};
\addplot[color = red, only marks, mark = x] table[x=z,y=axis,col sep=comma]{data/cellcenters.csv};
\legend{$yz$ plane, $xz$ plane, ideal cell centers}
\end{axis}
\end{tikzpicture}%
\caption{Deformation of the mean geometry}
\end{subfigure}
~
\begin{subfigure}[b]{0.48\textwidth}
\begin{tikzpicture}
\begin{axis}%
\addplot[color=blue] table[x=z,y=y,col sep=comma]{data/Def_mode_1_yz.csv};
\addplot[color=black] table[x=z,y=x,col sep=comma]{data/Def_mode_1_xz.csv};
\addplot[color = red, only marks, mark = x] table[x=z,y=axis,col sep=comma]{data/cellcenters.csv};
\end{axis}
\end{tikzpicture}%
\caption{Deformation mode 1}
\end{subfigure}
\vspace{1em}

\begin{subfigure}[t]{0.48\textwidth}
\begin{tikzpicture}
\begin{axis}
\addplot[color=blue] table[x=z,y=y,col sep=comma]{data/Def_mode_2_yz.csv};
\addplot[color=black] table[x=z,y=x,col sep=comma]{data/Def_mode_2_xz.csv};
\addplot[color = red, only marks, mark = x] table[x=z,y=axis,col sep=comma]{data/cellcenters.csv};
\end{axis}
\end{tikzpicture}%
\caption{Deformation mode 2}
\end{subfigure}
~
\begin{subfigure}[t]{0.48\textwidth}
\begin{tikzpicture}
\begin{axis}%
\addplot[color=blue] table[x=z,y=y,col sep=comma]{data/Def_mode_3_yz.csv};
\addplot[color=black] table[x=z,y=x,col sep=comma]{data/Def_mode_3_xz.csv};
\addplot[color = red, only marks, mark = x] table[x=z,y=axis,col sep=comma]{data/cellcenters.csv};
\end{axis}
\end{tikzpicture}%
\caption{Deformation mode 3}
\end{subfigure}
\vspace{1em}

\begin{subfigure}[t]{0.48\textwidth}
\begin{tikzpicture}
\begin{axis}%
\addplot[color=blue] table[x=z,y=y,col sep=comma]{data/Def_mode_4_yz.csv};
\addplot[color=black] table[x=z,y=x,col sep=comma]{data/Def_mode_4_xz.csv};
\addplot[color = red, only marks, mark = x] table[x=z,y=axis,col sep=comma]{data/cellcenters.csv};
\end{axis}
\end{tikzpicture}%
\caption{Deformation mode 4}
\end{subfigure}
~
\begin{subfigure}[t]{0.48\textwidth}
\begin{tikzpicture}
\begin{axis}%
\addplot[color=blue] table[x=z,y=y,col sep=comma]{data/Def_mode_5_yz.csv};
\addplot[color=black] table[x=z,y=x,col sep=comma]{data/Def_mode_5_xz.csv};
\addplot[color = red, only marks, mark = x] table[x=z,y=axis,col sep=comma]{data/cellcenters.csv};
\end{axis}
\end{tikzpicture}%
\caption{Deformation mode 5}
\end{subfigure}

\begin{subfigure}[t]{0.48\textwidth}
\begin{tikzpicture}
\begin{axis}%
\addplot[color=blue] table[x=z,y=y,col sep=comma]{data/Def_mode_6_yz.csv};
\addplot[color=black] table[x=z,y=x,col sep=comma]{data/Def_mode_6_xz.csv};
\addplot[color = red, only marks, mark = x] table[x=z,y=axis,col sep=comma]{data/cellcenters.csv};
\end{axis}
\end{tikzpicture}%
\caption{Deformation mode 6}
\end{subfigure}%
~
\begin{subfigure}[t]{0.48\textwidth}
\begin{tikzpicture}
\begin{axis}%
\addplot[color=blue] table[x=z,y=y,col sep=comma]{data/Def_mode_7_yz.csv};
\addplot[color=black] table[x=z,y=x,col sep=comma]{data/Def_mode_7_xz.csv};
\addplot[color = red, only marks, mark = x] table[x=z,y=axis,col sep=comma]{data/cellcenters.csv};
\end{axis}
\end{tikzpicture}%
\caption{Deformation mode 7}
\end{subfigure}
\caption{The mean deformation and the seven deformation modes as identified in the Karhunen-Loève decomposition displayed for the cavity axis. The red crosses mark the cell centers when the axis is ideal and undeformed. 
The blue and black lines indicate the deformed axes in the mean geometry and for the deformation modes. %
The blue lines correspond to the deformation seen in the $yz$ plane and the black lines to the deformation seen in the $xz$ plane.}
\label{fig:xy_deformations}
\end{figure}
}

\subsection{Computational Setup}
All computations are carried out in MATLAB\textsuperscript{\textregistered}, using the GeoPDEs \cite{Vazquez_2016aa} package for the discretization with IGA.
We perform all computations on the Bonna cluster hosted by the University of Bonn with two Intel\textsuperscript{\textregistered} Xeon\textsuperscript{\textregistered} Gold 6130 CPUs with sixteen \SI{2.10}{\giga \hertz} cores each, hyper-threading enabled and \SI{187}{\giga \byte} RAM.

The 9-cell TESLA cavity with attached beampipes was discretized by splines of degree two, yielding $25,744$ degrees of freedom.
For the numerical examples we want to consider the 27 eigenpairs of smallest magnitude.
The first 9 of these are nondegenerate eigenpairs, while the next 9 eigenvalues are all degenerate with multiplicity $m=2$.
To evaluate the efficiency of our derivative-based approach, we compare it to a polynomial chaos expansion (PCE) using the UQLab software package (version 2.0) \cite{MS2014}. 
To this end, we choose 17 different perturbation amplitudes (i.e., $\vt = \{2^{-5}, 2^{-4.5}, 2^{-4},\ldots,2^2, 5, 6\}$), for each of which we calculate a PCE approximation of the mean and variance.
The sampling process is parallelized using MATLAB\textsuperscript{\textregistered} \texttt{parfor} loops.

\subsection{Verification of the perturbation approach}

For the calculation of the eigenpair derivatives we first solve for the unperturbed eigenvalue.
We then use the derivatives of the coefficients \eqref{eq:ddt_CA0} to calculate the derivatives of the eigenpairs with respect to each deformation mode using equation \eqref{eq:der_eigpair:disc}. Equation \eqref{eq:KLE:tensor_lu} then yields a representation of the covariances of the first derivatives.
These are then used to calculate the approximations of the variances of the eigenvalues as well as the variances of the associated modes.

To validate our perturbation approach, we compare our approximations against mean and variance of a surrogate PCE model in UQLab, choosing polynomial degree of $4$. The coefficients are obtained from $1,000$ Latin hypercube samples with the provided Least Angle Regression (LARS) method and the hyperbolic-norm of $q = 0.75$. 
For the eigenvalues the Euclidean norm of the error over $\vec{\lambda}$ is calculated, while for the eigenmodes we consider the $L^2$-norm of the error of the variances of the eigenspaces.
Due to the construction of the stochastic deformation, Lemma~\ref{lem:Gtoeig}, and \cref{lem:order4} we predict an order of convergence of two for the mean and four for the variance.
We plot the resulting errors of the eigenvalues and eigenspaces for the nine smallest eigenvalues, which are non-degenerate and the following eighteen eigenvalues which are degenerate with multiplicity $m = 2$. 
The results found in \cref{fig:convNondeg} and \cref{fig:convDeg} confirm the predicted order of convergence.

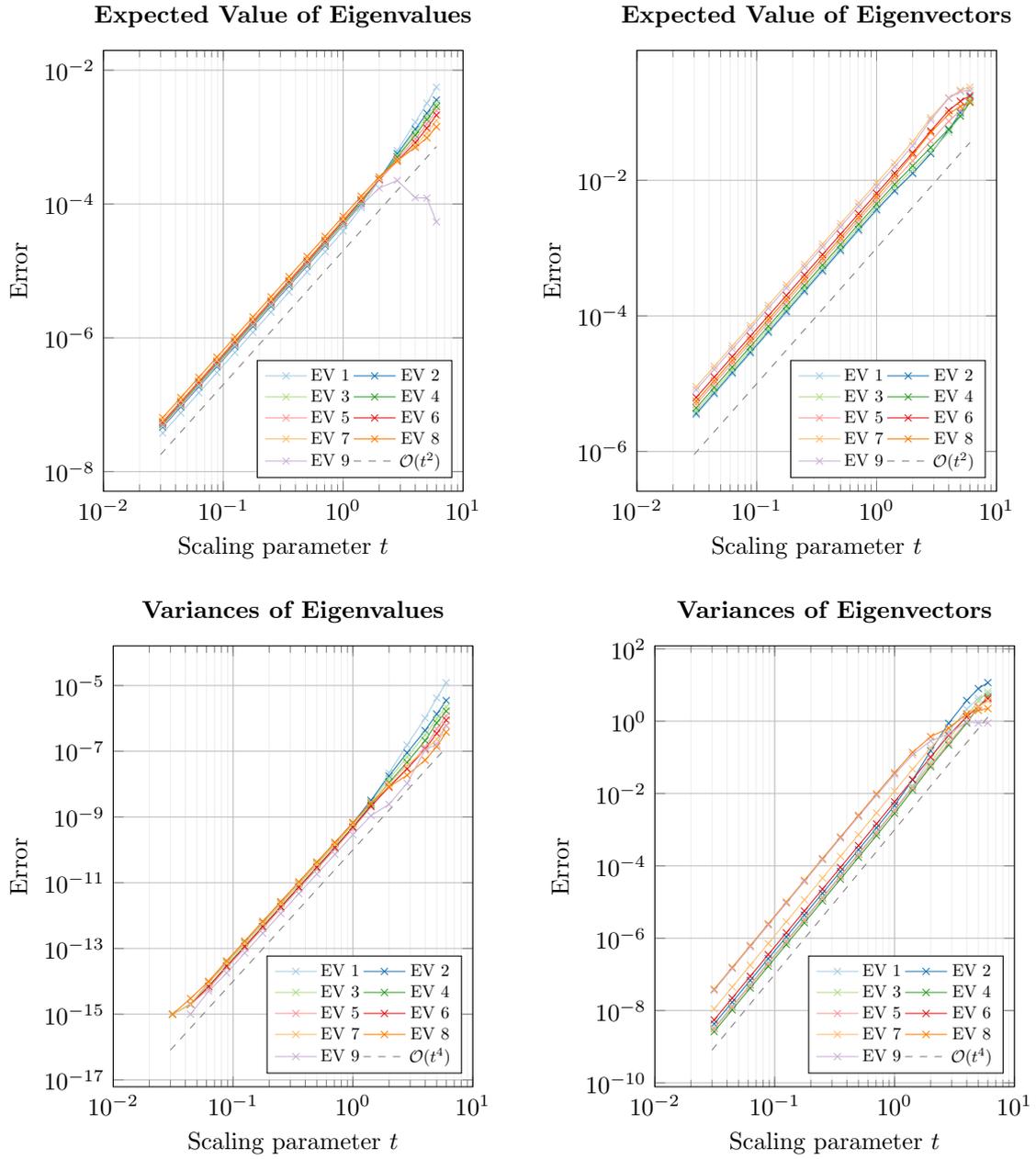
\begin{figure}
\centering
\begin{subfigure}[t]{0.48\textwidth}
\begin{tikzpicture}
\begin{axis}[
    xlabel={Scaling parameter $t$}, ylabel={Error}, %
	xminorticks=true, yminorticks=true, title style={font=\bfseries},
    xmin=0.01,
    xmax=10, 
    width = 0.94\textwidth, height = 0.35\textheight,
    grid = both, 
    major grid style = {lightgray}, minor grid style = {lightgray!25},
    ytick distance = 1e2,
    legend pos=south east,
    legend columns=2, 
    legend style={nodes={scale=0.7, transform shape}},
    ymode=log, xmode=log,
    title = {Expected Value of Eigenvalues},
    cycle list/Paired %
    ]

\pgfplotsforeachungrouped \i in {1,2,...,9}{
  \addplot +[mark = x] table[x=t,y=f,col sep=comma]{convDataPCE_plots9/E_Lambda_group\i.csv};
    \addlegendentry{EV\i}
}
\addplot [dashed, gray, domain=3e-2:6] {2E-5*x^2};

\legend{EV 1, EV 2, EV 3, EV 4, EV 5, EV 6, EV 7, EV 8, EV 9,  $\mathcal{O}(t^2)$};
\end{axis}
\end{tikzpicture}
\end{subfigure}\hfill
\begin{subfigure}[t]{0.48\textwidth}
\begin{tikzpicture}
\begin{axis}[
    xlabel={Scaling parameter $t$}, ylabel={Error}, %
	xminorticks=true, yminorticks=true, title style={font=\bfseries},
    xmin=0.01,
    xmax=10, 
     width = 0.94\textwidth, height = 0.35\textheight,
    grid = both, 
    major grid style = {lightgray}, minor grid style = {lightgray!25},
    ytick distance = 1e2,
    legend pos=south east,
    legend columns=2, 
    legend style={nodes={scale=0.7, transform shape}},
    ymode=log, xmode=log,
    title = {Expected Value of Eigenvectors},
    cycle list/Paired %
    ]

\pgfplotsforeachungrouped \i in {1,2,...,9}{
  \addplot +[mark = x] table[x=t,y=f,col sep=comma]{convDataPCE_plots9/E_u_group\i.csv};
}
\addplot [dashed, gray, domain=3e-2:6] {1E-3*x^2};

\legend{EV 1, EV 2, EV 3, EV 4, EV 5, EV 6, EV 7, EV 8, EV 9, $\mathcal{O}(t^2)$};
\end{axis}
\end{tikzpicture}
\end{subfigure} 
\vspace{1em} 

\begin{subfigure}[t]{0.48\textwidth}
\begin{tikzpicture}
\begin{axis}[
    xlabel={Scaling parameter $t$}, ylabel={Error}, %
	xminorticks=true, yminorticks=true, title style={font=\bfseries},
    xmin=0.01,
    xmax=10, 
     width = 0.94\textwidth, height = 0.35\textheight,
    grid = both, 
    major grid style = {lightgray}, minor grid style = {lightgray!25},
    ytick distance = 1e2,
    legend pos=south east,
    legend columns=2, 
    legend style={nodes={scale=0.7, transform shape}},
    ymode=log, xmode=log,
    title = {Variances of Eigenvalues},
    cycle list/Paired %
    ]

\pgfplotsforeachungrouped \i in {1,2,...,9}{
  \addplot +[mark = x] table[x=t,y=f,col sep=comma]{convDataPCE_plots9/Cov_Lambda_group\i.csv};
}
\addplot [dashed, gray, domain=3e-2:6] {1E-10*x^4};

\legend{EV 1, EV 2, EV 3, EV 4, EV 5, EV 6, EV 7, EV 8, EV 9, $\mathcal{O}(t^4)$};
\end{axis}
\end{tikzpicture}
\end{subfigure}\hfill
\begin{subfigure}[t]{0.48\textwidth}
\begin{tikzpicture}
\begin{axis}[
    xlabel={Scaling parameter $t$}, ylabel={Error}, %
	xminorticks=true, yminorticks=true, title style={font=\bfseries},
    xmin=0.01,
    xmax=10, 
     width = 0.94\textwidth, height = 0.35\textheight,
    grid = both, 
    major grid style = {lightgray}, minor grid style = {lightgray!25},
    ytick distance = 1e2,
    legend pos=south east,
    legend columns=2, 
    legend style={nodes={scale=0.7, transform shape}},
    ymode=log, xmode=log,
    title = {Variances of Eigenvectors},
    cycle list/Paired %
    ]

\pgfplotsforeachungrouped \i in {1,2,...,9}{
  \addplot +[mark = x] table[x=t,y=f,col sep=comma]{convDataPCE_plots9/Cov_u_group\i.csv};
}
\addplot [dashed, gray, domain=3e-2:6] {1E-3*x^4};

\legend{EV 1, EV 2, EV 3, EV 4, EV 5, EV 6, EV 7, EV 8, EV 9, $\mathcal{O}(t^4)$};
\end{axis}
\end{tikzpicture}
\end{subfigure}
\caption{Convergence of error between approximation and PCE estimate for mean and variance, computed for the first nine eigenmodes, belonging to eigenvalues of simple multiplicity.}
\label{fig:convNondeg}
\end{figure}

\begin{figure}
\centering
\begin{subfigure}[t]{0.48\textwidth}
\begin{tikzpicture}
\begin{axis}[
    xlabel={Scaling parameter $t$}, ylabel={Error}, %
	xminorticks=true, yminorticks=true, title style={font=\bfseries},
    xmin=0.01,
    xmax=10, 
    width = 0.94\textwidth, height = 0.35\textheight,
    grid = both, 
    major grid style = {lightgray}, minor grid style = {lightgray!25},
    ytick distance = 1e2,
    legend pos=north west,
    legend columns=2, 
    legend style={nodes={scale=0.7, transform shape}},
    ymode=log, xmode=log,
    title = {Expected Value of Eigenvalues},
    cycle list/Paired %
    ]

\pgfplotsforeachungrouped \i in {1,2,...,9}{
  \addplot +[mark = x] table[x=t,y=f,col sep=comma]{convDataPCE_plots18/E_Lambda_group\i.csv};
}
\addplot [dashed, gray, domain=3e-2:6] {5E-5*x^2};

\legend{ES 10, ES 11, ES 12, ES 13, ES 14, ES 15, ES 16, ES 17, ES 18,  $\mathcal{O}(t^2)$};
\end{axis}
\end{tikzpicture}
\end{subfigure}\hfill
\begin{subfigure}[t]{0.48\textwidth}
\begin{tikzpicture}
\begin{axis}[
    xlabel={Scaling parameter $t$}, ylabel={Error}, %
	xminorticks=true, yminorticks=true, title style={font=\bfseries},
    xmin=0.01,
    xmax=10, 
     width = 0.94\textwidth, height = 0.35\textheight,
    grid = both, 
    major grid style = {lightgray}, minor grid style = {lightgray!25},
    ytick distance = 1e2,
    legend pos=north west,
    legend columns=2, 
    legend style={nodes={scale=0.7, transform shape}},
    ymode=log, xmode=log,
    title = {Expected Value of Eigenspaces},
    cycle list/Paired %
    ]

\pgfplotsforeachungrouped \i in {1,2,...,9}{
  \addplot +[mark = x] table[x=t,y=f,col sep=comma]{convDataPCE_plots18/E_u_group\i.csv};
}
\addplot [dashed, gray, domain=3e-2:6] {1E-3*x^2};

\legend{ES 10, ES 11, ES 12, ES 13, ES 14, ES 15, ES 16, ES 17, ES 18, $\mathcal{O}(t^2)$};
\end{axis}
\end{tikzpicture}
\end{subfigure} 
\vspace{1em} 

\begin{subfigure}[t]{0.48\textwidth}
\begin{tikzpicture}
\begin{axis}[
    xlabel={Scaling parameter $t$}, ylabel={Error}, %
	xminorticks=true, yminorticks=true, title style={font=\bfseries},
    xmin=0.01,
    xmax=10, 
     width = 0.94\textwidth, height = 0.35\textheight,
    grid = both, 
    major grid style = {lightgray}, minor grid style = {lightgray!25},
    ytick distance = 1e2,
    legend pos=north west,
    legend columns=2, 
    legend style={nodes={scale=0.7, transform shape}},
    ymode=log, xmode=log,
    title = {Variances of Eigenvalues},
    cycle list/Paired %
    ]

\pgfplotsforeachungrouped \i in {1,2,...,9}{
  \addplot +[mark = x] table[x=t,y=f,col sep=comma]{convDataPCE_plots18/Cov_Lambda_group\i.csv};
}
\addplot [dashed, gray, domain=3e-2:6] {9E-10*x^4};

\legend{ES 10, ES 11, ES 12, ES 13, ES 14, ES 15, ES 16, ES 17, ES 18, $\mathcal{O}(t^4)$};
\end{axis}
\end{tikzpicture}
\end{subfigure}\hfill
\begin{subfigure}[t]{0.48\textwidth}
\begin{tikzpicture}
\begin{axis}[
    xlabel={Scaling parameter $t$}, ylabel={Error}, %
	xminorticks=true, yminorticks=true, title style={font=\bfseries},
    xmin=0.01,
    xmax=10, 
     width = 0.94\textwidth, height = 0.35\textheight,
    grid = both, 
    major grid style = {lightgray}, minor grid style = {lightgray!25},
    ytick distance = 1e2,
    legend pos=north west,
    legend columns=2, 
    legend style={nodes={scale=0.7, transform shape}},
    ymode=log, xmode=log,
    title = {Variances of Eigenspaces},
    cycle list/Paired %
    ]
\pgfplotsforeachungrouped \i in {1,2,...,9}{
  \addplot +[mark = x] table[x=t,y=f,col sep=comma]{convDataPCE_plots18/Cov_u_group\i.csv};
}
\addplot [dashed, gray, domain=3e-2:6] {1E-3*x^4};

\legend{ES 10, ES 11, ES 12, ES 13, ES 14, ES 15, ES 16, ES 17, ES 18, $\mathcal{O}(t^4)$};
\end{axis}
\end{tikzpicture}
\end{subfigure}
\caption{Convergence of error between approximation and PCE estimate for mean and variance, computed for the eigenspaces 10 to 18, each belonging to eigenvalues of multiplicity~2.}
\label{fig:convDeg}
\end{figure}
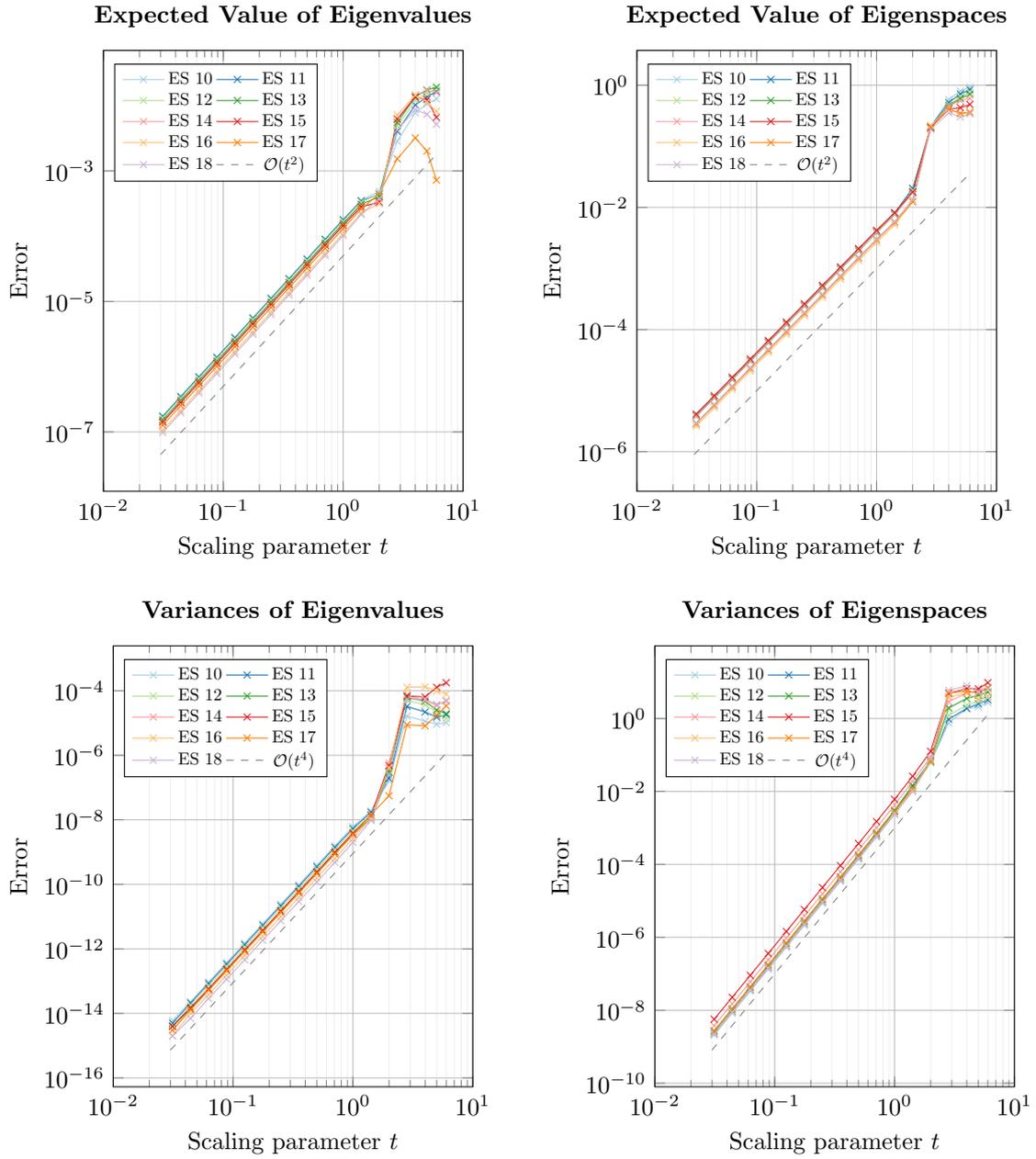

\subsection{Shape Uncertainty Quantification for TESLA Cavities}

The resulting variance maps of the first nine modes are shown in \cref{fig:modeE9var}. 
Based on these plots, we can make some observations: 
First of all, we note a strong connection between the variance and the distribution of the electric field strength, see \cref{fig:E-modes}.
When also considering the orientation of the lines indicating the variances, we note that the $z$-component is particularly prominent. 
This makes sense, also in regard to the first observation, as we can see in \cref{fig:modeE9_componentDef}, where we display the electric field strength, exemplary for the case of the accelerating mode.
Here, we plot the components and the magnitude of the electric field strength of the ninth mode.
Considering the transversal $x$- and $y$-components, we notice that they are close to zero in most parts of the domain and that this also does not change significantly when we deform the cavity.
In contrast, the longitudinal $z$-component, which has the characteristic alternating field pattern, changes vastly during the deformation.
Notably, the most significant field deformations occur in the center of the cells rather than in regions distant from the central axis.
These observations are clearly reflected in the variance map \cref{fig:var_emode9} of the accelerating mode.
We also observe that the variance of the accelerating mode in the outer cells is much larger than in the mid-cells. 
This also is in accordance with the fact that when deforming the cavity, the field strength of the accelerating mode changes most significantly in the outer cells, see \cref{fig:modeE9_componentDef}.

From the variance maps in \cref{fig:modeE9var}, we conclude, that the first and second eigenmode are the most sensitive to the eccentricity, as here, the variance attains the largest values.
Regarding the relevant accelerating mode in \cref{fig:var_emode9}, we observe the strongest variance in the two end-cells, i.e. the E-field, particularly the longitudinal component, is the most sensitive in these cells.
Since the particle bunch travels along the central axis, we focus on this area.
We observe, that in the inner cells, the variance along the central axis is comparatively small, i.e., the eccentricity does not have a major influence on the field. 
In contrast, the field in the two end-cells is more sensitive to our shape deformations, as we can see from the higher variance values, also around the central axis. This is plausible since the end-cells are known to be particularly sensitive, e.g. for the field flatness, see \cite{Shemelin2009}.

\begin{figure}
    \def\imgwidth{0.6\textwidth}
    \centering
    \begin{subfigure}[c]{\textwidth}\centering
    \includegraphics[trim = 100 280 100 280, clip, width = \imgwidth]{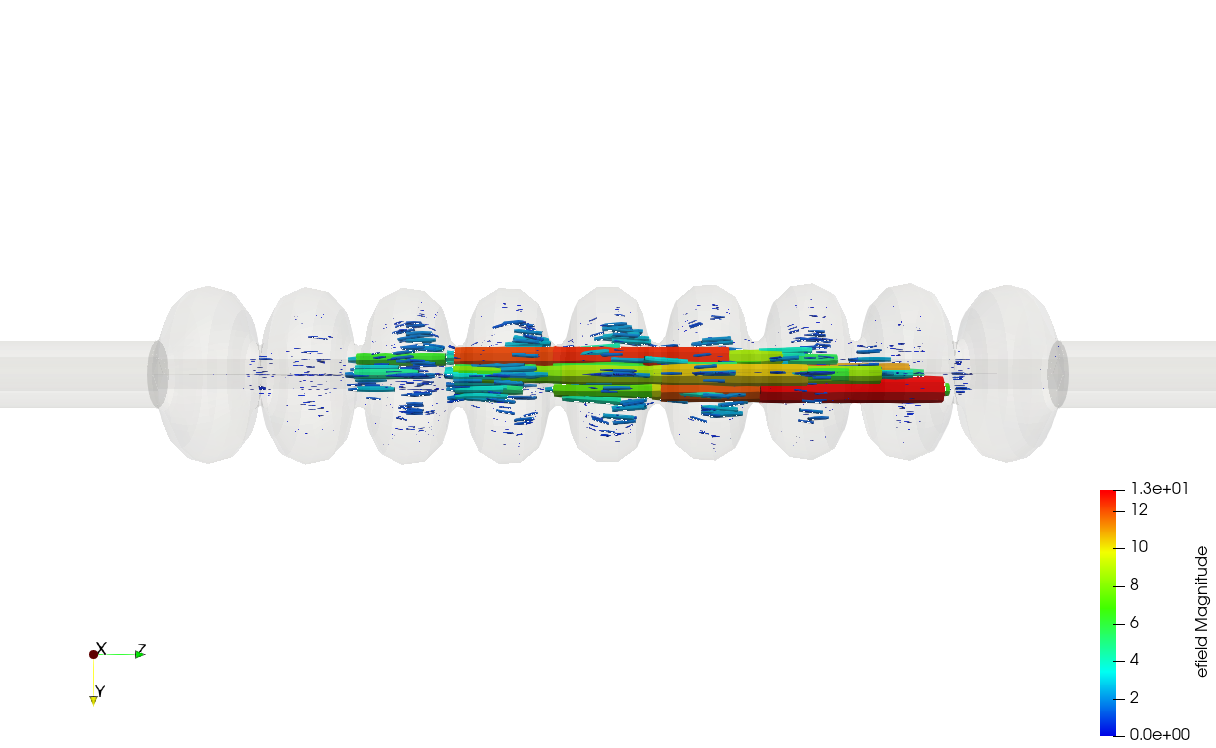}
    \caption{}
    \end{subfigure}\vspace{0.1em}

    \begin{subfigure}[c]{\textwidth}\centering
    \includegraphics[trim = 100 280 100 280, clip, width = \imgwidth]{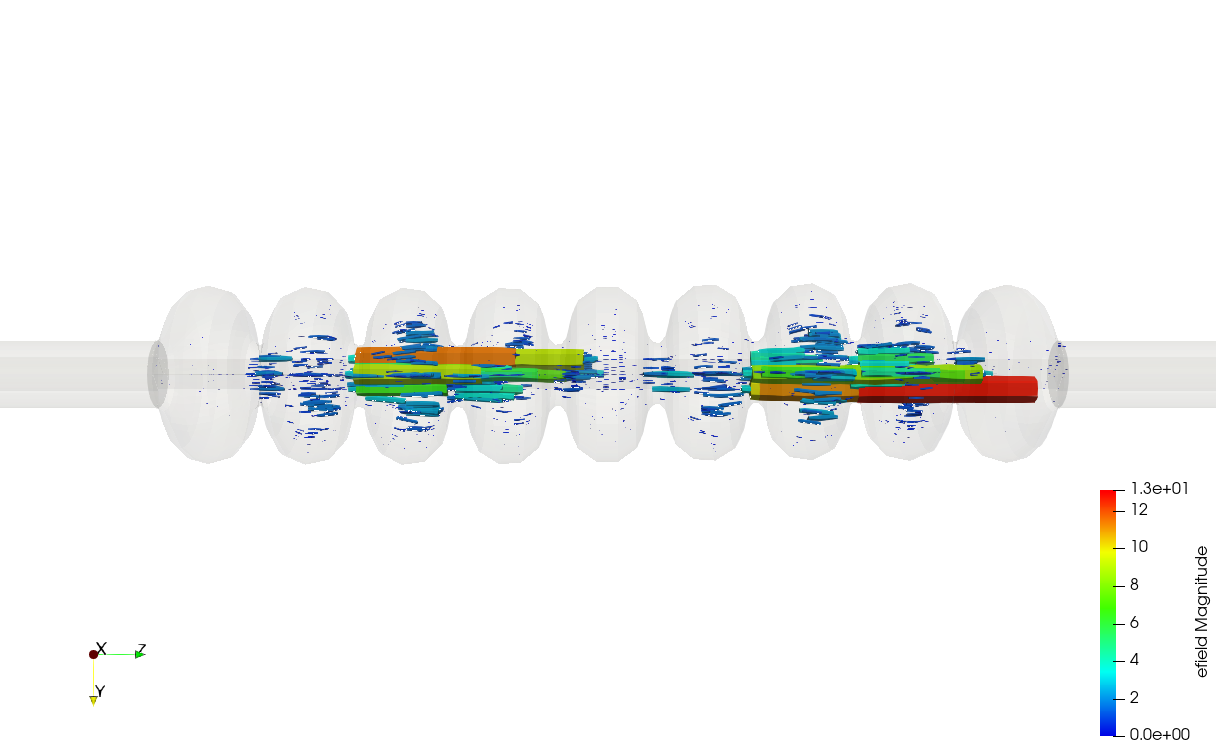}
    \caption{}
    \end{subfigure}\vspace{0.1em}

    \begin{subfigure}[c]{\textwidth}\centering
    \includegraphics[trim = 100 280 100 280, clip, width = \imgwidth]{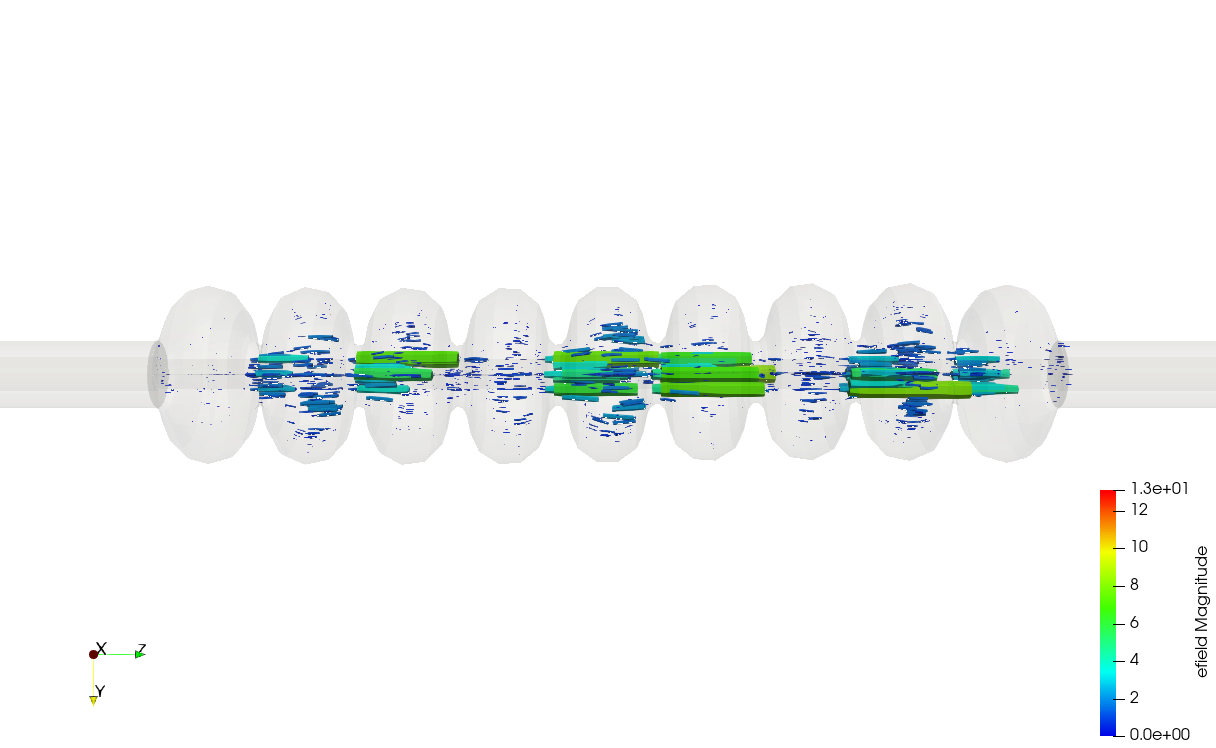}
    \caption{}
    \end{subfigure}\vspace{0.1em}

    \begin{subfigure}[c]{\textwidth}\centering
    \includegraphics[trim = 100 280 100 280, clip, width = \imgwidth]{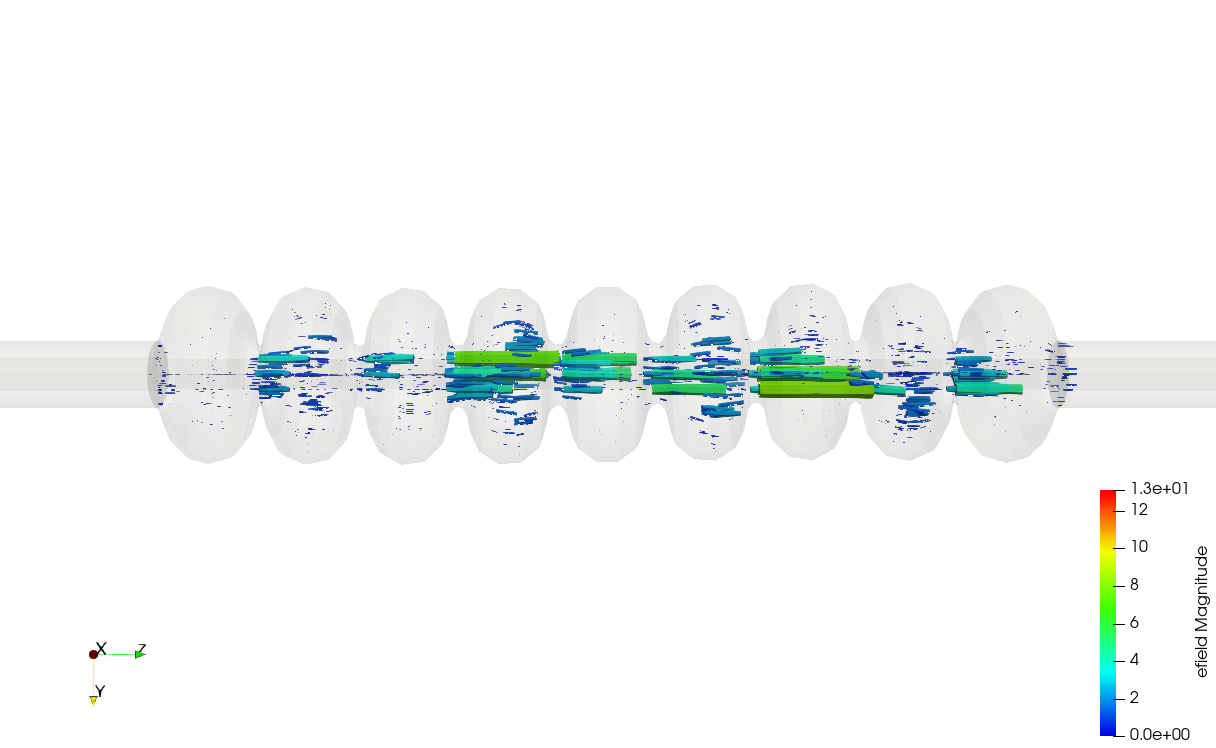}
    \caption{}
    \end{subfigure}\vspace{0.1em}

    \begin{subfigure}[c]{\textwidth}\centering
    \includegraphics[trim = 100 280 100 280, clip, width = \imgwidth]{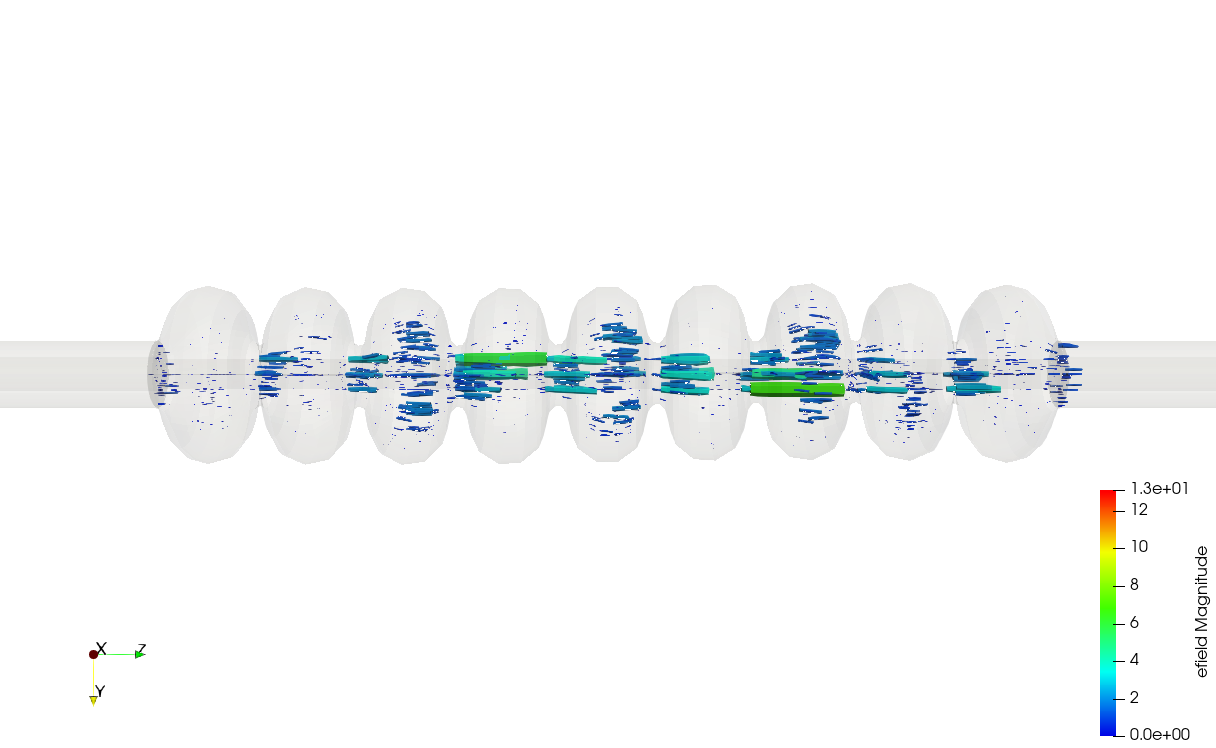}
    \caption{}
    \end{subfigure}\vspace{0.1em}

    \begin{subfigure}[c]{\textwidth}\centering
    \includegraphics[trim = 100 280 100 280, clip, width = \imgwidth]{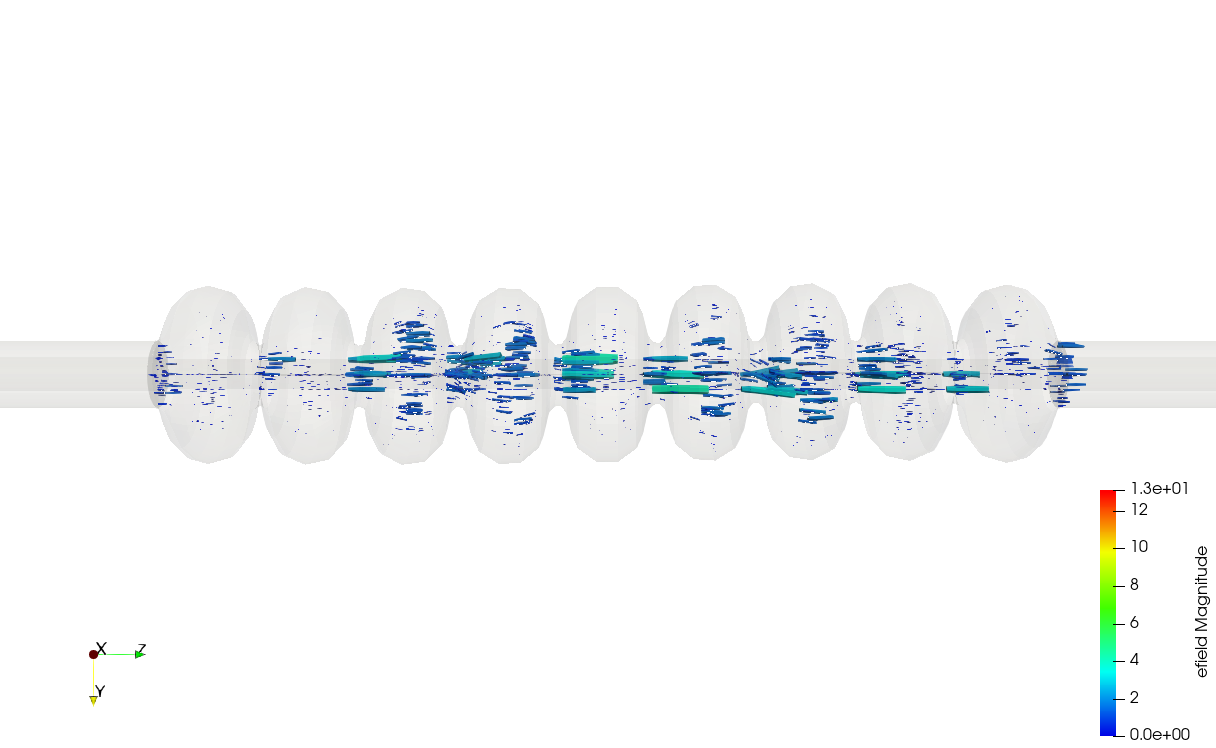}
    \caption{}
    \end{subfigure}\vspace{0.1em}

    \begin{subfigure}[c]{\textwidth}\centering
    \includegraphics[trim = 100 280 100 280, clip, width = \imgwidth]{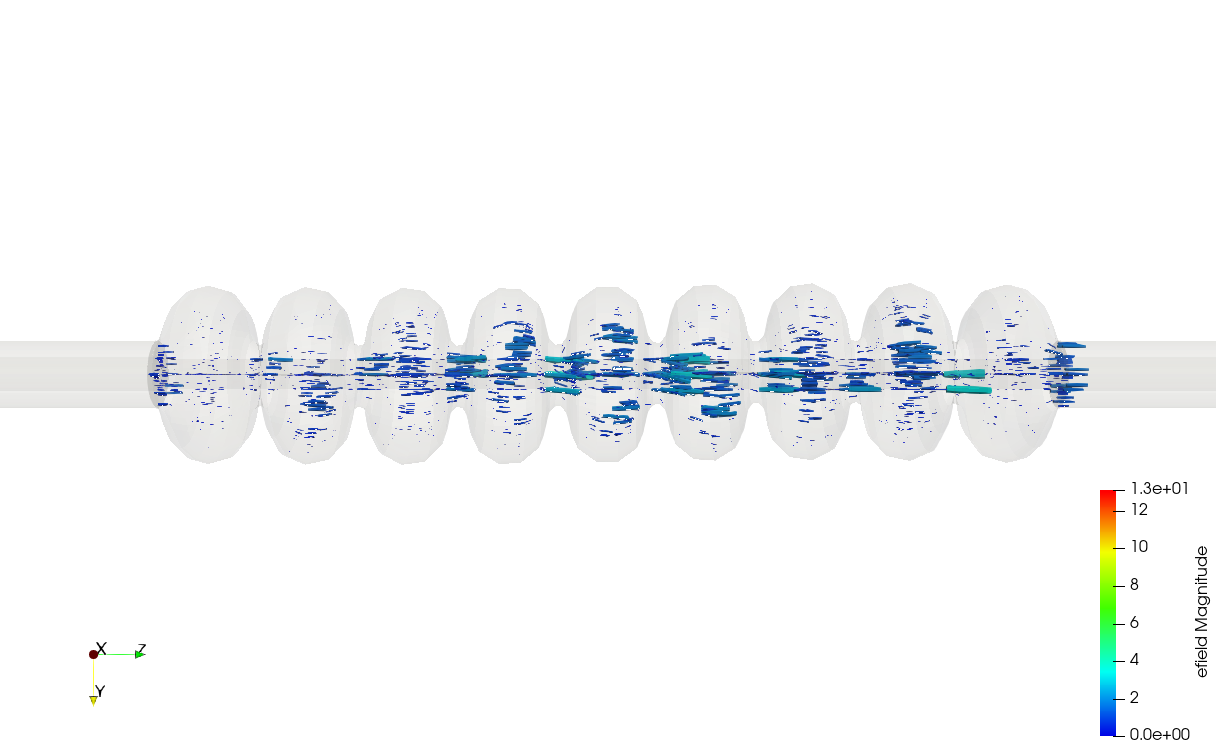}
    \caption{}
    \end{subfigure}\vspace{0.1em}

    \begin{subfigure}[c]{\textwidth}\centering
    \includegraphics[trim = 100 280 100 280, clip, width = \imgwidth]{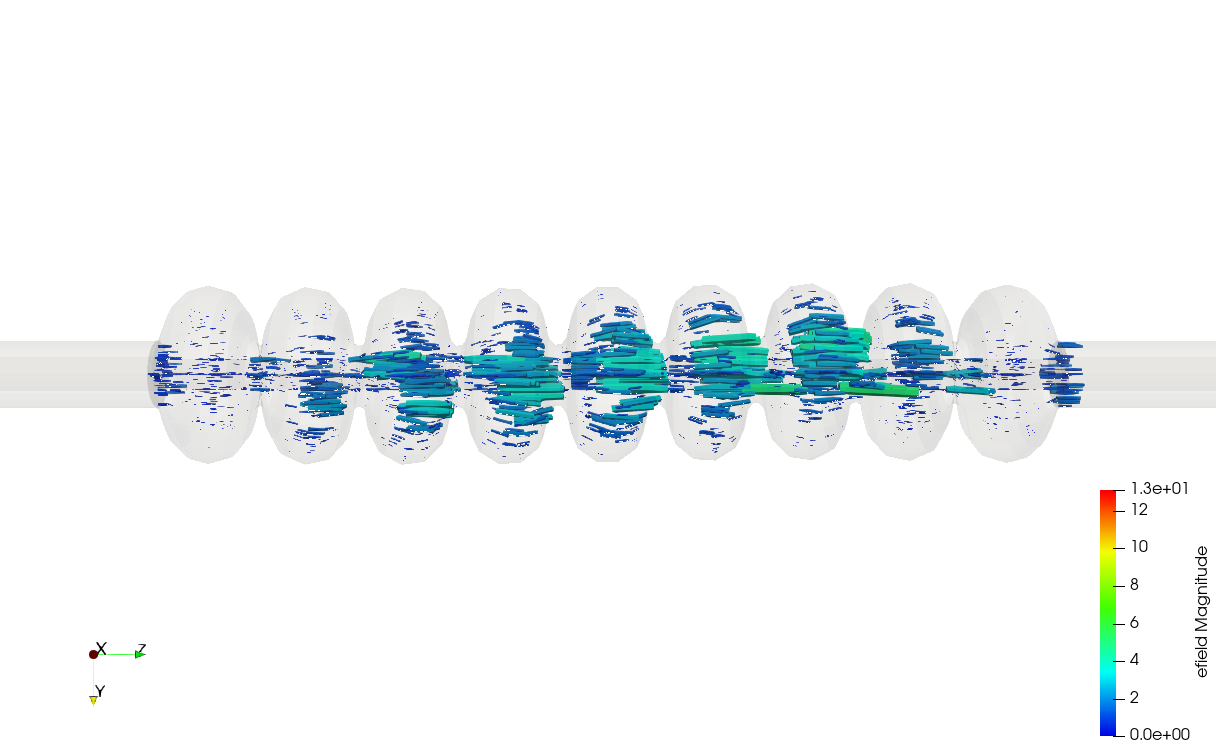}
    \caption{}
    \end{subfigure}\vspace{0.1em}

    \begin{subfigure}[c]{\textwidth}\centering
    \includegraphics[trim = 100 280 100 280, clip, width = \imgwidth]{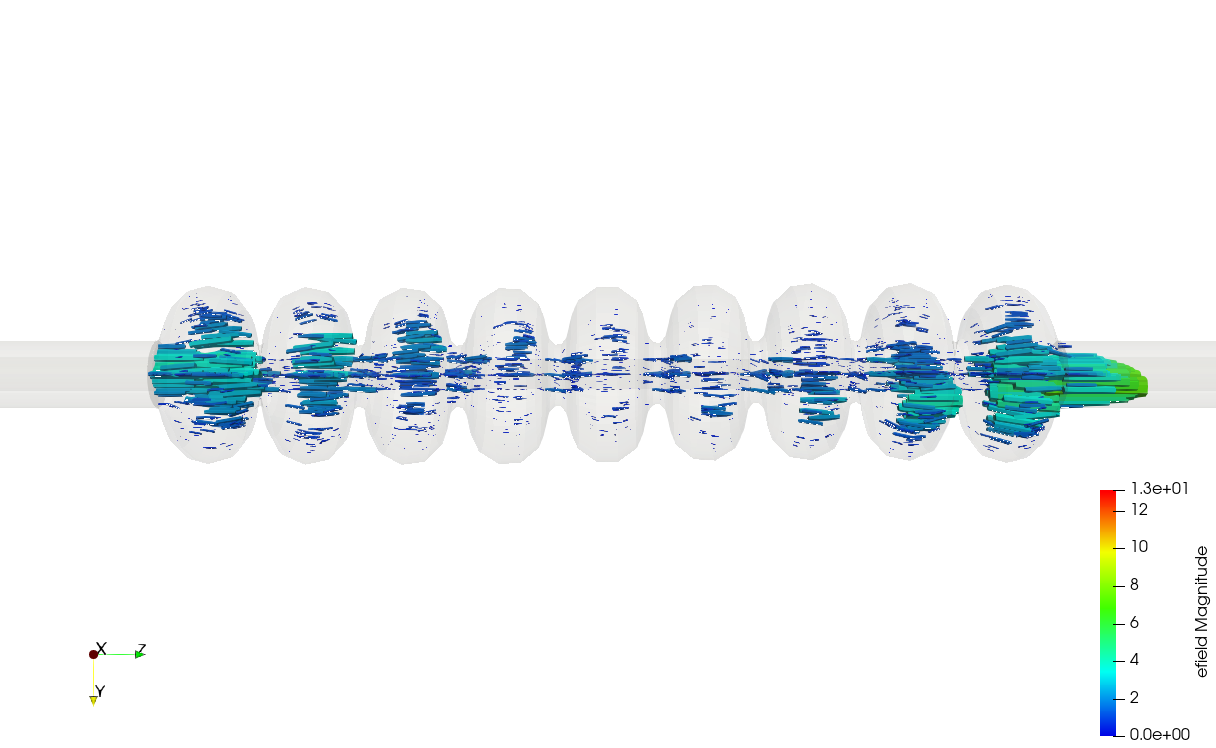}
    \caption{}
    \label{fig:var_emode9}
    \end{subfigure}\vspace{0.1em}
    
    \begin{tikzpicture}[overlay, xshift = 0.4\textwidth, yshift = 0.125\textwidth]
     \def\x{0.15}
     \def\h{-1.52}
     \def\d{2.99}
     \node[anchor=south] at (0,-1.65cm) {\includegraphics [width=0.4cm, height = 3cm] {fig/colorbar}};
    \foreach \y/\val in {0/0, 0.1538/2, 0.3077/4, 0.4615/6, 0.6154/8, 0.7692/10, 0.9231/12, 1/13} \draw (\x,\h+\d*\y)--(\x+0.1,\h+\d*\y) node [anchor=west] {\footnotesize $\val$};
     \node[] at (0.1,1.8) {\footnotesize $\Var[\vec{E}]$};%
\end{tikzpicture}

    \caption{Variance of the $\vec{E}$-field. %
    }
    \label{fig:modeE9var}
\end{figure}

\begin{figure}
    \def\imgwidth{0.48\textwidth}
    \centering
    
    \begin{subfigure}[c]{\imgwidth}%
     {\includegraphics[trim = 110 10 110 10, clip,  height = 0.2\textwidth]{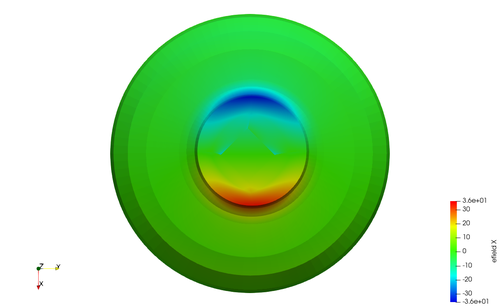}}
     \end{subfigure}~
    \begin{subfigure}[c]{\imgwidth}%
     {\includegraphics[trim = 110 10 110 10, clip,  height = 0.2\textwidth]{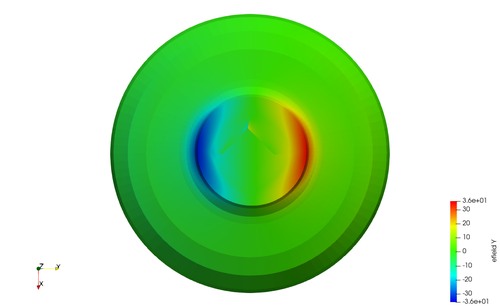}}
    \end{subfigure}

    \begin{subfigure}[c]{\imgwidth}\centering
     {\includegraphics[trim = 322 290 322 290, clip,  width = \textwidth]{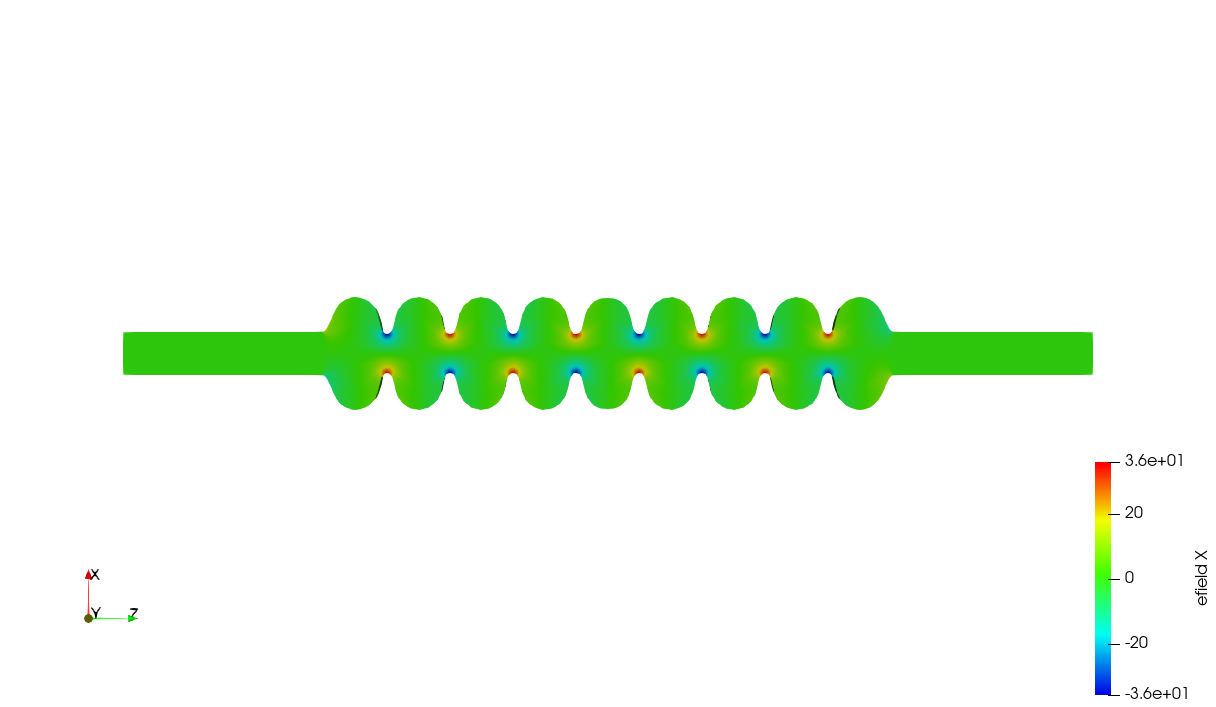}}
     \end{subfigure}~
    \begin{subfigure}[c]{\imgwidth}\centering
     {\includegraphics[trim = 322 290 322 290, clip,  width = \textwidth]{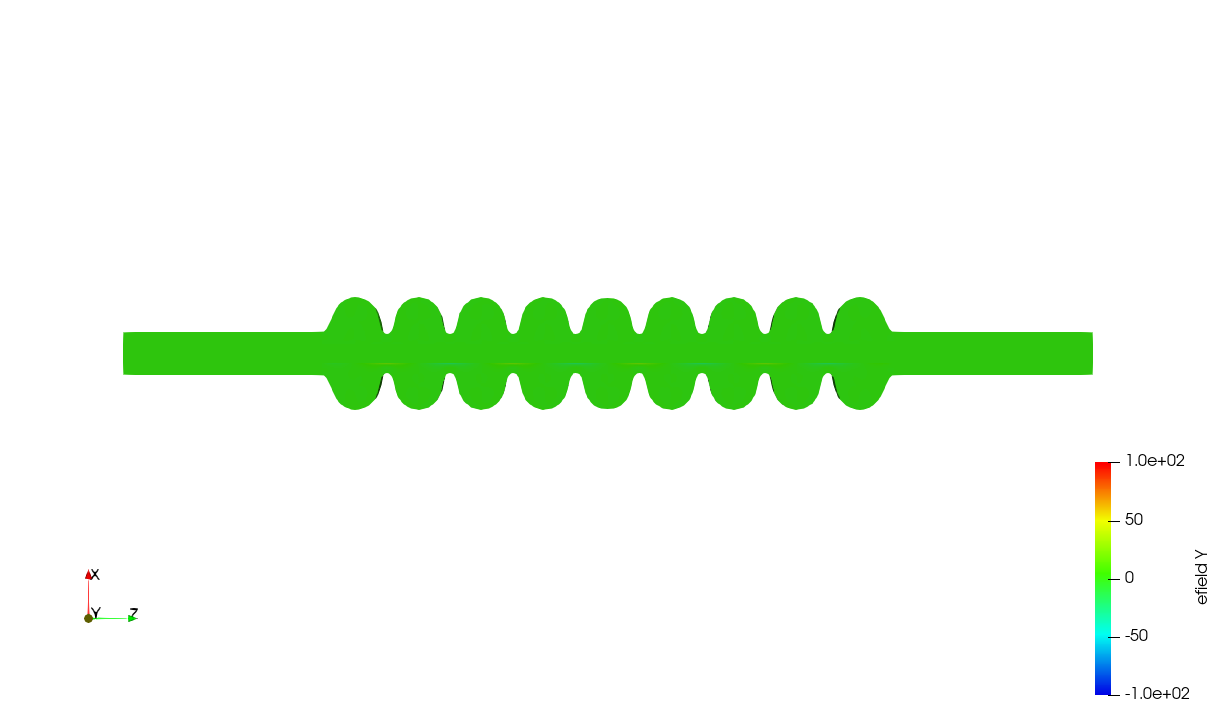}}
    \end{subfigure}
    
    \begin{subfigure}[c]{\imgwidth}\centering
     {\includegraphics[trim = 286 310 286 310, clip,  width = \textwidth]{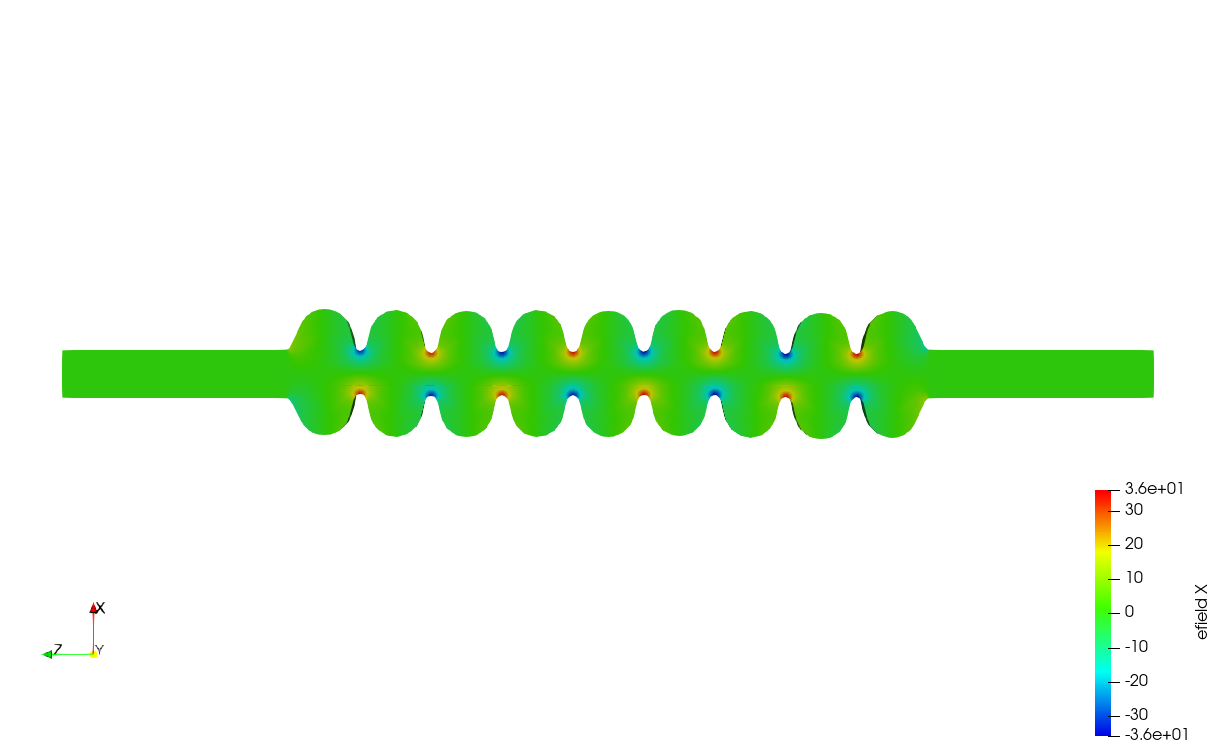}}
     \end{subfigure}~
    \begin{subfigure}[c]{\imgwidth}\centering
     {\includegraphics[trim = 286 310 286 310, clip, width = \textwidth]{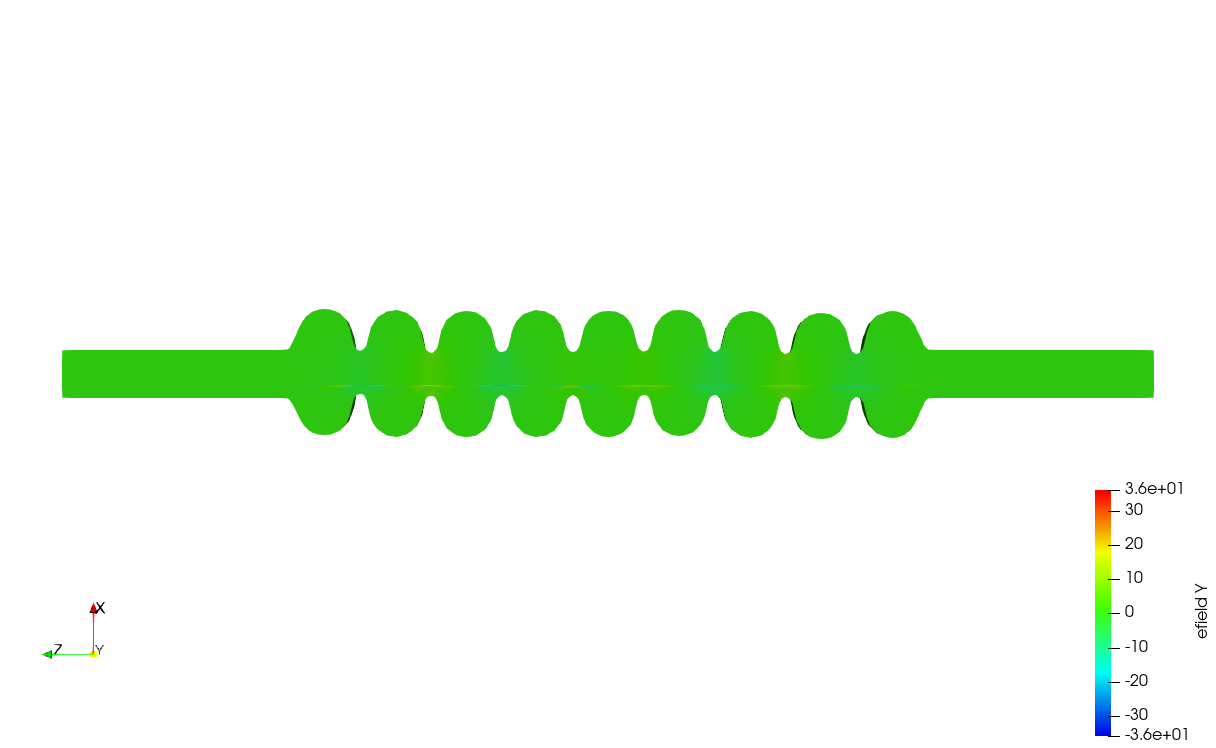}}
    \end{subfigure}
    
    \begin{subfigure}[c]{\imgwidth}\centering
     {\includegraphics[trim = 286 300 286 300, clip,  width = \textwidth]{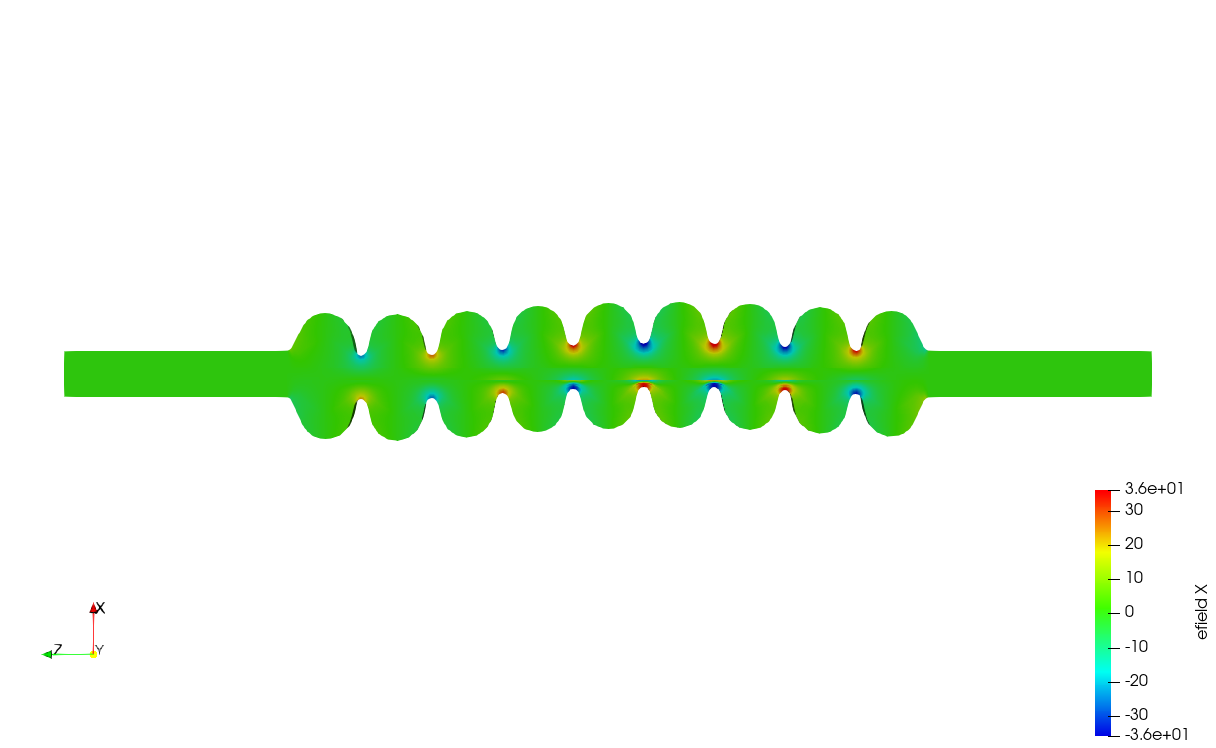}}
     \caption{$\vec{E}_x$, normalized to the $[-1,1]$-range}
     \end{subfigure}~
    \begin{subfigure}[c]{\imgwidth}\centering
     {\includegraphics[trim = 286 300 286 300, clip,  width = \textwidth]{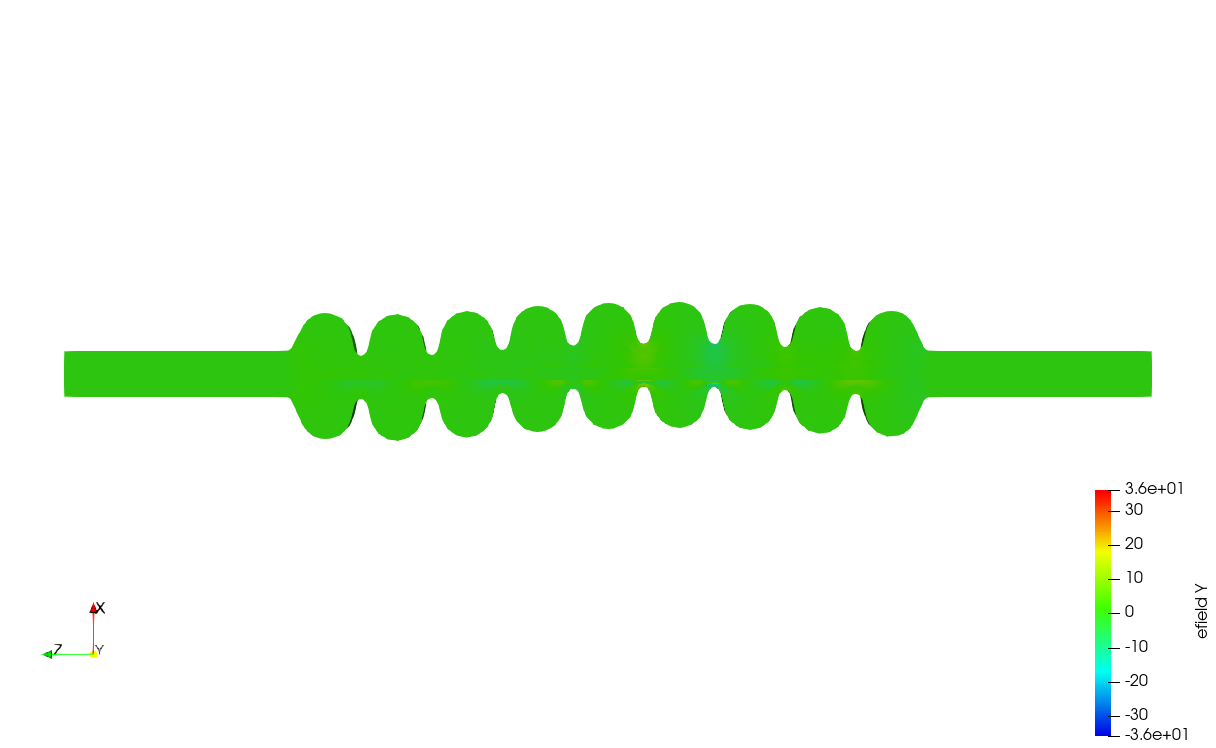}}
     \caption{$\vec{E}_y$, normalized to the $[-1,1]$-range}
    \end{subfigure}\vspace{1em}

    \begin{subfigure}[c]{\imgwidth}\centering
     {\includegraphics[trim = 292 310 292 310, clip,  width = \textwidth]{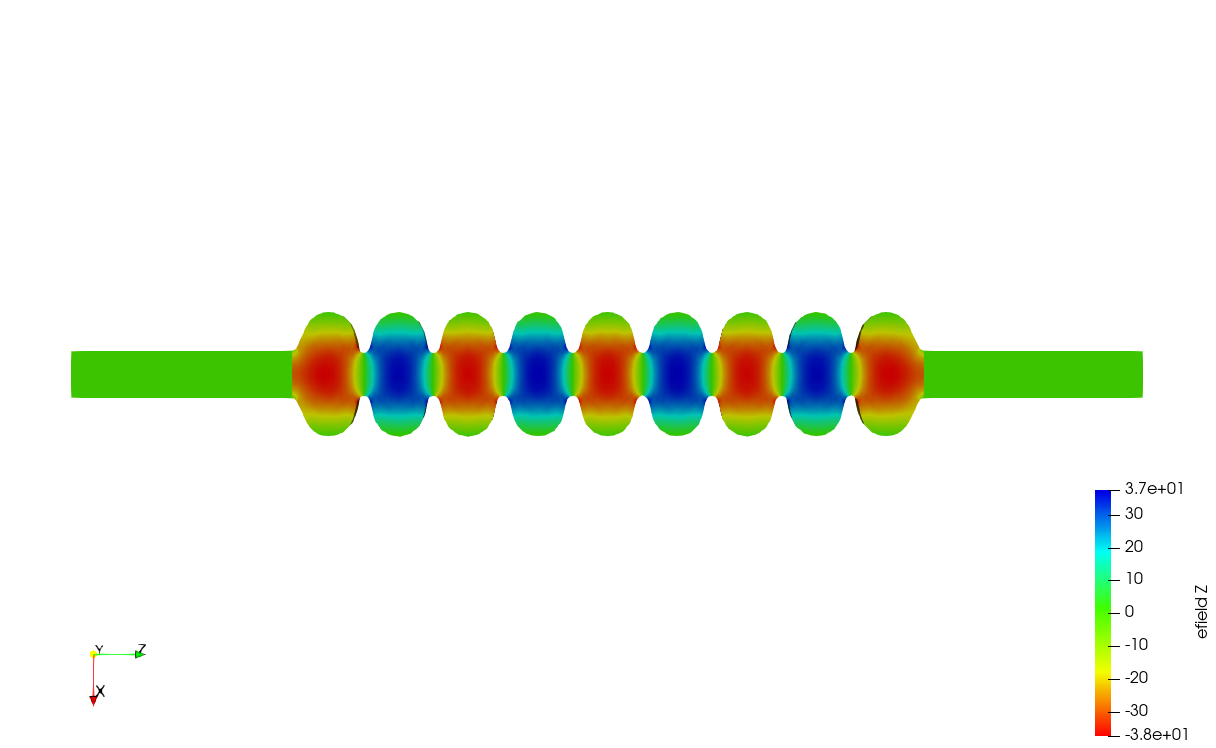}}
     \end{subfigure}~
    \begin{subfigure}[c]{\imgwidth}\centering
     {\includegraphics[trim = 286 310 286 310, clip,  width = \textwidth]{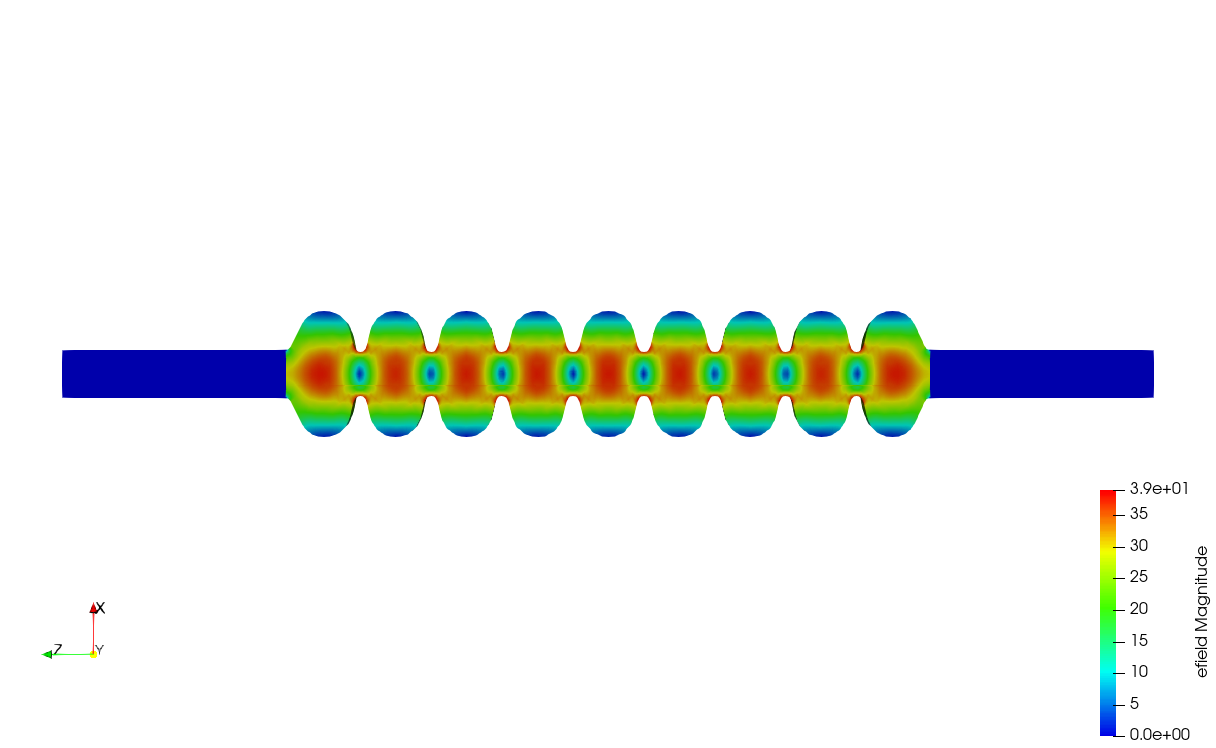}}
    \end{subfigure}\vspace{0.1em}
    
    \begin{subfigure}[c]{\imgwidth}\centering
     {\includegraphics[trim = 292 310 292 310, clip,  width = \textwidth]{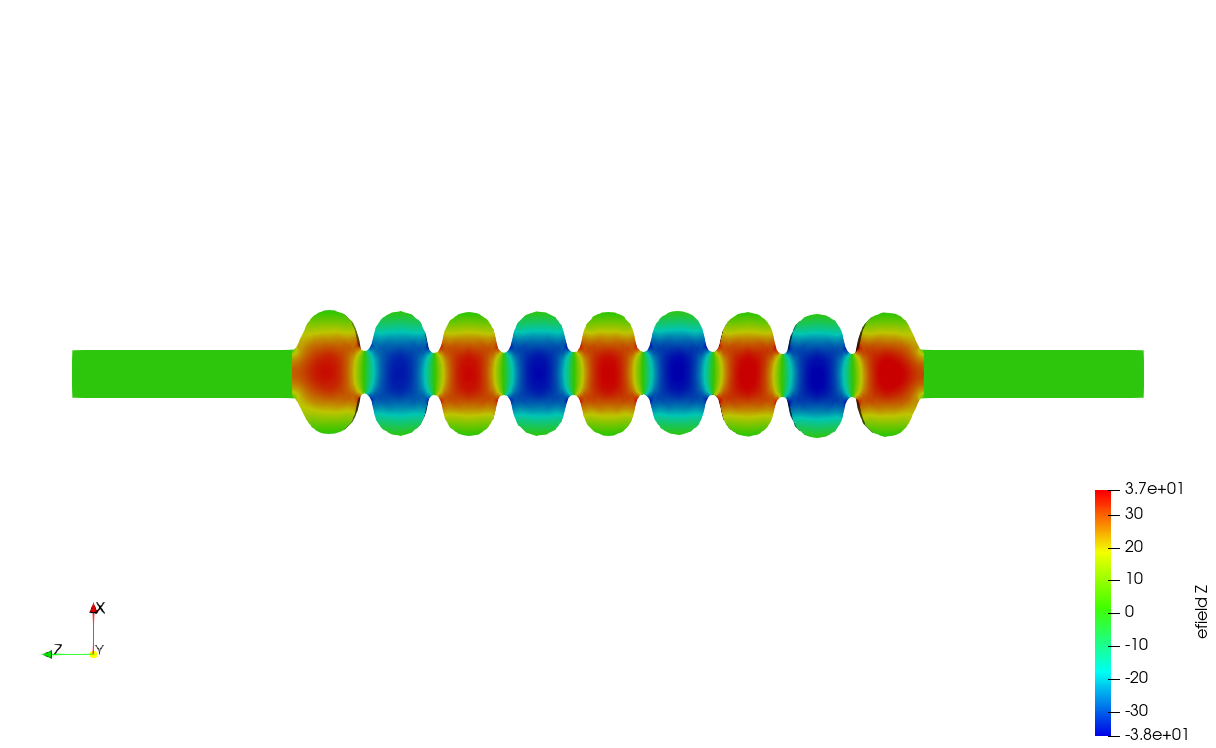}}
     \end{subfigure}~
    \begin{subfigure}[c]{\imgwidth}\centering
     {\includegraphics[trim = 286 310 286 310, clip,  width = \textwidth]{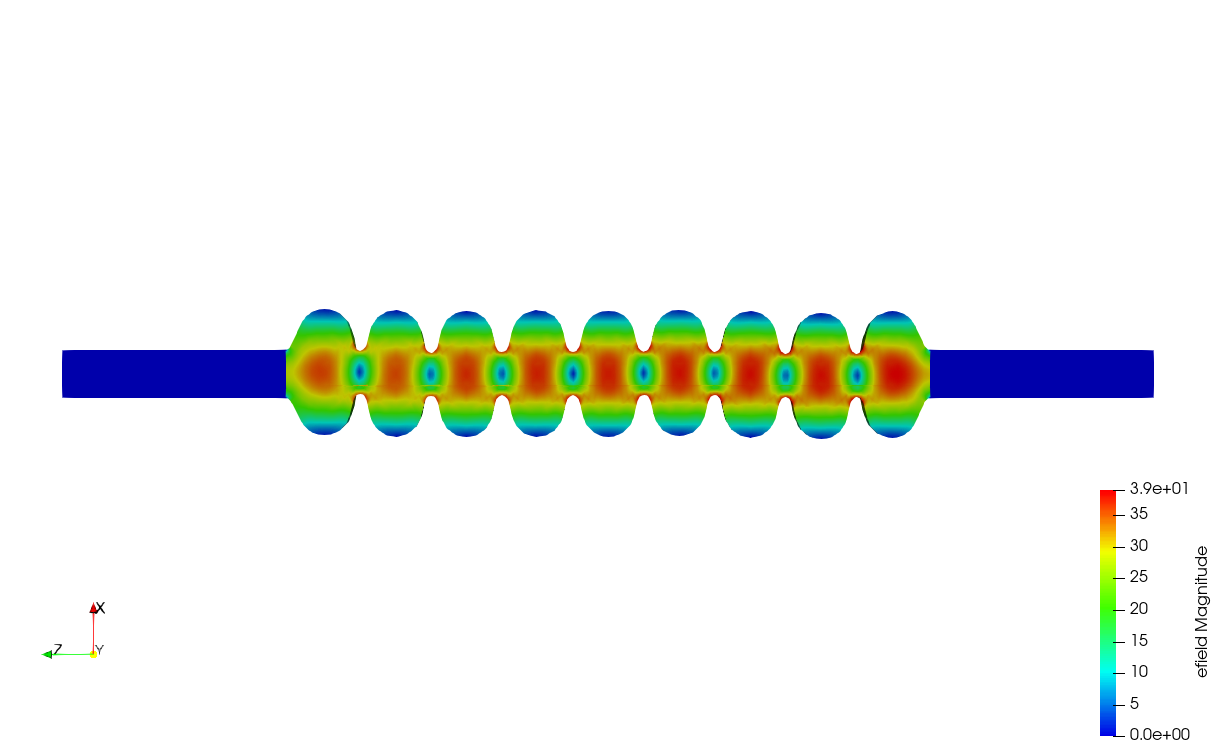}}
    \end{subfigure}
    
    \begin{subfigure}[c]{\imgwidth}\centering
     {\includegraphics[trim = 292 300 292 300, clip,  width = \textwidth]{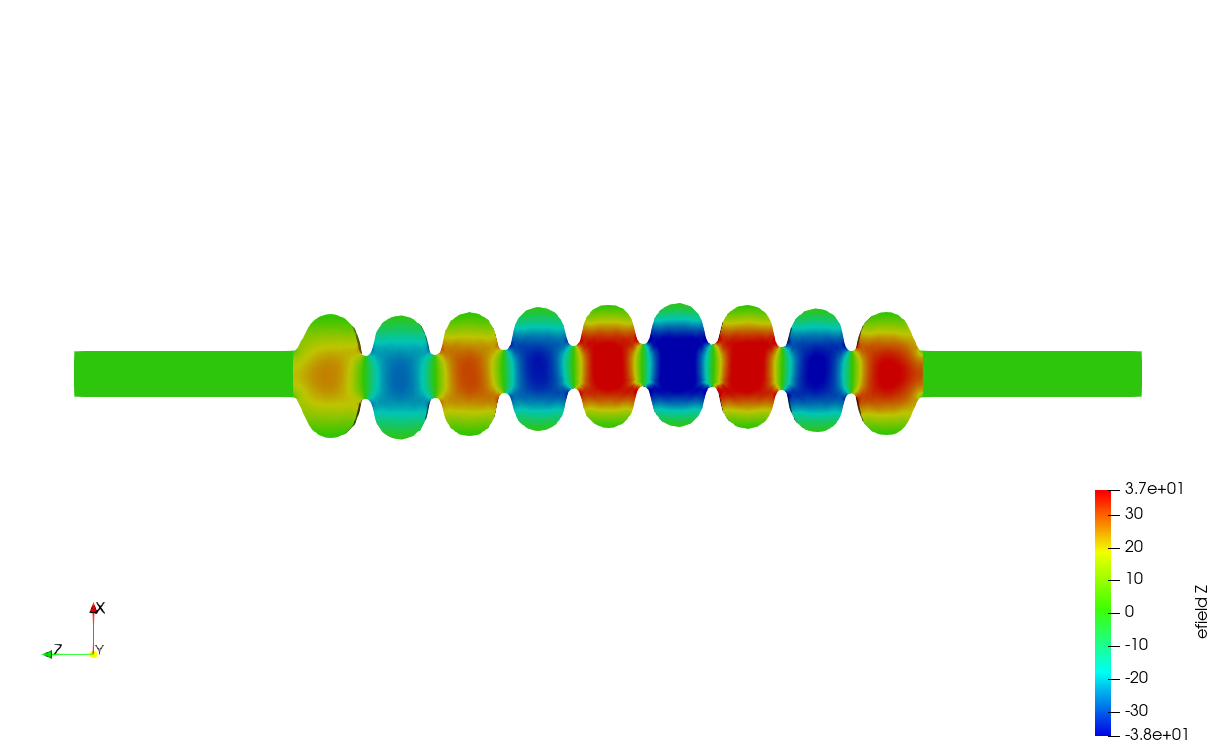}}
     \caption{$\vec{E}_z$, normalized to the $[-1,1]$-range}
     \end{subfigure}~
    \begin{subfigure}[c]{\imgwidth}\centering
     {\includegraphics[trim = 286 300 286 300, clip,  width = \textwidth]{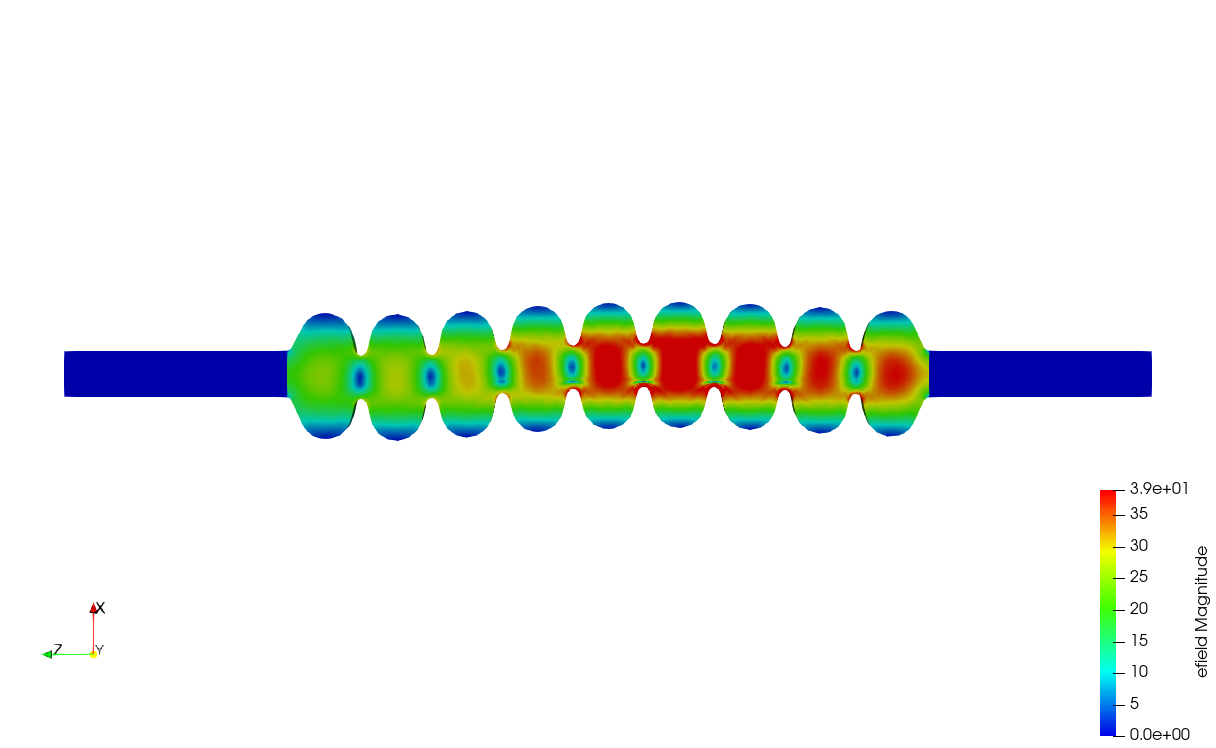}}
    \caption{$|\vec{E}|$, normalized to the $[0,1]$-range}
    \end{subfigure}\vspace{0.1em}
    
    \begin{tikzpicture}[overlay, xshift = 0.4\textwidth, yshift = .9\textwidth]
     \def\x{0.15}
     \def\h{-1.52}
     \def\d{1.486}
     \node[anchor=south] at (0,-1.65cm) {\includegraphics [width=0.4cm, height = 1.5cm] {fig/colorbar}};
     \foreach \y in {0,0.2,0.4,0.6,0.8,1} \draw (\x,\h+\d*\y)--(\x+0.1,\h+\d*\y) node [anchor=west] {\footnotesize $\y$};
     \node[] at (0.5,0.35) {\footnotesize $|\vec{E}|$};%
     \foreach \y in {0,0.25,0.5,0.75,1} \pgfmathsetmacro\z{(\y*2-1)} \draw (-\x,\h+\d*\y)--(-\x-0.1,\h+\d*\y) node [anchor=east] {\footnotesize $\z$};
     \node[] at (-0.5,0.35) {\footnotesize $\vec{E}_i$};%
\end{tikzpicture}

    \begin{tikzpicture}[overlay, xshift = -0.5\textwidth, yshift = 0.554\textheight]
     \draw[red, very thick] (0.067\textwidth, -0.9) -- (0.067\textwidth, -2.3);
     \draw[red, very thick] (0.5625\textwidth, -0.9) -- (0.5625\textwidth, -2.3);
    \end{tikzpicture}

    \caption{Normalized components and magnitude of the $\vec{E}$-field of the ninth mode (i.e. the accelerating mode). 
    At the top we see the cross-sections in the $z$-plane at the cutting planes marked in red. 
    In each subfigure, the field plot of the nominal design (top), the mean geometry $\domain_0$ (middle; cf. \cref{fig:xy_deformations}, subplot (a)) and a sampled deformed cavity $\domain_\vt$ (bottom).}
    \label{fig:modeE9_componentDef}
\end{figure}

\subsection{Computational Wall Clock Time}

For the perturbation approach and sampling for the PCEs, we initialize the computations by assembling the system matrices of the unperturbed cavity and solving the corresponding eigenvalue problem.
Next, when applying our novel approach, we compute the derivatives of system matrices with respect to the deformation using the software published in~\cite{Ziegler_github} for each of the seven deformation modes. 
This step can be fully parallelized and then takes \SI{2}{\min}, averaged over $10$ runs. 
Then, for each deformation mode, we assemble and solve system~\eqref{eq:der_eigpair:disc} and, given the derivatives~$\left[\domain_\vt \vec{E}_{\vt}\right]_i$ and $\left[\domain_\vt \vec{\lambda}_{\vt}\right]_i$ for deformation modes $i = 1, \ldots, M$, compute the covariance~\eqref{eq:KLE:tensor_lu}.
Since again, the contribution of each deformation mode can be computed independently, this task can be fully parallelized and is completed after \SI{25}{\s}, on average.

In contrast, for building the PCE surrogate model, we loop over $17$ iterations.
In each, we evaluate $1,000$ samples, i.e. we assemble the system matrices at $1,000$ Latin hypercube samples, solve the eigenvalue problem to find the $27$ eigenpairs described above.
The samples are computed in a \texttt{parfor} loop.
When distributing the task among $32$ workers, the computations per iteration take on average \SI{17}{\min}. 
After calculating the samples, we use UQLab on a single core with the above settings to estimate the weights of PCEs for each entry of the sampled vectors containing eigenvalue and eigenfunction data, which takes around \SI{30}{\min} per eigenpair and iteration.
Therefore, the derivative-based uncertainty quantification, where the full computation over all $27$ eigenpairs is completed in under \SI{3}{\min}, is dramatically more efficient.
The precise timing values are however subject to the prototype character of the code and are therefore to be understood rather as points of reference.

\section{Conclusion} \label{sec:conclusion}
We considered the uncertainty quantification of the Maxwell eigenvalue problem on random domains by means of a perturbation approach. The considered eigenvalues can be of higher but finite multiplicity and can thus also deal with crossings or bifurcations in the eigenvalue trajectories. To this end, we used the domain mapping approach to convert the eigenvalue problem on random domains into an eigenvalue problem with uncertain coefficients. The uncertainty in this eigenvalue problem was quantified using an extension of the approach of \cite{DE2024}, for which the derivatives of the domain transformation coefficients were required. These derivatives were taken from \cite{ziegler_mode_2022}. Using isogeometric analysis for the representation of the geometry allows for a straightforward modelling of the deformations and computation of the required derivatives.
We applied our theory for the uncertainty quantification of a three-dimensional 9-cell TESLA cavity with a random deformation model obtained from real-world data from the DESY database.

\bibliographystyle{elsarticle-num}

\end{document}